\renewcommand\@biblabel[1]{#1.}
\newcommand{\norm}[1]{\left\Vert#1\right\Vert}
\newcommand{\seq}[1]{\left<#1\right>}
\newcommand{\abs}[1]{\left\vert#1\right\vert}
\newcommand{\R}{{\rm I}\!{\rm R}} 
\newcommand{\N}{{\rm I}\!{\rm N}} 
\newcommand{\napprox}{\approx\hskip-.37cm /}
\newcommand{\et}{\mathcal{E}_{\{R_{\alpha}\}}^{\text{\rm{tot}}}}
 \newtheorem{thm}{Theorem}[section]
 \newtheorem{cor}[thm]{Corollary}
 \newtheorem{lem}[thm]{Lemma}
 \newtheorem{defn}[thm]{Definition}
 \newtheorem{rem}[thm]{Remark}
\title{Global Saturation of Regularization Methods for Inverse Ill-Posed Problems \thanks{This work was
supported in part by Consejo Nacional de Investigaciones
Cient\'{\i}ficas y T\'{e}cnicas, CONICET, through PIP 2010-2012 Nro. 0219,
by Universidad Nacional del Litoral, U.N.L., through project CAI+D
2009-PI-62-315, and by the Air Force Office of Scientific
Research, AFOSR, through Grant FA9550-10-1-0018.}}
\author{Terry Herdman\thanks{Interdisciplinary Center for Applied Mathematics, ICAM, Virginia
Tech, Blacksburg, VA 24061, USA ({\tt terry.herdman@vt.edu}).}
\and Ruben D. Spies$^\text{\Envelope,\,}$\thanks{Instituto de
Matem\'{a}tica Aplicada del Litoral, IMAL, CONICET-UNL, G\"{u}emes 3450,
S3000GLN, Santa Fe, Argentina and Departamento de Matem\'{a}tica,
Facultad de Ingenier\'{\i}a Qu\'{\i}mica, Universidad Nacional del Litoral,
Santa Fe, Argentina (\Envelope\,: {\tt
rspies@santafe-conicet.gov.ar}).} \and Karina G. Temperini
\thanks{Instituto de Matem\'{a}tica Aplicada del Litoral, IMAL,
CONICET-UNL, G\"{u}emes 3450, S3000GLN, Santa Fe, Argentina, and
Departamento de Matem\'{a}tica, Facultad de Humanidades y Ciencias,
Universidad Nacional del Litoral, Santa Fe, Argentina ({\tt
ktemperini@santafe-conicet.gov.ar}).}}
\begin{document}
\maketitle
%
%

\begin{abstract}
In this article the concept of saturation of an arbitrary
regularization method is formalized based upon the original idea
of saturation for spectral regularization methods introduced by
Neubauer \cite{Neubauer94}. Necessary and sufficient conditions
for a regularization method to have global saturation are
provided. It is shown that for a method to have global saturation
the total error must be optimal in two senses, namely as optimal
order of convergence over a certain set which at the same time,
must be optimal (in a very precise sense) with respect to the
error. Finally, two converse results are proved and the theory is
applied to find sufficient conditions which ensure the existence
of global saturation for spectral methods with classical
qualification of finite positive order and for methods with
maximal qualification. Finally, several examples of regularization
methods possessing global saturation are shown.
\end{abstract}

\begin{keywords}
Ill-posed, inverse problem, qualification, saturation.
\end{keywords}

\begin{AMS}
47A52, 65J20
\end{AMS}

\pagestyle{myheadings} \thispagestyle{plain} \markboth{T. HERDMAN,
R. D. SPIES and K. G. TEMPERINI}{GLOBAL SATURATION OF
REGULARIZATION METHODS}

\section{Introduction}
\label{intro} Let $X, Y$ be infinite dimensional Hilbert spaces
and $T:X\rightarrow Y$ a bounded linear operator such that
$\mathcal{R}(T)$ is not closed. It is well known that under these
conditions, the linear operator equation
\begin{equation}\label{eq:0}
    Tx=y
\end{equation}
is ill-posed, in the sense that $T^\dag$, the Moore-Penrose
generalized inverse of $T$, is not bounded \cite{bookEHN}. The
Moore-Penrose generalized inverse is strongly related to the least
squares solutions of (\ref{eq:0}). In fact this equation has a
least squares solution if and only if $y \in
\mathcal{D}(T^\dag)\doteq \mathcal{R}(T)\oplus
\mathcal{R}(T)^\perp$. In that case, $x^\dag\doteq T^\dag y$ is
the least squares solution of minimum norm and the set of all
least-squares solutions of (\ref{eq:0}) is given by $x^\dag +
\mathcal{N}(T)$. If the problem is ill-posed then $x^\dag$ does
not depend continuously on the data $y$. Therefore, if instead of
the exact data $y$, a noisy observation $y^\delta$ is available,
with $\norm{y-y^\delta}\leq \delta$, where $\delta>0$ is small,
then it is possible that $T^\dag y^\delta$ does not even exist and
if it does, it will not necessarily be a good approximation of
$x^\dag$. This instability becomes evident when trying to
approximate $x^\dag$ by traditional numerical methods and
procedures. Thus, for instance, it is possible that the
application of the standard least squares approximating procedure
on an increasing sequence of finite-dimensional subspaces
$\{X_n\}$ of $X$ whose union is dense in  $X$, result in a
sequence $\{x_n\}$ of least squares solutions that does not
converge to $x^\dag$ (see \cite{ref:Seidman-80}) or, even worst,
that they diverge from $x^\dag$ with speed arbitrarily large (see
\cite{Spies-Temperini-2006}).

Ill-posed problems must be first regularized if one wants to
successfully attack the task of numerically approximating their
solutions. Regularizing an ill-posed problem such as (\ref{eq:0})
essentially means approximating the operator $T^\dag$ by a
parametric family of bounded operators $\{R_\alpha\}$, where
$\alpha$ is a regularization parameter.  If $y \in
\mathcal{D}(T^\dag)$, then the best approximate solution $x^\dag$
of (\ref{eq:0}) can be written as $x^\dag=\int_0^{\norm{T}^2+}
\frac{1}{\lambda} \, dE_\lambda T^\ast y$ where $\{E_\lambda\}$ is
the spectral family associated to the operator $T^\ast T$ (see
\cite{bookEHN}). This is mainly why many regularization methods
are based on spectral theory and consist in defining
$R_\alpha\doteq \int_0^{\norm{T}^2+} g_\alpha(\lambda) \,
dE_\lambda T^\ast$ where $\{g_\alpha\}$ is a family of functions
appropriately chosen such that for every $\lambda \in
(0,\norm{T}^2]$ there holds $\underset{\alpha \rightarrow
0^+}{\lim}g_\alpha(\lambda)=\frac{1}{\lambda}$.

However, it is important to emphasize that no mathematical trick
can make stable a problem that is intrinsically unstable. In any
case there is always loss of information. All a regularization
method can do is to recover the largest possible amount of
information about the solution of the problem, maintaining
stability. It is often said that the art of applying
regularization methods consist always in maintaining an adequate
balance between accuracy and stability. In 1994, however, Neubauer
(\cite{Neubauer94}) showed that certain spectral regularization
methods ``{\it saturate}'', that is, they become unable to
continue extracting additional information about the exact
solution even upon increasing regularity assumptions on it. In his
article, Neubauer introduced for the first time the idea of the
concept of ``{\it saturation}'' of regularization methods. This
idea referred to the best order of convergence that a method can
achieve independently of the smoothness assumptions on the exact
solution and on the selection of the parameter choice rule. Later
on, in 1997, Neubauer (\cite{ref:Neubauer-97}) showed that this
saturation phenomenon occurs in particular in the classical
Tikhonov-Phillips method. Saturation is however a rather subtle
and complex issue in the study of regularization methods for
inverse ill-posed problems and the concept has always escaped
rigorous formalization in a general context.

In 2001, Math\'{e} and Pereverzev (\cite{ref:Mathe-Pereverzev-2001})
used Hilbert scales to study the efficiency of approximating
solutions based on observations with noise (stochastic or
deterministic). In this context it is possible to quantify the
degree of ill-posedness and to obtain general conditions on
projection methods so that they attain optimal order of
convergence. These concepts were later extended by the same
authors (\cite{ref:Mathe-Pereverzev-2003}) who studied the optimal
convergence problem in variable Hilbert scales. In their article
they showed that there is a close relationship between the optimal
convergence of a method and the \textit{``a-priori''} regularity
(in terms of source sets) for spectral methods possessing
qualification of finite order. In 2009 Herdman et al.
(\cite{ref:Herdman-Spies-Temperini-2009}) introduced an extension
of the concept of qualification and introduced three different
levels: weak, strong and optimal. It was shown that weak
qualification extends the definition introduced by Math\'{e} and
Pereverzev (\cite{ref:Mathe-Pereverzev-2003}), in the sense that
the functions associated to orders of convergence and source sets
need not be the same.

In 2004, Math\'{e} (\cite{Mathe2004}) proposed general definitions
of the concepts of qualification and saturation for spectral
regularization methods. However, the concept of saturation defined
by Math\'{e} is not applicable to general regularization methods and
it is not fully compatible with the original idea of saturation
proposed by Neubauer in \cite{Neubauer94}. In particular, for
instance, the definition of saturation given in \cite{Mathe2004}
does not imply uniqueness and therefore, neither a best global order
of convergence.

In this article a general theory of global saturation for arbitrary
regularization methods is developed. It is shown that saturation
involves two aspects: on one hand (just like in Neubauer's original
idea) the characterization of the best global order of convergence
of the method, and on the other hand, the description of the source
set on which such a best global order of convergence is achieved.
Also, necessary and sufficient conditions are found for a
regularization method to have global saturation. In particular, it
is shown that for a method to have saturation, it is necessary that
the total error be optimal in two senses, namely as optimal order of
convergence over a certain set which at the same time, must satisfy
a certain optimality condition with respect to the error. Moreover,
an explicit form for the global saturation is given in terms of the
family of regularization operators and the operator associated to
the problem. Lastly, sufficient conditions are provided for spectral
methods with qualification of positive finite order and for spectral
methods with maximal qualification to have global saturation.

The organization of the paper is as follows. In Section 2
convergence bounds for regularization methods are defined and an
appropriate framework for their comparison is developed. In
Section 3 the concept of global saturation is introduced, its
relation with the total error and with convergence bounds is shown
and necessary and sufficient conditions for the existence of
global saturation are provided. In Section 4, a few converse
results are proved which, together with the results of Section 3,
are used to derive sufficient conditions for the existence of
global saturation for certain spectral regularization methods.

\section{Upper Bounds of Convergence for Regularization Methods}
\label{sec:1} In this section we define what we call {\it upper
bounds of convergence} for regularization methods and we develop
ways of comparing them on the same as well as on different sets.
Although this section may seem a little lengthy and tedious at a
first glance, it provides a solid mathematical background on which
all subsequent formalization and definitions are based upon.

In sequel and for convenience of notation, unless otherwise
specified, we shall assume that all subsets of the Hilbert space
$X$ under consideration are not empty and they do not contain
$x=0$. Also, without loss of generality we will assume that the
operator $T$ is invertible (since in the context of inverse
problems one always works with the Moore-Penrose generalized
inverse of $T$, the lack of injectivity is not really a problem).
Given $M\subset X$, we will denote with $\mathcal{F}_{M}$ the
collection of the following functions: we will say that $\psi \in
\mathcal{F}_{M}$ if there exists $a=a(\psi)>0$ such that $\psi$ is
defined in $M \times (0,a)$, with values in $(0,\infty)$ and it
satisfies the following conditions:
\begin{enumerate}
    \item $\underset{\delta\rightarrow 0^+}{\lim}\psi(x,\delta)=0$
    for all
$x \in
    M$, and
    \item  $\psi$ is continuous and increasing as a function of $\delta$
    in $(0,a)$ for each fixed $x\in M$.
\end{enumerate}
Roughly speaking, the collection $\mathcal{F}_{M}$ contains all
possible $\delta$-``{\it orders of convergence}" on the set $M$.

\begin{defn}\label{def:relac-fm}
Let $M\subset X$ and $\psi,\tilde{\psi} \in \mathcal{F}_{M}$.

i) We say that ``$\psi$ precedes $\tilde{\psi}$ on $M$'', and we
denote it $\psi \overset{M}{\preceq} \tilde{\psi}$, if there exist
a constant $r>0$ and $p:M\rightarrow (0,\infty)$ such that
$\psi(x,\delta) \leq p(x) \tilde{\psi}(x,\delta)$ for all $x \in
M$ and for every $\delta \in (0,r)$.

ii) We say that ``$\psi,\tilde{\psi}$ are \textit{equivalent on
$M$}'', and we denote it $\psi \overset{M}{\approx} \tilde{\psi}$,
if $\psi \overset{M}{\preceq} \tilde{\psi}$ and
$\tilde{\psi}\overset{M}{\preceq} \psi$.

iii) We say that ``$\psi$ strictly precedes  $\tilde{\psi}$ on
$M$'' and we denote it  $\psi\overset{M}{\prec}\tilde{\psi}$ if
$\psi \overset{M}{\preceq} \tilde{\psi}$ and $\underset{\delta
\rightarrow0^+}{\limsup}\,\frac{\psi(x,\delta)}{\tilde{\psi}(x,\delta)}=0$
for every $x \in M.$ \end{defn}

The following observations follow immediately from these
definitions.

$\bullet$ Given that $\psi,\tilde{\psi}>0$, in \textit{iii)} the
condition $\underset{\delta
\rightarrow0^+}{\limsup}\,\frac{\psi(x,\delta)}{\tilde{\psi}(x,\delta)}=0$
is equivalent to $\underset{\delta
\rightarrow0^+}{\lim}\,\frac{\psi(x,\delta)}{\tilde{\psi}(x,\delta)}=0$,
i.e., $\psi(x,\delta)=o(\tilde{\psi}(x,\delta))$ for $\delta
\rightarrow0^+$.

$\bullet$ The relation ``$\overset{M}{\preceq}$'' introduces a
partial ordering in $\mathcal{F}_M$ and ``$\overset{M}{\approx}$''
is an equivalence relation in $\mathcal{F}_{M}$.

$\bullet$ If
$\psi\overset{M}{\preceq}(\overset{M}{\approx},\overset{M}{\prec})\,\tilde{\psi}$
then
$\psi\overset{\tilde{M}}{\preceq}(\overset{\tilde{M}}{\approx},\overset{\tilde{M}}{\prec})\,\tilde{\psi}$
for every $\tilde{M}\subset M$.
With $\npreceq$, $\nprec$ and $\napprox$ we will denote the
negation of the relations $\preceq$, $\prec$ and $\approx$,
respectively.

\begin{lem}\label{lem:menor estricto}
Let $M\subset X$ and $\psi,\tilde{\psi}\in \mathcal{F}_M$. If
$\psi\overset{M}{\prec}\tilde{\psi}$ then
$\tilde{\psi}\overset{\tilde{M}}{\npreceq}\psi$ for every
$\tilde{M}\subset M$.
\end{lem}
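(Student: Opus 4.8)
The plan is to prove the contrapositive-flavoured statement directly. Suppose, for contradiction, that there exists some $\tilde M \subset M$ with $\tilde\psi \overset{\tilde M}{\preceq} \psi$. Then by definition there are a constant $r_1>0$ and a function $q:\tilde M \rightarrow (0,\infty)$ with $\tilde\psi(x,\delta) \leq q(x)\,\psi(x,\delta)$ for all $x\in\tilde M$ and all $\delta\in(0,r_1)$. On the other hand, from $\psi \overset{M}{\prec} \tilde\psi$ and the third bullet observation (restriction to subsets), we have in particular $\psi \overset{\tilde M}{\prec} \tilde\psi$, which by Definition~\ref{def:relac-fm}(iii) gives $\lim_{\delta\rightarrow 0^+} \frac{\psi(x,\delta)}{\tilde\psi(x,\delta)} = 0$ for every $x\in\tilde M$.

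Next I would fix any $x\in\tilde M$ (recall $\tilde M$ is assumed nonempty throughout the paper) and combine the two facts. For $\delta\in(0,r_1)$ we have $\tilde\psi(x,\delta) \leq q(x)\,\psi(x,\delta)$, and since $\psi,\tilde\psi$ take values in $(0,\infty)$ we may divide to obtain $\frac{\tilde\psi(x,\delta)}{\psi(x,\delta)} \leq q(x)$ for all such $\delta$; equivalently $\frac{\psi(x,\delta)}{\tilde\psi(x,\delta)} \geq \frac{1}{q(x)} > 0$. Letting $\delta\rightarrow 0^+$ in this lower bound contradicts $\lim_{\delta\rightarrow 0^+} \frac{\psi(x,\delta)}{\tilde\psi(x,\delta)} = 0$. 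This contradiction shows no such $\tilde M$ can exist, i.e. $\tilde\psi \overset{\tilde M}{\npreceq} \psi$ for every $\tilde M\subset M$, which is the claim.

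This argument is essentially a one-line observation once the definitions are unwound, so I do not anticipate a genuine obstacle; the only point requiring a little care is making sure the two conditions being combined refer to the same set — the preceding third bullet observation (that $\overset{M}{\prec}$ descends to any subset $\tilde M\subset M$) is exactly what licenses restricting the strict-precedence hypothesis to $\tilde M$ before comparing with the assumed $\tilde\psi\overset{\tilde M}{\preceq}\psi$. One should also note that the nonemptiness convention on subsets of $X$, adopted at the start of Section~\ref{sec:1}, is implicitly used so that "for every $x\in\tilde M$" is not vacuous; otherwise the statement would be trivially false for $\tilde M=\emptyset$.
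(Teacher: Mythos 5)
Your proof is correct and follows essentially the same route as the paper's: assume $\tilde\psi\overset{\tilde M}{\preceq}\psi$ for some $\tilde M\subset M$, fix a point of $\tilde M$, and observe that the resulting lower bound $\psi(x,\delta)/\tilde\psi(x,\delta)\geq 1/q(x)>0$ on a $\delta$-interval is incompatible with the vanishing of that ratio required by $\psi\overset{M}{\prec}\tilde\psi$. The paper phrases this as a contrapositive and spells out the limsup/liminf/inf chain, but the underlying argument is identical.
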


\begin{proof}
For the contrareciprocal. Suppose there exists $\tilde{M} \subset
M$ such that $\tilde{\psi}\overset{\tilde{M}}{\preceq}\psi$. Let
$x_0 \in \tilde{M}$, then
$\tilde{\psi}\overset{\{x_0\}}{\preceq}\psi$, that is, there exist
constants $0<p<\infty$ and $r>0$ such that $\underset{\delta \,\in
(0,r)}{\sup}\frac{\tilde{\psi}(x_0,\delta)}{\psi(x_0,\delta)}\leq
p<\infty$. Then,
\begin{eqnarray*}
\underset{\delta\rightarrow0^+}{\limsup}\frac{\psi(x_0,\delta)}{\tilde{\psi}(x_0,\delta)}&\geq
&\underset{\delta\rightarrow0^+}{\liminf}\frac{\psi(x_0,\delta)}{\tilde{\psi}(x_0,\delta)}\geq
\underset{\delta\in
(0,r)}{\inf}\frac{\psi(x_0,\delta)}{\tilde{\psi}(x_0,\delta)} \\
   &=&\left(\underset{\delta\in
(0,r)}{\sup}\frac{\tilde{\psi}(x_0,\delta)}{\psi(x_0,\delta)}\right)^{-1}\geq\frac{1}{p}>0.
\end{eqnarray*}
Therefore, $\psi\overset{\{x_0\}}{\nprec}\tilde{\psi}$, from which
we deduce that $\psi\overset{M}{\nprec}\tilde{\psi}$, since
$x_0\in\tilde{M}\subset M$. \hfill
\end{proof}
\smallskip
\begin{defn}
Let $\{R_{\alpha}\}_{\alpha\in (0,\alpha_0)}$ be a family of
regularization operators for the problem $Tx=y$. We define the
``total error of $\{R_{\alpha}\}_{\alpha\in (0,\alpha_0)}$ at $x
\in X$ for a noise level $\delta$'' as
$$\et(x,\delta)\doteq
\underset{\alpha \in (0,\alpha_0)}{\inf}\;\underset{y^\delta \in
\overline{B_\delta(Tx)}}{\sup} \norm{R_\alpha y^\delta-x},$$ where
$\overline{B_\delta(Tx)}\doteq\{y \in Y: \norm{Tx-y}\leq
\delta\}$.
\end{defn}

Note that $\et$ is the error in the sense of the largest possible
discrepancy that can be obtained for an observation within the
noise level $\delta$, with any choice of the regularization
parameter $\alpha$.

\begin{rem}\label{rem:1}
Let $a>0$, $M\subset X$ and $\et:M\times (0,a)\rightarrow
(0,\infty)$ be the total error of $\{R_\alpha\}$. Then $\et \in
\mathcal{F}_{M}$. In fact, for each $x \in M$, $\et(x,\delta)$ is
increasing as a function of $\delta$, and given that
$\{R_\alpha\}$ is a family of regularization operators, it follows
that $\et(x,\delta)$ is continuous as a function of $\delta$ for
each fixed $x \in M$ and $\underset{\delta\rightarrow
0^+}{\lim}\et(x,\delta)=0$ for every $x \in M.$
\end{rem}

\begin{defn}\label{def-sat}
Let $\{R_{\alpha}\}_{\alpha \in (0,\alpha_0)}$ be a family of
regularization operators for the problem $Tx=y$, $M\subset X$ and
$\psi \in \mathcal{F}_M$.

i) We say that $\psi$ is an ``upper bound of convergence for the
total error of $\{R_\alpha\}_{\alpha \in (0,\alpha_0)}$ on $M$''
if $\et\overset{M}{\preceq}\psi$.

ii) We say that $\psi$ is a ``strict upper bound of convergence
for the total error of $\{R_\alpha\}_{\alpha \in (0,\alpha_0)}$ on
$M$'' if $\et\overset{M}{\prec}\psi$.

iii) We say that $\psi$ is an ``optimal upper bound of convergence
for the total error of $\{R_\alpha\}_{\alpha \in (0,\alpha_0)}$ on
$M$'' if $\et\overset{M}{\preceq}\psi$ and
\begin{equation*}
\underset{\delta
\rightarrow0^+}{\limsup}\,\frac{\et(x,\delta)}{\psi(x,\delta)}>0
\quad  \textrm{for every} \quad x \in M,
\end{equation*}
or equivalently, if for every $x\in M$ $\et(x,\delta)\neq
o(\psi(x,\delta))$ when $\delta \rightarrow 0^+$.
\end{defn}

We will denote with $\mathcal{U}_M(\et)$,
$\mathcal{U}_M^{\textrm{\,str}}(\et)$ and
$\mathcal{U}_M^{\textrm{\,opt}}(\et)$ the set of all functions $\psi
\in \mathcal{F}_M$ that are, respectively, upper bounds, strict
upper bounds and optimal upper bounds of convergence for the total
error of $\{R_\alpha\}_{\alpha \in (0,\alpha_0)}$ on $M$. In view of
Remark \ref{rem:1}, it is clear that $\et\in
\mathcal{U}_M^{\,\textrm{opt}}(\et)$ for every $M \subset X$.

The observations below follow immediately from the previous
definitions.

$\bullet$ If $\psi \in \mathcal{F}_M$, then $\psi \in
\mathcal{U}_M(\et)$ if (and only if)
$\et(x,\delta)=O(\psi(x,\delta))$ as $\delta \rightarrow 0^+$ for
every $x \in M$. Moreover, $\mathcal{U}_M^{\,\textrm{\,str}}(\et)$
and $\mathcal{U}_M^{\,\textrm{opt}}(\et)$ are disjoint subsets of
$\mathcal{U}_M(\et)$, although their union is not all of
$\mathcal{U}_M(\et)$ (except when $M$ consists of just one element).

$\bullet$ If $\tilde{M}\subset M$, then $\mathcal{U}_{M}(\et)\subset
\mathcal{U}_{\tilde{M}}(\et)$,
$\mathcal{U}_M^{\,\textrm{opt}}(\et)\subset
\mathcal{U}_{\tilde{M}}^{\,\textrm{opt}}(\et)$ and
$\mathcal{U}_M^{\,\textrm{\,str}}(\et)\subset
\mathcal{U}_{\tilde{M}}^{\,\textrm{\,str}}(\et)$.

$\bullet$ If $\psi \in \mathcal{U}_M(\et)$, $\tilde{\psi}\in
\mathcal{F}_M$ and $\psi \overset{M}{\preceq}\tilde{\psi}$, then
$\tilde{\psi} \in \mathcal{U}_M(\et).$

$\bullet$ If $\psi \in \mathcal{U}_M^{\,\textrm{opt}}(\et)$,
$\tilde{\psi}\in \mathcal{U}_M(\et) $ and $\tilde{\psi}
\overset{M}{\preceq}\psi$, then $\tilde{\psi} \in
\mathcal{U}_M^{\,\textrm{opt}}(\et).$

$\bullet$ If $\psi \in \mathcal{U}_M^{\,\textrm{\,str}}(\et)$,
$\tilde{\psi}\in \mathcal{F}_M$ and $\psi
\overset{M}{\preceq}\tilde{\psi}$, then $\tilde{\psi} \in
\mathcal{U}_M^{\,\textrm{\,str}}(\et).$

\smallskip
\begin{defn}\label{def:comparable}
Let $\psi, \tilde{\psi} \in \mathcal{F}_M$. We say that ``$\psi$
and $\tilde{\psi}$ are comparable on $M$'' if they verify $\psi
\overset{M}{\preceq}\tilde{\psi}$ or $\tilde{\psi}
\overset{M}{\preceq}\psi$ (or both).
\end{defn}

\smallskip
\begin{defn}\label{def:minimal}
Let $\mathcal{A}\subset \mathcal{F}_M$ and $\psi^\ast
\in\mathcal{A}$. We say that ``$\psi^\ast$ is a minimal element of
$\left(\mathcal{A}, \overset{M}{\preceq}\right)$'' if $\psi^\ast
\overset{M}{\preceq}\psi$ for every $\psi \in \mathcal{A}$
comparable with $\psi^\ast$ on  $M$. Equivalently, $\psi^\ast$ is
minimal element of $\left(\mathcal{A},
\overset{M}{\preceq}\right)$ if for every $\psi \in \mathcal{A}$,
the condition $\psi\overset{M}{\preceq}\psi^\ast$ implies
$\psi^\ast \overset{M}{\preceq}\psi$.
\end{defn}

\smallskip
\begin{lem}\label{lem:nomin}
Let $\mathcal{A}\subset \mathcal{F}_M, \psi,\psi^\ast \in
\mathcal{A}$ and $\psi,\psi^\ast$ be comparable on $M$. If there
exists $M_0 \subset M$ such that $\psi
\overset{M_0}{\prec}\psi^\ast$ then $\psi^\ast$ is not a minimal
element of $\left(\mathcal{A},\overset{M}{\preceq}\right).$
\end{lem}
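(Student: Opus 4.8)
My plan is to prove the statement directly from the minimality criterion of Definition~\ref{def:minimal} by producing an explicit witness to its failure, and the natural candidate is $\psi$ itself. Recall that $\psi^\ast$ is a minimal element of $(\mathcal{A},\overset{M}{\preceq})$ exactly when, for every $\varphi\in\mathcal{A}$, the relation $\varphi\overset{M}{\preceq}\psi^\ast$ forces $\psi^\ast\overset{M}{\preceq}\varphi$. Hence it is enough to check two things about $\psi\in\mathcal{A}$: that $\psi\overset{M}{\preceq}\psi^\ast$, and that $\psi^\ast\overset{M}{\npreceq}\psi$.

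The core of the argument is the second assertion, and here Lemma~\ref{lem:menor estricto} does the work. From the hypothesis $\psi\overset{M_0}{\prec}\psi^\ast$, that lemma gives $\psi^\ast\overset{\tilde M}{\npreceq}\psi$ for every $\tilde M\subset M_0$; taking $\tilde M=M_0$ yields $\psi^\ast\overset{M_0}{\npreceq}\psi$. Since $M_0\subset M$, I would then invoke the third of the immediate observations following Definition~\ref{def:relac-fm} in contrapositive form: were $\psi^\ast\overset{M}{\preceq}\psi$ to hold, it would restrict to $\psi^\ast\overset{M_0}{\preceq}\psi$, a contradiction. Therefore $\psi^\ast\overset{M}{\npreceq}\psi$.

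For the first assertion, I would simply use comparability: by Definition~\ref{def:comparable} one of $\psi\overset{M}{\preceq}\psi^\ast$ or $\psi^\ast\overset{M}{\preceq}\psi$ must hold, and the second has just been ruled out, so $\psi\overset{M}{\preceq}\psi^\ast$. Combining the two assertions, $\psi$ is an element of $\mathcal{A}$ satisfying $\psi\overset{M}{\preceq}\psi^\ast$ but not $\psi^\ast\overset{M}{\preceq}\psi$, which is precisely a violation of the minimality condition in Definition~\ref{def:minimal}; hence $\psi^\ast$ is not minimal.

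I do not expect a real obstacle in this proof. The only point requiring care is the passage between the relation on $M$ and the relation on the subset $M_0$, but this is entirely routine: it is handled by the monotonicity of $\overset{M}{\preceq}$ under shrinking the underlying set together with Lemma~\ref{lem:menor estricto}, and no new estimates are introduced.
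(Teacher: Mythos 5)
Your proof is correct and follows essentially the same route as the paper: apply Lemma \ref{lem:menor estricto} to get $\psi^\ast \overset{M_0}{\npreceq}\psi$, lift this to $\psi^\ast \overset{M}{\npreceq}\psi$ by restriction, and conclude non-minimality from Definition \ref{def:minimal} using comparability. The only cosmetic difference is that you work with the second (equivalent) formulation of minimality, which makes you first extract $\psi\overset{M}{\preceq}\psi^\ast$ from comparability, whereas the paper invokes the first formulation directly; the substance is identical.
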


\begin{proof}
Let $\mathcal{A}\subset \mathcal{F}_M$ and $\psi,\psi^\ast \in
\mathcal{A}$ be comparable on $M$. Let us suppose that there
exists $M_0 \subset M$ such that $\psi
\overset{M_0}{\prec}\psi^\ast$, then it follows from Lemma
\ref{lem:menor estricto} that $\psi^\ast
\overset{M_0}{\npreceq}\psi$. Thus $\psi^\ast
\overset{M}{\npreceq}\psi$ and since $\psi,\psi^\ast \in
\mathcal{A}$ are comparable on $M$, it follows from Definition
\ref{def:minimal} that $\psi^\ast$ cannot be a minimal element of
$\left(\mathcal{A},\overset{M}{\preceq}\right).$ \hfill
\end{proof}

\smallskip
\begin{cor}\label{cor:nominimal}
If $\psi^\ast \in \mathcal{U}_M(\et)$ and there exist $x_0 \in M$
and $\psi_0 \in \mathcal{U}_{\{x_0\}}(\et)$ such that
$\psi_0\overset{\{x_0\}}{\prec}\psi^\ast$ then $\psi^\ast$ is not
a minimal element of
$\left(\mathcal{U}_M(\et),\overset{M}{\preceq}\right).$
\end{cor}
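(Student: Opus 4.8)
The plan is to reduce the statement to Lemma~\ref{lem:nomin} by manufacturing, out of $\psi_0$ and $\psi^\ast$, a single function $\psi\in\mathcal{U}_M(\et)$ that agrees with $\psi_0$ at $x_0$ and with $\psi^\ast$ elsewhere, and then to check that $\psi$ fulfils the hypotheses of that lemma with $\mathcal{A}=\mathcal{U}_M(\et)$ and $M_0=\{x_0\}$.

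First I would fix $a>0$ smaller than both $a(\psi_0)$ and $a(\psi^\ast)$ and define $\psi:M\times(0,a)\rightarrow(0,\infty)$ by $\psi(x_0,\delta)\doteq\psi_0(x_0,\delta)$ and $\psi(x,\delta)\doteq\psi^\ast(x,\delta)$ for $x\in M\setminus\{x_0\}$. Since $\psi_0\in\mathcal{F}_{\{x_0\}}$ and $\psi^\ast\in\mathcal{F}_M$, for each fixed $x$ the section $\psi(x,\cdot)$ is positive, continuous, increasing and vanishes as $\delta\rightarrow0^+$, so $\psi\in\mathcal{F}_M$.

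Next I would verify $\psi\in\mathcal{U}_M(\et)$, i.e. $\et\overset{M}{\preceq}\psi$. For $x\in M\setminus\{x_0\}$ this is just the hypothesis $\psi^\ast\in\mathcal{U}_M(\et)$ restricted to that set, while at $x_0$ it is the hypothesis $\psi_0\in\mathcal{U}_{\{x_0\}}(\et)$; combining the two bounds, choosing the corresponding multiplier $p$ piecewise and taking $r$ to be the smaller of the two associated constants yields $\et\overset{M}{\preceq}\psi$. In the same piecewise fashion, using that $\psi_0\overset{\{x_0\}}{\prec}\psi^\ast$ implies in particular $\psi_0\overset{\{x_0\}}{\preceq}\psi^\ast$, one obtains $\psi\overset{M}{\preceq}\psi^\ast$; hence $\psi$ and $\psi^\ast$ are comparable on $M$ in the sense of Definition~\ref{def:comparable}.

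Finally, taking $M_0=\{x_0\}$, the restriction of $\psi$ to $M_0$ coincides (as a function of $\delta$) with $\psi_0$, so $\psi\overset{M_0}{\prec}\psi^\ast$ is precisely the hypothesis $\psi_0\overset{\{x_0\}}{\prec}\psi^\ast$. Lemma~\ref{lem:nomin}, applied with $\mathcal{A}=\mathcal{U}_M(\et)$, with $\psi$, $\psi^\ast$ and $M_0$, then gives that $\psi^\ast$ is not a minimal element of $\left(\mathcal{U}_M(\et),\overset{M}{\preceq}\right)$, as claimed. I do not foresee a genuine obstacle here; the only point requiring a little care is the bookkeeping of the constants $a$, $r$ and of the multiplier function $p$ when patching $\psi_0$ and $\psi^\ast$ together, together with the routine verification that the patched function indeed belongs to $\mathcal{F}_M$.
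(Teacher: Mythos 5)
Your proposal is correct and coincides with the paper's own argument: the paper also applies Lemma \ref{lem:nomin} with $\mathcal{A}=\mathcal{U}_M(\et)$, $M_0=\{x_0\}$ and the same piecewise-defined $\psi$ (equal to $\psi_0$ at $x_0$ and to $\psi^\ast$ elsewhere), noting that $\psi\in\mathcal{U}_M(\et)$ and $\psi\overset{M}{\preceq}\psi^\ast$. Your extra care with the constants and the verification that $\psi\in\mathcal{F}_M$ is fine but not a point of divergence.
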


\begin{proof}
This corollary is an immediate consequence of the previous lemma
with $\mathcal{A}=\mathcal{U}_M(\et)$, $M_0=\{x_0\}$ and
\begin{equation*}
\psi(x,\delta)\doteq \left\{%
\begin{array}{ll}
    \psi_0(x_0,\delta), & \hbox{if $x=x_0$} \\
    \psi^\ast(x,\delta), & \hbox{if $x \neq x_0$.} \\
\end{array}%
\right.
\end{equation*}
Note that this function $\psi$ so defined is in
$\mathcal{U}_M(\et)$ and it is comparable with $\psi^\ast$ on $M$
(moreover $\psi\overset{M}{\preceq}\psi^\ast$).\hfill
\end{proof}

Next we will show that the optimal upper bounds of convergence for
the total error of $\{R_\alpha\}_{\alpha \in (0,\alpha_0)}$ on $M$
are characterized by being minimal elements of the partially
ordered set $\left(\mathcal{U}_{M}(\et),\,
\overset{M}{\preceq}\right)$. More precisely, we have the
following result.

\begin{thm}\label{teo:minimal-optimo}
Let $\psi \in \mathcal{U}_M(\et)$. Then $\psi \in
\mathcal{U}_{M}^{\,\textrm{\rm{opt}}}(\et)$ if and only if $\psi$
is a minimal element of $\left(\mathcal{U}_M(\et),
\overset{M}{\preceq}\right)$.
\end{thm}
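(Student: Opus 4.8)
The plan is to prove both implications by exploiting the characterization of $\mathcal{U}_M^{\,\textrm{opt}}(\et)$ via the $o(\cdot)$ condition together with Corollary~\ref{cor:nominimal} and Lemma~\ref{lem:menor estricto}. The two directions are essentially the contrapositives of one another once the right auxiliary function is constructed.

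For the forward direction, suppose $\psi \in \mathcal{U}_M^{\,\textrm{opt}}(\et)$ but, arguing by contradiction, $\psi$ is not minimal in $\left(\mathcal{U}_M(\et),\overset{M}{\preceq}\right)$. Then there exists $\tilde{\psi}\in\mathcal{U}_M(\et)$ comparable with $\psi$ on $M$ such that $\psi\overset{M}{\npreceq}\tilde{\psi}$; by comparability this forces $\tilde{\psi}\overset{M}{\preceq}\psi$ but not conversely. The key point is to upgrade ``$\tilde{\psi}\overset{M}{\preceq}\psi$ and $\psi\overset{M}{\npreceq}\tilde{\psi}$'' to the existence of a single point $x_0\in M$ at which $\tilde{\psi}\overset{\{x_0\}}{\prec}\psi$: indeed, if at every $x\in M$ we had $\limsup_{\delta\to0^+}\tilde{\psi}(x,\delta)/\psi(x,\delta)>0$, one could assemble a pointwise bound $\psi(x,\delta)\le p(x)\tilde{\psi}(x,\delta)$ on a suitable $(0,r)$ and obtain $\psi\overset{M}{\preceq}\tilde{\psi}$ — here I must be slightly careful, since $r$ may depend on $x$, but since ``$\preceq$'' only requires \emph{some} $r>0$ and $p:M\to(0,\infty)$ with no uniformity across $x$ in how small $\psi/\tilde\psi$ can be forced, a diagonal/pointwise argument suffices. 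Hence there is $x_0$ with $\tilde{\psi}(x_0,\delta)=o(\psi(x_0,\delta))$, i.e. $\tilde{\psi}\overset{\{x_0\}}{\prec}\psi$. Since $\tilde\psi\in\mathcal{U}_M(\et)\subset\mathcal{U}_{\{x_0\}}(\et)$, Corollary~\ref{cor:nominimal} applies and shows $\psi$ is not optimal — but wait, that is the wrong conclusion for this direction; rather I should run the argument the other way: from $\psi\in\mathcal{U}_M^{\,\textrm{opt}}(\et)$ and $\tilde\psi\overset{M}{\preceq}\psi$ the fourth bullet after Definition~\ref{def-sat} gives $\tilde\psi\in\mathcal{U}_M^{\,\textrm{opt}}(\et)$ too; now if $\psi\overset{M}{\npreceq}\tilde\psi$ we produced $x_0$ with $\tilde\psi(x_0,\delta)=o(\psi(x_0,\delta))$, and since $\et\overset{M}{\preceq}\tilde\psi$ this yields $\et(x_0,\delta)\le p(x_0)\tilde\psi(x_0,\delta)=o(\psi(x_0,\delta))$, contradicting that $\psi$ is an optimal upper bound at $x_0$. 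Hence $\psi$ is minimal.

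For the converse, suppose $\psi\in\mathcal{U}_M(\et)$ is minimal but not optimal. By Definition~\ref{def-sat}iii), non-optimality gives a point $x_0\in M$ with $\et(x_0,\delta)=o(\psi(x_0,\delta))$, i.e. $\et\overset{\{x_0\}}{\prec}\psi$. Take $\psi_0\doteq\et\in\mathcal{U}_{\{x_0\}}^{\,\textrm{opt}}(\et)\subset\mathcal{U}_{\{x_0\}}(\et)$ (using Remark~\ref{rem:1}); then $\psi_0\overset{\{x_0\}}{\prec}\psi$, so Corollary~\ref{cor:nominimal} immediately yields that $\psi$ is not a minimal element of $\left(\mathcal{U}_M(\et),\overset{M}{\preceq}\right)$, a contradiction. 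Therefore $\psi$ is optimal.

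I expect the main obstacle to be the bookkeeping in the forward direction around extracting the single witness point $x_0$: one must check that the failure of $\psi\overset{M}{\preceq}\tilde\psi$ is genuinely \emph{witnessed pointwise} — that is, that it cannot happen that $\limsup\tilde\psi(x,\delta)/\psi(x,\delta)>0$ at every $x$ while still $\psi\overset{M}{\npreceq}\tilde\psi$. Resolving this requires unwinding Definition~\ref{def:relac-fm}i) carefully: since no joint control over $x$ of the threshold $r$ is demanded, a positive $\limsup$ at each $x$ does let one define $p(x)$ and a (possibly $x$-dependent but that is allowed) radius, giving $\psi\overset{M}{\preceq}\tilde\psi$; so the contrapositive forces a witness $x_0$, and everything else is a direct appeal to the already-established lemmas and the bulleted closure properties of $\mathcal{U}_M^{\,\textrm{opt}}(\et)$.
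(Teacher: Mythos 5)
Your converse direction (minimal $\Rightarrow$ optimal) is correct and is essentially the paper's own argument: non-optimality yields a point $x_0$ with $\et\overset{\{x_0\}}{\prec}\psi$, and then Lemma \ref{lem:nomin} (equivalently Corollary \ref{cor:nominimal} with $\psi_0=\et$) destroys minimality.

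The forward direction, however, has a genuine gap, and it is not the one you flagged. The uniformity of the threshold $r$ in Definition \ref{def:relac-fm}\,i) is harmless: pointwise control of the ratio near $\delta=0$ upgrades to a common $r$ because both functions are continuous and positive on compact subintervals of $(0,a)$, so one may enlarge $p(x)$ instead of shrinking $r$. The real problem is a limsup/liminf confusion. Negating $\psi\overset{M}{\preceq}\tilde\psi$ produces a point $x_0$ with $\limsup_{\delta\to0^+}\psi(x_0,\delta)/\tilde\psi(x_0,\delta)=\infty$, i.e.\ $\liminf_{\delta\to0^+}\tilde\psi(x_0,\delta)/\psi(x_0,\delta)=0$ along some sequence; it does \emph{not} give $\tilde\psi(x_0,\delta)=o(\psi(x_0,\delta))$, which is what your contradiction uses. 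Your auxiliary claim --- that $\limsup_{\delta\to0^+}\tilde\psi(x,\delta)/\psi(x,\delta)>0$ at every $x$ would reassemble $\psi\overset{M}{\preceq}\tilde\psi$ --- is false: a positive limsup bounds the ratio only along a subsequence, whereas Definition \ref{def:relac-fm}\,i) demands $\psi(x,\delta)\le p(x)\tilde\psi(x,\delta)$ for \emph{all} small $\delta$; what you would need at every $x$ is $\limsup_{\delta\to0^+}\psi(x,\delta)/\tilde\psi(x,\delta)<\infty$, i.e.\ a positive liminf of $\tilde\psi/\psi$. And with only $\liminf_{\delta\to0^+}\tilde\psi(x_0,\delta)/\psi(x_0,\delta)=0$, the chain $\et(x_0,\delta)\le p(x_0)\tilde\psi(x_0,\delta)$ merely yields $\liminf_{\delta\to0^+}\et(x_0,\delta)/\psi(x_0,\delta)=0$, which is perfectly compatible with the optimality condition $\limsup_{\delta\to0^+}\et(x_0,\delta)/\psi(x_0,\delta)>0$; no contradiction follows unless the sequence along which $\tilde\psi/\psi$ degenerates is aligned with one realizing that positive limsup. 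The paper argues this step differently: from $\limsup_{\delta\to0^+}\psi(x_0,\delta)/\psi_c(x_0,\delta)=\infty$ and optimality of $\psi$ at $x_0$ it concludes $\limsup_{\delta\to0^+}\et(x_0,\delta)/\psi_c(x_0,\delta)=\infty$, i.e.\ that the competitor $\psi_c$ fails to be an upper bound at $x_0$, so the contradiction is played against $\psi_c\in\mathcal{U}_M(\et)$ rather than against the optimality of $\psi$ (note that even there the two limsups must be realized simultaneously, so the subsequence-alignment issue is the crux of this implication in any route). As written, your forward implication does not close.
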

\begin{proof}
Let $\psi \in \mathcal{U}_{M}^{\,\textrm{opt}}(\et)$ and suppose
that $\psi$ is not a minimal element of \break
$\left(\mathcal{U}_M(\et),\, \overset{M}{\preceq}\right)$. Then
there exists $\psi_c \in \mathcal{U}_M(\et)$ comparable with
$\psi$ on $M$ for which it is not true that $\psi
\overset{M}{\preceq} \psi_c$. Then, there exists $x_0 \in M$ such
that
\begin{equation}\label{eq:5}
\underset{\delta \rightarrow
0^+}{\limsup}\;\frac{\psi(x_0,\delta)}{\psi_c(x_0,\delta)}=\infty.
\end{equation}
Now, since $\psi \in \mathcal{U}_{M}^{\,\textrm{opt}}(\et)$ and
$x_0 \in M$, we have that
\begin{equation}\label{eq:6}
\underset{\delta \rightarrow
0^+}{\limsup}\;\frac{\et(x_0,\delta)}{\psi(x_0,\delta)}>0.
\end{equation}
Thus
\begin{equation*}
\underset{\delta \rightarrow
0^+}{\limsup}\;\frac{\et(x_0,\delta)}{\psi_c(x_0,\delta)}=\underset{\delta
\rightarrow
0^+}{\limsup}\;\frac{\et(x_0,\delta)}{\psi(x_0,\delta)}\frac{\psi(x_0,\delta)}{\psi_c(x_0,\delta)}\;=\;\infty
\end{equation*}
which implies that $\psi_c \notin \mathcal{U}_{\{x_0\}}(\et)$.
This contradicts the fact that $\psi_c \in \mathcal{U}_M(\et)$.
Therefore, $\psi$ must be a minimal element of
$\left(\mathcal{U}_M(\et),\, \overset{M}{\preceq}\right)$.

Conversely, assume that $\psi \in \mathcal{U}_M(\et)$ and $\psi
\notin \mathcal{U}_M^{\,\textrm{opt}}(\et)$. Then there exists $x_0
\in M$ such that $\psi \in
\mathcal{U}_{\{x_0\}}^{\,\textrm{\,str}}(\et)$, which implies that
$\et\overset{\{x_0\}}{\prec}\psi$. Lemma \ref{lem:nomin} then
implies that $\psi$ is not a minimal element of
$\left(\mathcal{U}_M(\et),\, \overset{M}{\preceq}\right)$.\hfill
\end{proof}

From the proof of Theorem \ref{teo:minimal-optimo} it follows
immediately that $\psi$ is a minimal element of
$\left(\mathcal{U}_M(\et),\, \overset{M}{\preceq}\right)$ if and
only if it is minimal of $\left(\mathcal{U}_{M^\ast}(\et),\,
\overset{M^\ast}{\preceq}\right)$ for every $M^\ast \subset M$.
Also, as a consequence of Theorem \ref{teo:minimal-optimo} one has
that all optimal upper bounds of convergence for the total error
must be equivalent in the sense of Definition
\ref{def:relac-fm}-{\it ii}. More precisely we have the following
\begin{cor}\label{cor:um-opt} Let
$\mathcal{U}_{M}^{\,\textrm{\rm{opt}}}(\et)$ and $\et$ be as
before
\begin{itemize}
\item  [\bf{i)}] If $\psi \in \mathcal{U}_{M}^{\,\textrm{\rm{opt}}}(\et)$
then $\psi \overset{M}{\approx}\et$.

\item  [\bf{ii)}] If $\psi,\tilde{\psi} \in
\mathcal{U}_{M}^{\,\textrm{\rm{opt}}}(\et)$ then $\psi
\overset{M}{\approx}\tilde{\psi}$.
\end{itemize}

\end{cor}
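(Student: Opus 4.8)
The plan is to derive both statements directly from Theorem \ref{teo:minimal-optimo} together with the already noted fact that $\et\in\mathcal{U}_M^{\,\textrm{opt}}(\et)$ and that $\overset{M}{\approx}$ is an equivalence relation on $\mathcal{F}_M$. No new computation is needed; the argument is purely order-theoretic.

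For part \textbf{i)}, suppose $\psi\in\mathcal{U}_M^{\,\textrm{opt}}(\et)$. Since $\mathcal{U}_M^{\,\textrm{opt}}(\et)\subset\mathcal{U}_M(\et)$, we have $\et\overset{M}{\preceq}\psi$, which is one of the two relations needed for $\psi\overset{M}{\approx}\et$. For the reverse relation, I would invoke Theorem \ref{teo:minimal-optimo}: since $\psi\in\mathcal{U}_M^{\,\textrm{opt}}(\et)$, it is a minimal element of $\left(\mathcal{U}_M(\et),\overset{M}{\preceq}\right)$. Now $\et\in\mathcal{U}_M(\et)$, and $\et$ is comparable with $\psi$ on $M$ because $\et\overset{M}{\preceq}\psi$ (as just observed). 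Hence, by the defining property of a minimal element (Definition \ref{def:minimal}), $\psi\overset{M}{\preceq}\et$. Combining the two relations gives $\psi\overset{M}{\approx}\et$. The only point requiring any care is checking comparability of $\et$ and $\psi$, but this is immediate from $\psi$ being an upper bound of convergence; so I do not anticipate a real obstacle here.

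For part \textbf{ii)}, let $\psi,\tilde{\psi}\in\mathcal{U}_M^{\,\textrm{opt}}(\et)$. Applying part \textbf{i)} to each of them yields $\psi\overset{M}{\approx}\et$ and $\tilde{\psi}\overset{M}{\approx}\et$. Since $\overset{M}{\approx}$ is an equivalence relation on $\mathcal{F}_M$ (as noted immediately after Definition \ref{def:relac-fm}), transitivity and symmetry give $\psi\overset{M}{\approx}\tilde{\psi}$, which completes the proof. Alternatively, one could obtain \textbf{i)} symmetrically by using that both $\psi$ and $\et$ are minimal in $\left(\mathcal{U}_M(\et),\overset{M}{\preceq}\right)$ and comparable, so each precedes the other; but routing through Theorem \ref{teo:minimal-optimo} as above is the cleanest path.
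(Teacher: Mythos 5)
Your proposal is correct and follows essentially the same route as the paper: part \textbf{i)} is obtained from the comparability of $\et$ and $\psi$ (via $\et\overset{M}{\preceq}\psi$) together with the minimality of $\psi$ guaranteed by Theorem \ref{teo:minimal-optimo}, and part \textbf{ii)} follows from \textbf{i)} and the fact that $\overset{M}{\approx}$ is an equivalence relation.
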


\smallskip
\begin{proof}

\textbf{i)} If $\psi \in \mathcal{U}_{M}^{\,\textrm{opt}}(\et)$
then $\et\overset{M}{\preceq}\psi$, from which it follows that
$\et$ and $\psi$ are comparable on $M$. Then, since $\et\in
\mathcal{U}_M(\et)$ and  by Theorem \ref{teo:minimal-optimo}
$\psi$ is a minimal element of $\left(\mathcal{U}_M(\et),\,
\overset{M}{\preceq}\right)$, we have that
$\psi\overset{M}{\preceq}\et$. Hence, $\psi
\overset{M}{\approx}\et$.

\textbf{ii)} This is an immediate consequence of \textbf{i)} and
the transitivity and reflexivity of the equivalence relation
``$\overset{M}{\approx}$'', because by {\bf i)} every $\psi \in
\mathcal{U}_{M}^{\,\textrm{opt}}(\et)$ is equivalent to $\et$ on
$M$.\hfill
\end{proof}

This result says that if $\psi$ is an optimal upper bound of
convergence on $M$ for the total error of a regularization method,
then at every point of $M$, $\psi$ tends to zero, as the noise
level tends to zero, exactly with the ``{\it same speed}" with
which the total error does.

In order to introduce the concept of saturation in the next
section, we will previously need a few more definitions and tools
that will allow us to compare bounds of convergence on different
sets of $X$.

\begin{defn}\label{def:relac-mn}
Let $M,\tilde{M}\subset X$, $\psi \in \mathcal{F}_M$ and
$\tilde{\psi}\in \mathcal{F}_{\tilde{M}}$.

i) We say that ``$\psi$ on $M$ precedes $\tilde{\psi}$ on
$\tilde{M}$'', and we denote it with $\psi \overset{M,
\tilde{M}}{\preceq}\tilde{\psi}$, if there exist a constant $d>0$
and a function $k:M\times \tilde{M}\rightarrow (0,\infty)$ such
that $\psi(x,\delta)\leq k(x,\tilde{x})\,
\tilde{\psi}(\tilde{x},\delta)$ for every $x \in M$, for every
$\tilde{x}\in \tilde{M}$ and for every $\delta \in (0,d).$

ii) We say that ``$\psi$ on $M$ is equivalent to $\tilde{\psi}$ on
$\tilde{M}$'', and we denote it with $\psi \overset{M,
\tilde{M}}{\approx}\tilde{\psi}$, if $\,\psi \overset{M,
\tilde{M}}{\preceq}\tilde{\psi}\;$ and $\;\tilde{\psi}
\overset{\tilde{M}, M}{\preceq}{\psi}$.

iii) We say that ``$\psi$ on $M$ strictly precedes $\tilde{\psi}$
on $\tilde{M}$'', and we denote it with $\psi \overset{M,
\tilde{M}}{\prec}\tilde{\psi}$, if $\,\psi \overset{M,
\tilde{M}}{\preceq}\tilde{\psi}\,$ and $\,\underset{\delta
\rightarrow0^+}{\limsup}\,\frac{\psi(x,\delta)}{\tilde{\psi}(\tilde{x},\delta)}=0$
for every $x \in M$, $\tilde{x} \in \tilde{M}.$
\end{defn}

\begin{rem}
In a certain sense, when $M=\tilde M$, the previous definitions
generalize (although they are slightly stronger than) the
relations introduced in Definition \ref{def:relac-fm}. Note for
instance that if $\psi \overset{M, M}{\prec}\tilde{\psi}$ then
$\psi \overset{M}{\prec}\tilde{\psi}$, although the converse, in
general, is not true.
\end{rem}

It follows immediately from Definition \ref{def:relac-mn} that if
$\psi\overset{M,N}{\preceq}\,\tilde{\psi}$ then
$\psi\overset{\tilde{M},\tilde{N}}{\preceq}\,\tilde{\psi}$ for
every $\tilde{M}\subset M$ and for every $\tilde{N}\subset N.$ The
same happens for the relations ``$\overset{M, N}{\approx}$'' and
``$\overset{M, N}{\prec}$''.

Next, we also need to extend the notion of ``comparability'' given
in Definition \ref{def:comparable}, to this case.

\begin{defn}
Let $M,\tilde{M}\subset X$, $\psi \in \mathcal{F}_M$ and
$\tilde{\psi}\in \mathcal{F}_{\tilde{M}}$.

i) We say that ``$\psi$ on $M$ is comparable with $\tilde{\psi}$
on $\tilde{M}$'' if $\psi
\overset{M,\tilde{M}}{\preceq}\tilde{\psi}$ or $\tilde{\psi}
\overset{\tilde{M},M}{\preceq}\psi.$

ii) We say that ``$\psi$ is invariant over $M$'' if $\psi
\overset{M,M}{\approx}\psi$.
\end{defn}

\begin{rem}\label{rem:MM}
It is immediate that the condition $\psi
\overset{M,M}{\approx}\psi$ is equivalent to $\psi
\overset{M,M}{\preceq}\psi$.
\end{rem}

This last notion of ``invariance'', which will play an important
roll in the characterization of saturation, roughly speaking
establishes that if $\psi$ is invariant over $M$ then the orders
of convergence of $\psi$ as a function of $\delta$ when $\delta
\rightarrow 0^+$, in any two points of $M$, are equivalent.

The following result is related to a certain transitivity property
of this invariance relation.

\begin{lem}\label{lem:transit}
Let $M\subset X$, $\psi,\tilde{\psi}\in \mathcal{F}_M$ be such
that $\tilde{\psi}\overset{M}{\approx}\psi$ and
$\psi\overset{M,M}{\approx}\psi$. Then:
\begin{itemize}
\item [\bf{i)}] $\tilde{\psi}\overset{M,M}{\approx}\psi$ and
\item [\bf{ii)}] $\tilde\psi\overset{M,M}{\approx}\tilde\psi$ (i.e.
$\tilde{\psi}$ is also invariant over $M$).
\end{itemize}
\end{lem}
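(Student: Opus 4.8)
The plan is to unwind the definitions and chain together the two-set precedence relations, treating everything at the level of the $\overset{M,\tilde M}{\preceq}$ relation with its associated constants and comparison functions. First I would record explicitly what the two hypotheses give us: from $\tilde\psi\overset{M}{\approx}\psi$ there are constants $r_1>0$, $p_1:M\to(0,\infty)$ with $\tilde\psi(x,\delta)\le p_1(x)\psi(x,\delta)$ on $M\times(0,r_1)$, and constants $r_2>0$, $p_2:M\to(0,\infty)$ with $\psi(x,\delta)\le p_2(x)\tilde\psi(x,\delta)$ on $M\times(0,r_2)$; from $\psi\overset{M,M}{\approx}\psi$, which by Remark~\ref{rem:MM} is the same as $\psi\overset{M,M}{\preceq}\psi$, there is a constant $d>0$ and $k:M\times M\to(0,\infty)$ with $\psi(x,\delta)\le k(x,\tilde x)\,\psi(\tilde x,\delta)$ for all $x,\tilde x\in M$ and $\delta\in(0,d)$.

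For \textbf{i)} I must show $\tilde\psi\overset{M,M}{\preceq}\psi$ and $\psi\overset{M,M}{\preceq}\tilde\psi$. For the first: for $x,\tilde x\in M$ and $\delta$ small enough (namely $\delta<\min\{r_1,d\}$) write $\tilde\psi(x,\delta)\le p_1(x)\psi(x,\delta)\le p_1(x)k(x,\tilde x)\psi(\tilde x,\delta)$, so the comparison function is $(x,\tilde x)\mapsto p_1(x)k(x,\tilde x)$, which is positive-valued, and the threshold is $\min\{r_1,d\}>0$. For the second: $\psi(x,\delta)\le k(x,\tilde x)\psi(\tilde x,\delta)\le k(x,\tilde x)p_2(\tilde x)\tilde\psi(\tilde x,\delta)$ for $\delta<\min\{r_2,d\}$, giving the comparison function $(x,\tilde x)\mapsto k(x,\tilde x)p_2(\tilde x)$. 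Combining both gives $\tilde\psi\overset{M,M}{\approx}\psi$.

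For \textbf{ii)} the cleanest route is to apply part i) twice, or rather to note that once we know $\tilde\psi\overset{M,M}{\approx}\psi$ we also know, by symmetry of $\overset{M}{\approx}$, that $\psi\overset{M}{\approx}\tilde\psi$, and running the argument of i) with the roles of $\psi$ and $\tilde\psi$ swapped — using $\tilde\psi\overset{M,M}{\approx}\psi$ in place of the hypothesis on $\psi$ — would be circular. Instead I would argue directly: for $x,\tilde x\in M$ and $\delta$ small, $\tilde\psi(x,\delta)\le p_1(x)\psi(x,\delta)\le p_1(x)k(x,\tilde x)\psi(\tilde x,\delta)\le p_1(x)k(x,\tilde x)p_2(\tilde x)\tilde\psi(\tilde x,\delta)$, valid for $\delta<\min\{r_1,r_2,d\}$, which exhibits $\tilde\psi\overset{M,M}{\preceq}\tilde\psi$ with comparison function $(x,\tilde x)\mapsto p_1(x)k(x,\tilde x)p_2(\tilde x)$; by Remark~\ref{rem:MM} this is exactly $\tilde\psi\overset{M,M}{\approx}\tilde\psi$.

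There is no real obstacle here — the statement is essentially a bookkeeping exercise in composing inequalities and intersecting finitely many positive thresholds. The only points requiring a little care are: keeping track of which threshold ($r_1$, $r_2$, $d$, or their minimum) applies at each stage; checking that the composed comparison functions remain strictly positive (which is automatic since each factor maps into $(0,\infty)$); and invoking Remark~\ref{rem:MM} at the end of ii) so that $\overset{M,M}{\preceq}$ upgrades to $\overset{M,M}{\approx}$ without having to verify a symmetric condition separately. I would present i) first, then derive ii) either as the displayed three-step chain above or simply by remarking that $\tilde\psi\overset{M,M}{\approx}\psi\overset{M}{\approx}\tilde\psi$ together with the chain of inequalities yields invariance of $\tilde\psi$.
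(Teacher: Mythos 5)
Your proposal is correct and follows essentially the same route as the paper: the same chaining of the pointwise inequalities from $\tilde\psi\overset{M}{\approx}\psi$ with the invariance inequality for $\psi$, taking minima of the thresholds, to get both directions of i), and then the same three-step chain (rather than a circular appeal to i)) together with Remark~\ref{rem:MM} for ii). The only cosmetic difference is that you carry the comparison functions $p_1,p_2,k$ uniformly while the paper fixes $x,\tilde x$ and works with constants $k_x,k_{\tilde x},k^\ast_{x,\tilde x}$.
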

\begin{proof}

Let $M\subset X$, $\psi,\tilde{\psi}\in \mathcal{F}_M$,
$x,\tilde{x}\in M$ and suppose that
$\psi\overset{M,M}{\approx}\psi$ and
$\tilde{\psi}\overset{M}{\approx}\psi$.

\textbf{i)} Since $\tilde{\psi}\overset{M}{\approx}\psi$, there
exist positive constants $d,k_x$ and $k_{\tilde{x}}$ such that for
every $\delta \in (0,d)$,
\begin{equation}\label{eq:7}
\tilde{\psi}(x,\delta)\leq k_x\psi(x,\delta) \quad
\textrm{and}\quad \psi(\tilde{x},\delta)\leq
k_{\tilde{x}}\tilde{\psi}(\tilde{x},\delta).
\end{equation}
On the other hand, from the invariance of $\psi$ over $M$ it
follows that there exist positive constants $d^\ast$ y
$k^\ast_{x,\tilde{x}}$ such that $\psi(x,\delta)\leq
k^\ast_{x,\tilde{x}}\psi(\tilde{x},\delta)$ for every $\delta \in
(0,d^\ast)$, which together with (\ref{eq:7}) implies that for
every  $\delta \in (0,\min\{d,d^\ast\})$,
\begin{equation}\label{eq:10}
 \tilde{\psi}(x,\delta)\leq k_x\psi(x,\delta)\leq
  k_xk^\ast_{x,\tilde{x}}\psi(\tilde{x},\delta) \quad
  \textrm{and}\quad \psi(x,\delta)\leq k^\ast_{x,\tilde{x}}
\psi(\tilde{x},\delta) \leq k^\ast_{x,\tilde{x}}
  k_{\tilde{x}}\tilde{\psi}(\tilde x,\delta).
\end{equation}
Since $x,\tilde x\in M$ are arbitrary, it follows that
$\tilde{\psi}\overset{M,M}{\preceq}\psi$ and
$\psi\overset{M,M}{\preceq}\tilde{\psi}$, that is,
$\tilde{\psi}\overset{M,M}{\approx}\psi$.

\textbf{ii)} From the first inequality in (\ref{eq:10}) and from
the second inequality in (\ref{eq:7}) it follows immediately that
$\tilde{\psi}\overset{M,M}{\preceq}\tilde{\psi}$ and therefore by
Remark \ref{rem:MM}, $\tilde{\psi}$ is invariant over $M$.\hfill
\end{proof}

The following result is analogous to Lemma \ref{lem:menor
estricto} for this case of comparison of convergence bounds on
different sets.

\begin{lem}\label{lem:prec-2}
Let $M,N\subset X$, $\psi \in \mathcal{F}_M$ and $\tilde{\psi}\in
\mathcal{F}_{N}$. If $\psi \overset{M, N}{\prec}\tilde{\psi}$ then
$\forall \;\tilde{M}\subset M$, $\forall \; \tilde{N}\subset N$ we
have that $\tilde{\psi} \overset{\tilde{N},
\tilde{M}}{\npreceq}\psi$.
\end{lem}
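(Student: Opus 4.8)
The plan is to imitate the proof of Lemma \ref{lem:menor estricto}, arguing by contrapositive. So I would assume the conclusion fails, i.e.\ that there exist $\tilde M\subset M$ and $\tilde N\subset N$ with $\tilde\psi\overset{\tilde N,\tilde M}{\preceq}\psi$, and from this deduce that $\psi\overset{M,N}{\nprec}\tilde\psi$, which is logically equivalent to the statement.

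First I would unfold the hypothesis $\tilde\psi\overset{\tilde N,\tilde M}{\preceq}\psi$ by Definition \ref{def:relac-mn}-i): there exist a constant $d>0$ and a function $k:\tilde N\times\tilde M\to(0,\infty)$ such that $\tilde\psi(\tilde x,\delta)\le k(\tilde x,x)\,\psi(x,\delta)$ for every $\tilde x\in\tilde N$, every $x\in\tilde M$ and every $\delta\in(0,d)$. Fixing any $x_0\in\tilde M$ and $\tilde x_0\in\tilde N$ (both nonempty by the standing convention on subsets of $X$), and using that $\psi,\tilde\psi$ take values in $(0,\infty)$, this yields
\[
\underset{\delta\in(0,d)}{\sup}\,\frac{\tilde\psi(\tilde x_0,\delta)}{\psi(x_0,\delta)}\ \le\ k(\tilde x_0,x_0)\ <\ \infty .
\]

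Next I would pass to reciprocals exactly as in Lemma \ref{lem:menor estricto}:
\[
\underset{\delta\rightarrow0^+}{\limsup}\,\frac{\psi(x_0,\delta)}{\tilde\psi(\tilde x_0,\delta)}\ \ge\ \underset{\delta\in(0,d)}{\inf}\,\frac{\psi(x_0,\delta)}{\tilde\psi(\tilde x_0,\delta)}\ =\ \left(\underset{\delta\in(0,d)}{\sup}\,\frac{\tilde\psi(\tilde x_0,\delta)}{\psi(x_0,\delta)}\right)^{-1}\ \ge\ \frac{1}{k(\tilde x_0,x_0)}\ >\ 0 .
\]
Since $x_0\in\tilde M\subset M$ and $\tilde x_0\in\tilde N\subset N$, the pair $(x_0,\tilde x_0)$ is admissible in Definition \ref{def:relac-mn}-iii), and the displayed inequality shows that the condition $\underset{\delta\rightarrow0^+}{\limsup}\,\frac{\psi(x,\delta)}{\tilde\psi(\tilde x,\delta)}=0$ cannot hold for \emph{every} $x\in M$, $\tilde x\in N$. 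Hence $\psi\overset{M,N}{\nprec}\tilde\psi$, which is the contrapositive of the claim.

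I do not expect any real obstacle here; the argument is a direct transcription of Lemma \ref{lem:menor estricto} to the two-set setting. The only point that needs a little care is the handling of the quantifiers: the relation $\overset{M,N}{\prec}$ requires the $o(\cdot)$ condition at \emph{all} pairs $x\in M$, $\tilde x\in N$, so exhibiting a single pair at which the $\limsup$ is strictly positive suffices to negate it, and the inclusions $\tilde M\subset M$ and $\tilde N\subset N$ are precisely what guarantee the chosen pair lies in $M\times N$.
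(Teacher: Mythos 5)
Your proposal is correct and follows essentially the same argument as the paper: the contrapositive, unfolding Definition \ref{def:relac-mn}-i) at a single pair $(x_0,\tilde x_0)\in\tilde M\times\tilde N$, passing to reciprocals to get a positive lower bound on the $\limsup$, and concluding that the strict precedence relation fails since that pair lies in $M\times N$. The only cosmetic difference is that the paper inserts a $\liminf$ between the $\limsup$ and the infimum over $(0,d)$, which your direct inequality already covers.
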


\begin{proof}
By the contrareciprocal. Suppose that there exist $\tilde{M}
\subset M$ and $\tilde{N} \subset N$ such that
$\tilde{\psi}\overset{\tilde{N}, \tilde{M}}{\preceq}\psi$. Then
there exist a constant $d>0$ and $k:\tilde{N}\times
\tilde{M}\rightarrow (0,\infty)$ such that
$\tilde{\psi}(\tilde{x},\delta)\leq k(\tilde{x},x)\,
\psi(x,\delta)$ for every $\tilde{x} \in \tilde{N}$, $x \in
\tilde{M}$ and $\delta \in (0,d).$ Let $x_0 \in \tilde{M}$ and
$\tilde{x}_0 \in \tilde{N}$, then
$\tilde{\psi}(\tilde{x}_0,\delta)\leq k(\tilde{x}_0,x_0)\,
\psi(x_0,\delta)$ for every $\delta \in (0,d).$ Thus,
$\underset{\delta \,\in
(0,d)}{\sup}\frac{\tilde{\psi}(\tilde{x}_0,\delta)}{\psi(x_0,\delta)}\leq
k(\tilde{x}_0,x_0)<\infty$. Then,
\begin{eqnarray*}
\underset{\delta\rightarrow0^+}{\limsup}\frac{\psi(x_0,\delta)}{\tilde{\psi}(\tilde{x}_0,\delta)}&\geq&
\underset{\delta\rightarrow0^+}{\liminf}\frac{\psi(x_0,\delta)}{\tilde{\psi}(\tilde{x}_0,\delta)}\geq
\underset{\delta\in
(0,d)}{\inf}\frac{\psi(x_0,\delta)}{\tilde{\psi}(\tilde{x}_0,\delta)}\\
  &=&\left(\underset{\delta\in
(0,d)}{\sup}\frac{\tilde{\psi}(\tilde{x}_0,\delta)}{\psi(x_0,\delta)}\right)^{-1}\geq\frac{1}{k(\tilde{x}_0,x_0)}>0.
\end{eqnarray*}

Hence, $\psi\overset{\{x_0\},
\{\tilde{x}_0\}}{\nprec}\tilde{\psi}$, from which it follows that
$\psi\overset{M, N}{\nprec}\tilde{\psi}$, since $x_0\in M$ and
$\tilde x_0\in N$ .\hfill
\end{proof}

\section{Global Saturation} \label{sec:3}

We will now proceed to formalize the concept of global saturation.

\begin{defn}\label{def:satur}
Let $M_S \subset X$ and $\psi_S \in \mathcal{U}_{M_S}(\et)$. We
say that $\psi_S$ is a ``global saturation function of
$\{R_\alpha\}$ over $M_S$'' if $\psi_S$ satisfies the following
three conditions:

S1. For every $x^\ast \in X$, $x^\ast\neq 0$, $x \in M_S$,
$\underset{\delta
\rightarrow0^+}{\limsup}\,\frac{\et(x^\ast,\delta)}{\psi_S(x,\delta)}>0.$

S2. $\psi_S$ is invariant over $M_S$.

S3. There is no upper bound of convergence for the total error of
$\{R_\alpha\}$ that is a proper extension of $\psi_S$ (in the
variable $x$) and satisfies S1 and S2, that is, there exist no
$\tilde{M}\supsetneqq M_S$ and $\tilde{\psi} \in \mathcal{U}_{
\tilde{M}}(\et)$ such that $\tilde{\psi}$ satisfies S1 and S2 with
$M_S$ replaced by $\tilde{M}$ and $\psi_S$ replaced by
$\tilde{\psi}$.
\end{defn}

We shall refer to  $\psi_S$ and $M_S$ as the saturation function
and the saturation set, respectively.

\begin{rem}\label{obs:sat}
Note that condition S1 implies that for every $M\subset X$ and for
every $\psi \in \mathcal{U}_M (\et)$, $\underset{\delta
\rightarrow0^+}{\limsup}\,\frac{\psi(x^\ast,\delta)}{\psi_S(x,\delta)}>0$
for every $x^\ast \in M$, $x \in M_S$ (this is an immediate
consequence of S1 and the fact that $\et\overset{M}{\preceq}\psi
\; \forall \; \psi \in \mathcal{U}_M(\et)$). Therefore, it cannot
happen that $\psi\overset{M, M_S}{\prec}\psi_S$. On the other
hand, if $\psi \in \mathcal{U}_M (\et)$ then it is not necessarily
true that $\psi_S\overset{M_S, M}{\preceq}\psi$ even if $\psi$ on
$M$ is comparable to $\psi_S$ on $M_S$, because in this case it
can happen that $\underset{\delta
\rightarrow0^+}{\liminf}\,\frac{\psi(x,\delta)}{\psi_S(x_S,\delta)}=0$
for some $x \in M$ and some $x_S \in M_S$ (which obviously implies
that $\psi_S\overset{M_S, M}{\npreceq}\psi$), and still have
$\underset{\delta
\rightarrow0^+}{\limsup}\,\frac{\psi(x,\delta)}{\psi_S(x_S,\delta)}>0$.
However, if $\psi$ on $M$ is comparable with $\psi_S$ on $M_S$ and
there exists $\underset{\delta
\rightarrow0^+}{\lim}\,\frac{\psi(x,\delta)}{\psi_S(x_S,\delta)}$
for every $x \in M$ and for every $x_S \in M_S$, then it is in
fact true that $\psi_S\overset{M_S, M}{\preceq}\psi$. Note also
that condition S1 can be replaced by
$$\underset{\delta
\rightarrow0^+}{\limsup}\,\frac{\psi(x^\ast,\delta)}{\psi_S(x,\delta)}>0\quad
\forall \; \psi \in \mathcal{U}_{\{x^\ast\}}(\et), \forall\;
x^\ast \in X, x^\ast \neq 0, x\in M_S.
$$
\end{rem}

This conception of global saturation essentially establishes that
in no point $x^\ast \in X$, $x^\ast \neq 0$, can exist an upper
bound of convergence for the total error of the regularization
method that is ``strictly better" than the saturation function
$\psi_S$ at any point of the saturation set $M_S$.

Next we show that any function satisfying condition S1, in
particular any saturation function, is always an optimal upper
bound of convergence.

\smallskip
\begin{lem}\label{lem:sat-opt}
Let $\psi_S \in \mathcal{U}_{M_S}(\et)$. If $\psi_S$ satisfies the
condition \textit{S1} on $M_S$, then $\psi_S \in
\mathcal{U}_{M_S}^{\,\textrm{\rm{opt}}}(\et)$.
\end{lem}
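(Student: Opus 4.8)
The plan is to prove the contrapositive: assuming $\psi_S \notin \mathcal{U}_{M_S}^{\,\textrm{opt}}(\et)$, produce a point of $M_S$ violating condition S1. Since $\psi_S \in \mathcal{U}_{M_S}(\et)$ by hypothesis, failure of optimality means there is some $x_0 \in M_S$ at which $\psi_S$ fails the limsup condition of Definition \ref{def-sat}-iii; equivalently, $\et \overset{\{x_0\}}{\prec} \psi_S$, i.e. $\psi_S \in \mathcal{U}_{\{x_0\}}^{\,\textrm{str}}(\et)$. The heart of the argument is then to observe that, by the definition of strict precedence, $\lim_{\delta\to 0^+}\frac{\et(x_0,\delta)}{\psi_S(x_0,\delta)} = 0$.

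Now I would instantiate condition S1 at the specific pair $x^\ast = x_0 \in X\setminus\{0\}$ (recall all sets under consideration exclude $0$, so $x_0 \neq 0$) and $x = x_0 \in M_S$. Condition S1 asserts
\begin{equation*}
\underset{\delta\to 0^+}{\limsup}\,\frac{\et(x_0,\delta)}{\psi_S(x_0,\delta)} > 0,
\end{equation*}
which directly contradicts the limit being $0$ that we derived from $\et \overset{\{x_0\}}{\prec}\psi_S$. Hence the assumption that $\psi_S$ fails to be an optimal upper bound is untenable, and therefore $\psi_S \in \mathcal{U}_{M_S}^{\,\textrm{opt}}(\et)$.

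The argument is essentially a one-line unwinding of definitions, so there is no real obstacle; the only point requiring a little care is the translation between the three equivalent formulations of non-optimality — namely ($\psi_S$ is not an optimal upper bound) $\iff$ (there is $x_0\in M_S$ with $\et(x_0,\delta) = o(\psi_S(x_0,\delta))$) $\iff$ ($\psi_S \in \mathcal{U}_{\{x_0\}}^{\,\textrm{str}}(\et)$), which follows from the remark after Definition \ref{def:relac-fm} identifying $\limsup = 0$ with the genuine limit being $0$, together with the fact that $\et \overset{M_S}{\preceq}\psi_S$ restricts to $\et \overset{\{x_0\}}{\preceq}\psi_S$. Once that equivalence is in hand, applying S1 at the pair $(x_0,x_0)$ closes the argument immediately. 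One could alternatively phrase the whole thing using Theorem \ref{teo:minimal-optimo} (optimal $\iff$ minimal in $(\mathcal{U}_{M_S}(\et),\overset{M_S}{\preceq})$) and Corollary \ref{cor:nominimal}, but the direct route via S1 is shorter.
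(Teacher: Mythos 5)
Your proposal is correct and is essentially the paper's own argument: the paper simply instantiates condition S1 at $x^\ast=x$ for each $x\in M_S$ (legitimate since $0\notin M_S$), obtaining $\limsup_{\delta\to 0^+}\et(x,\delta)/\psi_S(x,\delta)>0$ for every $x\in M_S$, which together with $\psi_S\in\mathcal{U}_{M_S}(\et)$ is precisely the definition of an optimal upper bound. You phrase the same one-step observation as a contrapositive, which changes nothing of substance.
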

\begin{proof}
The condition \textit{S1} implies in particular that
$\underset{\delta
\rightarrow0^+}{\limsup}\,\frac{\et(x,\delta)}{\psi_S(x,\delta)}>0$
for every $x \in M_S$. Since also by definition $\psi_S \in
\mathcal{U}_{M_S}(\et)$ it follows that $\psi_S$ is an optimal
upper bound of convergence for the total error of $\{R_\alpha\}$,
i.e. $\psi_S \in \mathcal{U}_{M_S}^{\,\textrm{opt}}(\et)$. \hfill
\end{proof}

An immediate consequence of this lemma is the equivalence between
the saturation function and the total error on the saturation set.

\begin{cor}\label{cor:sat-equiverror}
If $\psi_S$ is a saturation function of $\{R_\alpha\}$ on $M_S$
then $\psi_S \overset{M_S}{\approx}\et$. Moreover, we have the
stronger equivalence $\psi_S \overset{M_S,M_S}{\approx}\et$.
\end{cor}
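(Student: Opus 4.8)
The plan is to derive Corollary~\ref{cor:sat-equiverror} directly from Lemma~\ref{lem:sat-opt}, Corollary~\ref{cor:um-opt}, and condition S2, using essentially the same ``promotion'' argument that appears in the proof of Lemma~\ref{lem:transit}. First I would invoke Lemma~\ref{lem:sat-opt}: since a saturation function $\psi_S$ satisfies S1 by Definition~\ref{def:satur}, we get $\psi_S \in \mathcal{U}_{M_S}^{\,\textrm{opt}}(\et)$. Then Corollary~\ref{cor:um-opt}-\textbf{i)} immediately yields the first claim, $\psi_S \overset{M_S}{\approx}\et$.

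For the stronger invariant equivalence $\psi_S \overset{M_S,M_S}{\approx}\et$, I would use condition S2, which says $\psi_S$ is invariant over $M_S$, i.e. $\psi_S \overset{M_S,M_S}{\approx}\psi_S$. Now I have $\et \overset{M_S}{\approx}\psi_S$ (from the first part, using symmetry of $\overset{M_S}{\approx}$) together with $\psi_S \overset{M_S,M_S}{\approx}\psi_S$. This is exactly the hypothesis of Lemma~\ref{lem:transit} with the roles $\psi \leftarrow \psi_S$ and $\tilde\psi \leftarrow \et$ (both lie in $\mathcal{F}_{M_S}$, $\psi_S$ by Remark~\ref{rem:1} applied to $\et$ and Definition~\ref{def-sat} placing $\psi_S \in \mathcal{F}_{M_S}$, and $\et \in \mathcal{F}_{M_S}$ by Remark~\ref{rem:1}). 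Lemma~\ref{lem:transit}-\textbf{i)} then gives precisely $\et \overset{M_S,M_S}{\approx}\psi_S$, which is the desired conclusion $\psi_S \overset{M_S,M_S}{\approx}\et$ by symmetry of the relation ``$\overset{M_S,M_S}{\approx}$''.

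I do not anticipate a genuine obstacle here; the corollary is a bookkeeping consequence of the preceding machinery. The only point requiring a little care is verifying that all hypotheses of Lemma~\ref{lem:transit} are literally met — in particular that $\et$ and $\psi_S$ both belong to $\mathcal{F}_{M_S}$ (so that the relations $\overset{M_S}{\approx}$ and $\overset{M_S,M_S}{\approx}$ are even defined for them), and that the equivalence relation $\overset{M_S,M_S}{\approx}$ is symmetric (which follows directly from part ii) of Definition~\ref{def:relac-mn}, since interchanging the two roles interchanges the two ``$\preceq$'' conditions). Once these are noted, the proof is two lines: apply Lemma~\ref{lem:sat-opt} plus Corollary~\ref{cor:um-opt} for the weak equivalence, then feed the weak equivalence and S2 into Lemma~\ref{lem:transit} for the invariant one.

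\begin{proof}
Since $\psi_S$ is a saturation function of $\{R_\alpha\}$ on $M_S$, it satisfies condition \textit{S1}, and hence by Lemma \ref{lem:sat-opt} we have $\psi_S \in \mathcal{U}_{M_S}^{\,\textrm{opt}}(\et)$. By Corollary \ref{cor:um-opt}-\textbf{i)} it follows that $\psi_S \overset{M_S}{\approx}\et$, which proves the first assertion.

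For the second assertion, note that $\et \in \mathcal{F}_{M_S}$ by Remark \ref{rem:1} and $\psi_S \in \mathcal{F}_{M_S}$ by hypothesis. From the first part and the symmetry of ``$\overset{M_S}{\approx}$'' we have $\et \overset{M_S}{\approx}\psi_S$, while condition \textit{S2} says that $\psi_S$ is invariant over $M_S$, i.e. $\psi_S \overset{M_S,M_S}{\approx}\psi_S$. Applying Lemma \ref{lem:transit}-\textbf{i)} with $\psi$ replaced by $\psi_S$ and $\tilde\psi$ replaced by $\et$, we obtain $\et \overset{M_S,M_S}{\approx}\psi_S$, that is, $\psi_S \overset{M_S,M_S}{\approx}\et$. \hfill
\end{proof}
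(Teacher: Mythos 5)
Your proof is correct and follows essentially the same route as the paper: the first equivalence via Lemma \ref{lem:sat-opt} together with Corollary \ref{cor:um-opt}\,\textbf{i)}, and the stronger one by feeding that equivalence and condition \textit{S2} into Lemma \ref{lem:transit}\,\textbf{i)}. The extra care you take in checking that $\et,\psi_S\in\mathcal{F}_{M_S}$ and that ``$\overset{M_S,M_S}{\approx}$'' is symmetric is sound but not a point of divergence from the paper's argument.
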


\smallskip
\begin{proof}
The first part of the corollary is an immediate consequence of the
previous lemma and of Corollary \ref{cor:um-opt} \textbf{i)}. The
second part follows from the first and the fact that $\psi_S
\overset{M_S,M_S}{\approx}\psi_S$, via Lemma \ref{lem:transit}
\textbf{i)}.\hfill
\end{proof}

\smallskip
\begin{rem}\label{obs:invariancia-et}
A consequence of the first part of this corollary and of Lemma
\ref{lem:transit} ii) is that if $\psi_S$ is a saturation function
of  $\{R_\alpha\}$ on $M_S$, then $\et
\overset{M_S,M_S}{\approx}\et$, that is, the total error must be
invariant over $M_S$. We will shed more light on this matter in
Theorem \ref{teo:caract-sat}.
\end{rem}

\smallskip
\begin{defn}\label{def:inmejorable}
Let $M\subset X$ and $\psi \in \mathcal{U}_{\,X}(\et)$. We say
that ``$M$ is optimal for $\psi$'', and we denote it with $M \in
\mathcal{O}(\psi)$, if the following condition holds:

C2. For every $x \in M$, $x_c \in M^c$ neither $\psi
\overset{\{x_c\}, \{x\}}{\prec}\psi$ nor $\psi \overset{\{x_c\},
\{x\}}{\approx}\psi$.
\end{defn}

\smallskip
That a set $M$ be optimal for $\psi$ essentially means that at any
point of the complement of $M$, the order of convergence of $\psi$
as a function of $\delta$, for $\delta \rightarrow 0^+$, cannot be
better nor even equivalent to the order of convergence of $\psi$
at any point outside $M$; that is, at any point outside of $M$,
the order of convergence of $\psi$ must be strictly worse than
itself at any point of $M$. However, we will see next that this
optimality condition imposes a very precise restriction. As we
shall see later on (Theorem \ref{teo:caract-sat}), it is precisely
this property of the total error, together with its invariance on
the set $M_S$, what will allow us to characterize the
regularization methods which do have saturation.

Condition {\it C2} is very precise and gives no room for maneuver.
In fact, let $\psi \in \mathcal{U}_{\,X}(\et)$, $M \subset X$ and
consider the following conditions:

\textit{C1}. $\quad \psi\overset{M,M^c}{\prec}\psi.$

\textit{C3}. $\quad \psi \overset{M^c,M}{\nprec}\psi\;$ and
$\;\psi \overset{M^c,M}{\napprox}\psi$.

Then it follows that condition \textit{C2} (of optimal set) is
strictly stronger than condition \textit{C3}, and strictly weaker
than condition \textit{C1}. In fact, if $M$ is optimal for $\psi$
in the sense of Definition \ref{def:inmejorable}, then for every
$x\in M$, $x_c \in M^c$ we have that $\psi \overset{\{x_c\},
\{x\}}{\nprec}\psi$ and $\psi \overset{\{x_c\},
\{x\}}{\napprox}\psi$, from which it follows immediately that
$\psi \overset{M^c,M}{\nprec}\psi$ and $\psi
\overset{M^c,M}{\napprox}\psi$, that is, \textit{C3} holds.
However, for condition \textit{C3} to hold it is sufficient that
there exist $x\in M$ and $x_c \in M^c$ such that $\psi
\overset{\{x_c\}, \{x\}}{\nprec}\psi$ and $\psi \overset{\{x_c\},
\{x\}}{\napprox}\psi$, which obviously does not imply condition
\textit{C2}. On the other hand if \textit{C1} holds, then it
follows from Lemma \ref{lem:prec-2} that for every $x \in M$, $x_c
\in M^c$, there holds $\psi \overset{\{x_c\},\{x\}}{\npreceq}\psi$
and therefore, $\psi \overset{\{x_c\}, \{x\}}{\nprec}\psi$ and
$\psi \overset{\{x_c\}, \{x\}}{\napprox}\psi$ for every $x \in M$,
$x_c \in M^c$, that is, condition \textit{C2} holds. However,
\textit{C2} does not imply \textit{C1} since it can happen that
$M$ be optimal for $\psi$ and that there exist $x \in M$ and $x_c
\in M^c$ such that $\psi$ on $\{x\}$ is not comparable with $\psi$
on $\{x_c\}$. This implies in particular that $\psi
\overset{\{x\},\{x_c\}}{\npreceq}\psi$ and therefore, $\psi
\overset{M,M^c}{\nprec}\psi$.

In order to be able to characterize the regularization methods
which do have saturation, we will previously need the following
result.

\smallskip
\begin{lem}\label{lem:noequiv}
Suppose that $\{R_\alpha\}$ has saturation function on $M\subset
X$ and for every $x \in M$, $x_c \in M^c$ there holds $\et
\overset{\{x_c\}, \{x\}}{\nprec}\et$. Then $\et \overset{\{x_c\},
\{x\}}{\napprox}\et$ for every $x \in M$, $x_c \in M^c$.
\end{lem}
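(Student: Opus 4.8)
The plan is to argue by contradiction, assuming that there exist $x \in M$ and $x_c \in M^c$ with $\et \overset{\{x_c\},\{x\}}{\approx}\et$, and to use the saturation hypothesis together with condition S3 to manufacture a forbidden proper extension of the saturation function. Let $\psi_S$ be the saturation function of $\{R_\alpha\}$ on $M$, so that by Corollary \ref{cor:sat-equiverror} we have $\psi_S \overset{M,M}{\approx}\et$, and in particular $\psi_S \overset{M}{\approx}\et$. I will then define a candidate function on the enlarged set $\tilde{M}\doteq M \cup \{x_c\}$ by setting $\tilde{\psi}(z,\delta)\doteq \psi_S(z,\delta)$ for $z \in M$ and $\tilde{\psi}(x_c,\delta)\doteq \et(x_c,\delta)$ (this is the same splicing device used in the proof of Corollary \ref{cor:nominimal}). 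Since $\et \in \mathcal{U}_{\{x_c\}}^{\,\textrm{opt}}(\et)$ by Remark \ref{rem:1}, and $\psi_S \in \mathcal{U}_M(\et)$, the spliced function $\tilde{\psi}$ lies in $\mathcal{U}_{\tilde{M}}(\et)$.

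The core of the argument is to verify that $\tilde{\psi}$ satisfies S1 and S2 on $\tilde{M}$, which, since $\tilde{M}\supsetneqq M$, contradicts S3. For S1: for any $x^\ast \in X$, $x^\ast \neq 0$, and any $z \in \tilde{M}$, I must show $\limsup_{\delta\to 0^+}\frac{\et(x^\ast,\delta)}{\tilde{\psi}(z,\delta)}>0$. If $z \in M$ this is exactly condition S1 for $\psi_S$. If $z = x_c$, then $\tilde{\psi}(x_c,\delta)=\et(x_c,\delta)$, and using the assumed equivalence $\et(x_c,\delta)\overset{\{x_c\},\{x\}}{\approx}\et(x,\delta)$ — which gives positive constants so that $\et(x_c,\delta)\le c\,\et(x,\delta)$ on some $(0,d)$ — I get $\limsup_{\delta\to 0^+}\frac{\et(x^\ast,\delta)}{\et(x_c,\delta)} \ge \frac{1}{c}\limsup_{\delta\to 0^+}\frac{\et(x^\ast,\delta)}{\et(x,\delta)}>0$, again by S1 for $\psi_S$ applied at $x \in M$ (recalling $\psi_S \overset{M}{\approx}\et$, so $\et(x,\delta)\le c' \psi_S(x,\delta)$). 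For S2, invariance of $\tilde{\psi}$ over $\tilde{M}$: by Remark \ref{rem:MM} it suffices to check $\tilde{\psi}\overset{\tilde{M},\tilde{M}}{\preceq}\tilde{\psi}$, i.e., to bound $\tilde{\psi}(z_1,\delta)$ by a constant times $\tilde{\psi}(z_2,\delta)$ for all $z_1,z_2 \in \tilde{M}$. When both lie in $M$ this is invariance of $\psi_S$ (condition S2). When one or both equal $x_c$, I combine invariance of $\psi_S$ on $M$ with the equivalence $\et \overset{\{x_c\},\{x\}}{\approx}\et$ and the equivalence $\psi_S\overset{M,M}{\approx}\et$ (Corollary \ref{cor:sat-equiverror}): each "crossing" from $x_c$ to a point of $M$ costs only a multiplicative constant, so all four cases reduce to a finite chain of such constant-factor comparisons.

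The main obstacle, and the only place the hypothesis $\et \overset{\{x_c\},\{x\}}{\nprec}\et$ is used, is subtle: a priori the negation of strict precedence does not by itself give a two-sided bound between $\et(x_c,\delta)$ and $\et(x,\delta)$, so I expect the proof to first dispose of the possibility $\et \overset{\{x_c\},\{x\}}{\preceq}\et$ failing. Indeed, if we are not in the equivalence case, then either $\et\overset{\{x_c\},\{x\}}{\npreceq}\et$ or $\et\overset{\{x\},\{x_c\}}{\npreceq}\et$; the hypothesis $\et \overset{\{x_c\},\{x\}}{\nprec}\et$ rules out the strict-precedence refinement of the first, and the delicate step is to check that the remaining configuration still lets the splicing argument go through (or is itself already excluded), so that the only surviving alternative to the desired conclusion $\et\overset{\{x_c\},\{x\}}{\napprox}\et$ is precisely the equivalence case that S3 forbids. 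Handling this case analysis cleanly — rather than the routine constant-chasing in S1 and S2 — is where the real work lies.
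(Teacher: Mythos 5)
Your splicing construction is exactly the paper's own proof: assume $\et \overset{\{x_c\},\{x\}}{\approx}\et$ for some $x\in M$, $x_c\in M^c$, extend the saturation function $\psi_S$ to $\tilde{M}=M\cup\{x_c\}$ by putting $\et(x_c,\cdot)$ at the new point, check S1 and S2 on $\tilde{M}$, and contradict S3. The only real difference is the verification of S1 at $x_c$: the paper first proves $\limsup_{\delta\to 0^+}\et(x^\ast,\delta)/\et(x,\delta)>0$ by splitting into $x^\ast\in M$ (invariance of $\et$ over $M$, Remark \ref{obs:invariancia-et}) and $x^\ast\in M^c$ (the hypothesis $\et\overset{\{x^\ast\},\{x\}}{\nprec}\et$), and then multiplies by the equivalence; you instead chain $\et(x_c,\delta)\le k\,\et(x,\delta)\le k\,p(x)\,\psi_S(x,\delta)$ and invoke S1 of $\psi_S$ directly, which is correct and, incidentally, never uses the $\nprec$ hypothesis at all (the paper's variant does). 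This also makes your closing paragraph moot: the conclusion of the lemma is only that $\approx$ fails across the boundary, so the contradiction argument that assumes $\approx$ at a single pair \emph{is} the entire proof; the two-sided bound you need is supplied by that assumed equivalence, not by $\nprec$, and there is no ``remaining configuration'' ($\npreceq$ in either direction is already the desired conclusion $\napprox$, so nothing further must be excluded). Hence the ``delicate step'' you postpone does not exist, and your first two paragraphs already constitute a complete argument matching the paper's.
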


\begin{proof}
Since $\{R_\alpha\}$ has saturation function on $M$, it follows
from Remark \ref{obs:invariancia-et} that $\et$ is invariant over
$M$. Suppose that $\et \overset{\{x_c\}, \{x\}}{\nprec}\et$ for
every $x \in M$, $x_c \in M^c$ and that there exist $\tilde{x} \in
M$, $\tilde{x}_c \in M^c$ such that
\begin{equation}\label{eq:3}
\et \overset{\{\tilde{x}_c\}, \{\tilde{x}\}}{\approx}\et.
\end{equation}
Then,
\begin{equation}\label{eq:4}
\underset{\delta
\rightarrow0^+}{\limsup}\,\frac{\et(\tilde{x},\delta)}{\et(\tilde{x}_c,\delta)}>0.
\end{equation}
Define $\tilde{M}\doteq M \cup \{\tilde{x}_c\}$ and
\begin{equation*}
\tilde{\psi}(x,\delta)\doteq \left\{%
\begin{array}{ll}
    \psi(x,\delta), & \hbox{if $x\in M$} \\
    \et(x,\delta), & \hbox{if $x=\tilde{x}_c $,} \\
\end{array}%
\right.
\end{equation*}
where $\psi$ is a saturation function of $\{R_\alpha\}$ on $M$. We
will show next that $\tilde{\psi}$ is saturation function on
$\tilde{M}$. Clearly, $\tilde{\psi}$ is upper bound of convergence
for the total error on $\tilde{M}$, i.e., $\tilde{\psi}\in
\mathcal{U}_{\tilde{M}}(\et)$ and since $\psi$ is saturation on
$M$, it follows that $\tilde{\psi}(x,\delta)$ satisfies condition
\textit{S1} for all $x\in M$. We will now check that
$\tilde{\psi}(\tilde{x}_c,\delta)$ also satisfies \textit{S1}.
Since $\tilde{x}\in M$ it follows that
\begin{equation} \label{eq:8}
\underset{\delta
\rightarrow0^+}{\limsup}\,\frac{\et(x^\ast,\delta)}{\et(\tilde{x},\delta)}>0\quad
\forall \;x^\ast \in X,\, x^\ast \neq 0.
\end{equation}
If $x^\ast \in M$, the above inequality follows from the fact that
$\et$ is invariant over $M$ and if $x^\ast \in M^c$, it is a
consequence of the fact that $\et \overset{\{x^\ast\},
\{\tilde{x}\}}{\nprec}\et$.

Then, for every  $x^\ast \in X$, $x^\ast \neq0$ we have that
\begin{equation*}
\underset{\delta \rightarrow
0^+}{\limsup}\;\frac{\et(x^\ast,\delta)}{\tilde{\psi}(\tilde{x}_c,\delta)}=\underset{\delta
\rightarrow
0^+}{\limsup}\;\frac{\et(x^\ast,\delta)}{\et(\tilde{x},\delta)}\;\frac{\et(\tilde{x},\delta)}{\tilde{\psi}
(\tilde{x}_c,\delta)}>0
\end{equation*}
by virtue of (\ref{eq:4}) and (\ref{eq:8}). Thus,
$\tilde{\psi}(x,\delta)$ satisfies \textit{S1} for every $x\in
\tilde{M}.$

We will now check that $\tilde{\psi}$ satisfies \textit{S2} on
$\tilde{M}$. Since $\psi$ is saturation function of $\{R_\alpha\}$
on $M$, and $\tilde\psi|_M=\psi$ we have that $\tilde{\psi}$ is
invariant over $M$. It remains to prove that $\tilde{\psi}
\overset{\{\tilde{x}_c\}, M}{\approx}\tilde{\psi}$, i.e. that $\et
\overset{\{\tilde{x}_c\}, M}{\approx}\psi$. But this is an
immediate consequence of (\ref{eq:3}), of Corollary
\ref{cor:sat-equiverror} which implies that $\psi
\overset{M}{\approx}\et$ and the fact that $\psi$ is invariant
over $M$.

Thus, we have shown that $\tilde{\psi}$ is a proper extension of
$\psi$ satisfying \textit{S1} and \textit{S2} on $\tilde{M}$,
which then implies that $\psi$ does not satisfy condition
\textit{S3}. This contradicts the fact that $\psi$ is saturation
function of $\{R_\alpha\}$ on $M$. Therefore, for every $x\in M$,
$x_c \in M^c$ there must hold that $\et \overset{\{x_c\},
\{x\}}{\napprox}\et$.\hfill
\end{proof}

\smallskip
\begin{thm}\label{teo:caract-sat} (Necessary and sufficient condition
for the existence of saturation.) A regularization method
$\{R_{\alpha}\}$ has saturation function if and only if there
exists $M \subset X$ $(M\neq \{0\}, M\neq\emptyset)$ such that
$\et$ is invariant over $M$ and $M$ is optimal for $\et$. In this
case
$\mathcal{E}^{\text{\rm{tot}}}_M(x,\delta)\doteq\et(x,\delta)$ for
$x \in M$ and $\delta>0$ is saturation function of
$\{R_{\alpha}\}$ on $M$.
\end{thm}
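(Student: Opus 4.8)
The plan is to prove the two implications of this biconditional separately, leaning heavily on the machinery already developed. For the \emph{necessity} direction, suppose $\{R_\alpha\}$ has a saturation function $\psi_S$ on a set $M_S$. Then Remark \ref{obs:invariancia-et} already gives that $\et$ is invariant over $M_S$, so it only remains to show that $M_S$ is optimal for $\et$ in the sense of condition \textit{C2}. Fix $x\in M_S$ and $x_c\in M_S^c$. I must rule out both $\et\overset{\{x_c\},\{x\}}{\prec}\et$ and $\et\overset{\{x_c\},\{x\}}{\approx}\et$. The first is easy: by Corollary \ref{cor:sat-equiverror}, $\psi_S\overset{M_S}{\approx}\et$, so a strict precedence $\et\overset{\{x_c\},\{x\}}{\prec}\et$ would translate into $\psi_S$ (equivalently, any upper bound on $\{x_c\}$) strictly preceding $\psi_S$ at $x$, contradicting condition \textit{S1} as recorded in Remark \ref{obs:sat} (which says no $\psi\in\mathcal{U}_M(\et)$ can satisfy $\psi\overset{M,M_S}{\prec}\psi_S$). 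So $\et\overset{\{x_c\},\{x\}}{\nprec}\et$ for every such pair; but then Lemma \ref{lem:noequiv} applies verbatim with $M=M_S$ and yields $\et\overset{\{x_c\},\{x\}}{\napprox}\et$ for all $x\in M_S$, $x_c\in M_S^c$. Together these give condition \textit{C2}, i.e.\ $M_S\in\mathcal{O}(\et)$, completing necessity.

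For the \emph{sufficiency} direction, suppose $M\subset X$ is such that $\et$ is invariant over $M$ and $M\in\mathcal{O}(\et)$. The candidate saturation function is $\psi_S\doteq\mathcal{E}^{\text{tot}}_M$, the restriction of $\et$ to $M\times(0,a)$. Clearly $\psi_S\in\mathcal{U}_M(\et)$ since $\et\overset{M}{\approx}\et$ trivially, and $\psi_S$ satisfies \textit{S1} because invariance of $\et$ over $M$ handles the case $x^\ast\in M$, while for $x^\ast\in M^c$ condition \textit{C2} gives $\et\overset{\{x^\ast\},\{x\}}{\nprec}\et$, hence $\limsup_{\delta\to0^+}\et(x^\ast,\delta)/\psi_S(x,\delta)>0$; $\psi_S$ satisfies \textit{S2} by hypothesis. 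The work is in verifying \textit{S3}: that no proper extension exists. Suppose, for contradiction, that there are $\tilde M\supsetneqq M$ and $\tilde\psi\in\mathcal{U}_{\tilde M}(\et)$ satisfying \textit{S1} and \textit{S2} on $\tilde M$. Pick $x_c\in\tilde M\setminus M$ and any $x\in M$. By Lemma \ref{lem:sat-opt}, $\tilde\psi\in\mathcal{U}_{\tilde M}^{\,\text{opt}}(\et)$, so by Corollary \ref{cor:um-opt}\,\textbf{i)} $\tilde\psi\overset{\tilde M}{\approx}\et$; combined with \textit{S2} ($\tilde\psi\overset{\tilde M,\tilde M}{\approx}\tilde\psi$) and Lemma \ref{lem:transit}, this forces $\et\overset{\tilde M,\tilde M}{\approx}\et$, and in particular $\et\overset{\{x_c\},\{x\}}{\approx}\et$. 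But $x\in M$, $x_c\in M^c$, so this directly contradicts condition \textit{C2} for $M$. Hence no such extension exists and \textit{S3} holds, so $\psi_S$ is a saturation function of $\{R_\alpha\}$ on $M$.

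The main obstacle is the \textit{S3} verification in the sufficiency direction: one must be careful that \textit{C2}, being a \emph{pointwise} statement over all pairs $(x,x_c)\in M\times M^c$, is exactly strong enough to block any single-point extension, and that the combination of optimality (Corollary \ref{cor:um-opt}) with transitivity of invariance (Lemma \ref{lem:transit}) genuinely upgrades the bare equivalence $\tilde\psi\overset{\tilde M}{\approx}\et$ into the cross-set equivalence $\et\overset{\{x_c\},\{x\}}{\approx}\et$ needed to contradict \textit{C2}. A secondary point requiring care is that an arbitrary proper extension $\tilde M$ need not contain $M$ as the "first" stage of a chain — but since \textit{S3} only asks about $\tilde M\supsetneqq M_S$, picking any witness point $x_c\in\tilde M\setminus M$ suffices, and the argument never needs more than one such point.
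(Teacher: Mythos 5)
Your proposal is correct and follows essentially the same route as the paper: necessity via Remark \ref{obs:invariancia-et}, the $\et \overset{\{x_c\},\{x\}}{\nprec}\et$ estimate from condition \textit{S1} together with $\et\overset{M_S}{\preceq}\psi_S$, and Lemma \ref{lem:noequiv}; sufficiency by taking the restriction of $\et$, getting \textit{S1} from invariance plus \textit{C2}, and \textit{S3} by contradiction through Lemma \ref{lem:sat-opt} and Corollary \ref{cor:um-opt}. The only (harmless) deviation is in the \textit{S3} step, where you invoke Lemma \ref{lem:transit} to obtain $\et\overset{\tilde M,\tilde M}{\approx}\et$ and never use that $\tilde{\psi}$ coincides with $\mathcal{E}^{\text{tot}}_M$ on $M$, whereas the paper uses that coincidence to get $\et\overset{\{\tilde{x}\},M}{\approx}\et$ directly; your variant is equally valid (indeed it rules out a slightly larger class of extensions) and reaches the same contradiction with condition \textit{C2}.
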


\smallskip
\begin{proof}
Suppose that $\{R_{\alpha}\}$ has saturation function $\psi$ on
$M$. Then it follows from Remark \ref{obs:invariancia-et} that
$\et$ is invariant over $M$.

Let us now check that $M$ is optimal for $\et$. Let $x \in M$ and
$x_c \in M^c$. We will first show that  $\et \overset{\{x_c\},
\{x\}}{\nprec}\et$. Since $\psi \in \mathcal{U}_M(\et)$ and $x\in
M$, there exist positive constants $d$ and $k_x$ such that
$\et(x,\delta) \leq k_x \psi(x,\delta)$ for every $\delta \in
(0,d).$ Then
\begin{equation*}
\underset{\delta
\rightarrow0^+}{\limsup}\,\frac{\et(x_c,\delta)}{\et(x,\delta)}\geq
\underset{\delta
\rightarrow0^+}{\limsup}\,\frac{\et(x_c,\delta)}{k_x
\psi(x,\delta)}>0,
\end{equation*}
where the last inequality follows from the fact that $\psi$
satisfies condition \textit{S1} on $M$. Therefore $\forall \;x\in
M$, $\forall \;x_c \in M^c$, $\et \overset{\{x_c\},
\{x\}}{\nprec}\et$. This condition together with the fact that
$\et$ is invariant over $M$ implies, by virtue of Lemma
\ref{lem:noequiv}, that $\forall \;x\in M$, $\forall \;x_c \in
M^c$, $\et \overset{\{x_c\}, \{x\}}{\napprox}\et$. We have thus
shown that $M$ is optimal for $\et$.

Conversely, suppose that there exists $M \subset X$ $(M\neq \{0\},
M\neq\emptyset)$ such that $\et$ is invariant over $M$ and $M$ is
optimal for $\et$ and define
$\mathcal{E}^{\text{tot}}_M(x,\delta)\doteq\et(x,\delta)$ for $x
\in M$ and $\delta>0$. We will show that
$\mathcal{E}^{\text{tot}}_M$ is saturation function of
$\{R_{\alpha}\}$ on $M$. Clearly, $\mathcal{E}^{\text{tot}}_M \in
\mathcal{U}_M(\et)$ and since by hypothesis
$\mathcal{E}^{\text{tot}}_M$ is invariant over $M$, it only
remains to be shown that $\mathcal{E}^{\text{tot}}_M$ satisfies
conditions \textit{S1} and \textit{S3}.

In order to prove \textit{S1}, let $x^\ast \in X$, $x^\ast \neq 0$
and $x\in M.$ If $x^\ast \in M$, then the invariance of
$\mathcal{E}^{\text{tot}}_M$ over $M$ implies that
$\mathcal{E}^{\text{tot}}_M \overset{\{x^\ast\},\{x\}}{\approx}
\mathcal{E}^{\text{tot}}_M$ and therefore
\begin{equation}\label{eq:9}
\underset{\delta
\rightarrow0^+}{\limsup}\,\frac{\et(x^\ast,\delta)}{\mathcal{E}^{\text{tot}}_M(x,\delta)}=\underset{\delta
\rightarrow0^+}{\limsup}\,\frac{\mathcal{E}^{\text{tot}}_M(x^\ast,\delta)}{\mathcal{E}^{\text{tot}}_M(x,\delta)}>0.
\end{equation}
On the other hand, if $x^\ast \in M^c$, the previous limit is also
positive due to the fact that $\et
\overset{\{x^\ast\},\{x\}}{\nprec}\mathcal{E}^{\text{tot}}_M$ (by
condition \textit{C2}) because $M$ is optimal for $\et$. Then,
$\mathcal{E}^{\text{tot}}_M$ satisfies condition \textit{S1}.

Finally, suppose that $\mathcal{E}^{\text{tot}}_M$ does not
satisfy condition \textit{S3}, i.e. there exist
$\tilde{M}\supsetneqq M$ and $\tilde{\psi} \in
\mathcal{U}_{\tilde{M}}(\et)$ such that $\tilde{\psi}$ is a proper
extension of $\mathcal{E}^{\text{tot}}_M$ satisfying conditions
\textit{S1} and \textit{S2} on $\tilde{M}$. Let $\tilde{x}\in
\tilde{M} \setminus M$, then the invariance of $\tilde{\psi}$ over
$\tilde{M}$ implies that $\tilde{\psi}
\overset{\{\tilde{x}\},M}{\approx}\tilde{\psi}$ and since
$\tilde{\psi}$ coincides with $\mathcal{E}^{\text{tot}}_M$ on $M$,
it follows that
\begin{equation}\label{eq:15}
\tilde{\psi}
\overset{\{\tilde{x}\},M}{\approx}\mathcal{E}^{\text{tot}}_M.
\end{equation}
Now since $\tilde{\psi} \in \mathcal{U}_{\tilde{M}}(\et)$
satisfies \textit{S1} on $\tilde{M}$, Lemma \ref{lem:sat-opt}
implies that $\tilde{\psi}\in
\mathcal{U}_{\tilde{M}}^{\,opt}(\et)$. Then, by virtue of
Corollary \ref{cor:um-opt}.\textit{i} we have that $\et
\overset{\tilde{M}}{\approx}\tilde{\psi}$. In particular, $\et
\overset{\{\tilde{x}\}}{\approx}\tilde{\psi}$, which, together
with (\ref{eq:15}) imply that $\et
\overset{\{\tilde{x}\},M}{\approx}\mathcal{E}^{\text{tot}}_M$,
that is, $\et \overset{\{\tilde{x}\},M}{\approx}\et$. But since
$\tilde{x} \in M^c$, this equivalence contradicts the fact that
$M$ is optimal for $\et$. Therefore, $\mathcal{E}^{\text{tot}}_M$
must satisfy condition \textit{S3} and, as a consequence, it is
saturation function of $\{R_{\alpha}\}$ on $M$. \hfill
\end{proof}

\smallskip
\begin{rem}
From the previous theorem we conclude that a saturation function
of a regularization method is an optimal upper bound of
convergence for the total error, invariant and without proper
extensions.

Note that a saturation function must be optimal in two senses. In
fact, if $\psi$ is saturation function on $M$, then $M$ is optimal
for $\psi$ and $\psi$ is optimal (upper bound) for the total error
of $\{R_\alpha\}$ on $M$. Moreover, $M$ and $\psi$ (modulus $M,M$
equivalence) are uniquely determined. In fact, if the domain $M$
is changed, then $M$ is no longer optimal for $\psi$ and if the
function $\psi$ is changed, even at a single point of $M$, in such
a way that $\psi$ is not invariant on $M$, then $\psi$ it is no
longer an optimal upper bound. Suppose that at a point $x_0\in M$,
we redefine $\psi$ as $\tilde{\psi}(x_0,\delta)$, where
$\tilde{\psi}\in \mathcal{F}_M$. If
$\tilde{\psi}\overset{\{x_0\}}{\prec}\psi$, then $\psi$ is no
longer an upper bound for the total error of $\{R_\alpha\}$ on $M$
and if $\psi\overset{\{x_0\}}{\prec}\tilde{\psi}$ then $\psi$ is
upper bound but it is not optimal. Thus for every $\tilde M\subset
M$ and for every $\tilde\psi\in \mathcal{F}_{\tilde M}$, if $\psi$
and $\tilde\psi$ are comparable on $\tilde M$ then $\psi
\overset{\tilde M,\tilde M}{\approx}\tilde{\psi}$ must hold.
\end{rem}

\section{Saturation for Spectral Regularization Methods} \label{sec:4}
The objective of this section is to apply the theory previously
developed to the case of spectral regularization methods. Further,
we show that this theory is consistent with previously existing
results about optimal convergence of spectral regularization
methods.

Let $\{E_\lambda\}_{\lambda \in \R}$ be the spectral family
associated to the linear selfadjoint operator $T^\ast T$ and
$\{g_\alpha\}_{\alpha \in (0, \alpha_0)}$ a parametric family of
functions  $g_\alpha:[0,\norm{T}^2]\rightarrow \R$ for $\alpha \in
(0,\alpha_0)$, and consider the following standing hypotheses:

\textit{H1}.\; For every $\alpha \in (0,\alpha_0)$ the function
$g_\alpha$ is piecewise continuous on $[0,\norm{T}^2]$.

\textit{H2}.\; There exists a constant $C>0$ (independent of
$\alpha$) such that $\abs{\lambda g_\alpha (\lambda)}\leq C$ for
every $\lambda \in [0,\norm{T}^2]$.

\textit{H3}.\; For every  $\lambda \in (0,\norm{T}^2]$, there
exists $\underset{\alpha \rightarrow 0^+}{\lim}
g_\alpha(\lambda)=\frac{1}{\lambda}$.

\textit{H4}.\; $G_\alpha \doteq
\norm{g_\alpha(\cdot)}_\infty=O\left(\frac{1}{\sqrt{\alpha}}\right)$
for $\alpha \rightarrow 0^+$.
\smallskip

If $\{g_\alpha\}_{\alpha \in (0, \alpha_0)}$ satisfies hypotheses
\textit{H1-H3}, then (see \cite{bookEHN}, Theorem 4.1) the
collection of operators $\{R_\alpha\}_{\alpha \in (0, \alpha_0)}$,
where
\begin{equation}\label{eq:Ralfa-galfa}
    R_\alpha\doteq \int g_\alpha(\lambda)\, dE_\lambda \,T^\ast
    =g_\alpha(T^\ast T)T^\ast,
\end{equation}
is a family of regularization operators for $T^\dag$. In this case
we say that $\{R_\alpha\}_{\alpha \in (0, \alpha_0)}$ is a family
of spectral regularization operators for $Tx=y$.

Next, we recall the classical definition of qualification for a
family of spectral regularization operators.

\smallskip
\begin{defn}
Let $\{R_\alpha\}_{\alpha \in (0,\alpha_0)}$ be the family of
spectral regularization operators for $Tx=y$ generated by the
family of functions $\{g_\alpha\}_{\alpha \in (0,\alpha_0)}$,
$r_\alpha(\lambda)\doteq 1-\lambda g_\alpha(\lambda)$,
$0<\alpha<\alpha_0$, $0\le\lambda\le \|T\|^2$, and let us denote
with $\mathcal{I}(g_\alpha)$ the set
$$\mathcal{I}(g_\alpha)\doteq\{\mu\geq 0:\exists\; k>0
\textrm{ and }\lambda^\mu\abs{r_\alpha(\lambda)}\leq
k\,\alpha^\mu\; \forall\; \lambda \in [0,\norm{T}^2],\; \forall\,
\alpha \in (0,\alpha_0)\}.
$$

The order of the classical qualification of $\{R_\alpha\}_{\alpha
\in (0,\alpha_0)}$ is defined to be $\mu_0 \doteq \underset{\mu
\in \mathcal{I}(g_\alpha)}{\sup}\, \mu$ and we say that
$\{R_\alpha\}_{\alpha \in (0,\alpha_0)}$ has classical
qualification of order $\mu_0.$
\end{defn}

\smallskip
\begin{rem}
Note that by virtue of \textit{H2}, $0 \in \mathcal{I}(g_\alpha)$
and the order $\mu_0$ of the classical qualification of a
regularization method is always nonnegative (it can be equal to  0
or $+ \infty$).
\end{rem}

\subsection{Spectral Methods with Classical Qualification of Finite Positive Order}

We start by considering first the case of spectral methods for
which $0<\mu_0<\infty$. For these methods we will first show the
existence of certain upper bounds of convergence and then we will
show that they saturate. We will also characterize their
saturation functions and saturation sets.

\begin{lem}\label{lem:cotasup-xmu}
Suppose that $\{g_\alpha\}_{\alpha\in (0,\alpha_0)}$ satisfies the
hypotheses \textit{H1-H4}. If the family of regularization
operators $\{R_\alpha\}_{\alpha\in (0,\alpha_0)}$, with $R_\alpha$
defined as in (\ref{eq:Ralfa-galfa}), has classical qualification
of order $\mu_0$, $0<\mu_0<+\infty$, then
$\psi_{\mu_0}(x,\delta)\doteq \delta^{\frac{2\mu_0}{2\mu_0+1}}$,
for $x \in X_{\mu_0}\doteq \mathcal{R}((T^\ast
T)^{\mu_0})\setminus \{0\}$ and $\delta
>0$, is upper bound of convergence for the total error of
$\{R_\alpha\}_{\alpha\in (0,\alpha_0)}$ on $X_{\mu_0}$, that is,
$\psi_{\mu_0} \in \mathcal{U}_{X_{\mu_0}}(\et)$.
\end{lem}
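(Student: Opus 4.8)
The strategy is the classical one for establishing convergence rates of spectral regularization methods under a source condition, adapted to the "total error'' formulation introduced above. First I would fix $x\in X_{\mu_0}$, so that $x=(T^*T)^{\mu_0}w$ for some $w\in X$, and estimate, for an arbitrary noise level $\delta>0$ and an arbitrary $y^\delta$ with $\|Tx-y^\delta\|\le\delta$, the quantity $\|R_\alpha y^\delta - x\|$. I would split this in the usual way into an approximation (bias) term and a data-propagation (variance) term:
\begin{equation*}
\|R_\alpha y^\delta - x\| \;\le\; \|R_\alpha Tx - x\| \;+\; \|R_\alpha(y^\delta - Tx)\|.
\end{equation*}
Using the spectral representation $R_\alpha = g_\alpha(T^*T)T^*$ and $r_\alpha(\lambda)=1-\lambda g_\alpha(\lambda)$, the first term equals $\|r_\alpha(T^*T)x\| = \|r_\alpha(T^*T)(T^*T)^{\mu_0}w\|$, which by the functional calculus is bounded by $\big(\sup_{\lambda\in[0,\|T\|^2]}\lambda^{\mu_0}|r_\alpha(\lambda)|\big)\|w\|$. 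Since $\{R_\alpha\}$ has classical qualification of order $\mu_0$ and $\mu_0\in\mathcal{I}(g_\alpha)$ (here one must note that the supremum defining $\mu_0$ is attained, or work with $\mu_0$ itself lying in the set — this is the one place deserving care, and for finite positive $\mu_0$ it is standard that $\mu_0\in\mathcal I(g_\alpha)$), this term is $\le k\,\alpha^{\mu_0}\|w\|$. For the second term, $\|R_\alpha(y^\delta-Tx)\|\le \|R_\alpha\|\,\delta$, and the standard estimate $\|R_\alpha\|\le \sqrt{C}\,G_\alpha$ from \textit{H2} and \textit{H4} (or more directly $\|g_\alpha(T^*T)T^*\|\le\|g_\alpha\|_\infty^{1/2}\sup|\lambda g_\alpha(\lambda)|^{1/2}\le\sqrt{C}\,G_\alpha$) together with \textit{H4} gives $\|R_\alpha\|\le c\,\alpha^{-1/2}$ for $\alpha$ small. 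Hence
\begin{equation*}
\sup_{y^\delta\in\overline{B_\delta(Tx)}}\|R_\alpha y^\delta - x\| \;\le\; k\|w\|\,\alpha^{\mu_0} + c\,\alpha^{-1/2}\delta.
\end{equation*}

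Next I would take the infimum over $\alpha\in(0,\alpha_0)$ of the right-hand side, which is exactly $\et(x,\delta)$ by definition. The function $\alpha\mapsto A\alpha^{\mu_0}+B\alpha^{-1/2}\delta$ (with $A=k\|w\|$, $B=c$) is minimized, up to a fixed multiplicative constant depending only on $\mu_0$, at $\alpha_* \sim (\delta/A)^{\frac{2}{2\mu_0+1}}$, yielding a minimum value of order $A^{\frac{1}{2\mu_0+1}}\delta^{\frac{2\mu_0}{2\mu_0+1}}$. One checks that for $\delta$ small enough this optimal $\alpha_*$ indeed lies in $(0,\alpha_0)$, so it is an admissible choice in the infimum. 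Therefore there is a constant $p(x)>0$ (one may take $p(x) = \tilde c\,(k\|w\|)^{\frac{1}{2\mu_0+1}}$, depending on $x$ through $w$, but not on $\delta$) and an $r>0$ such that
\begin{equation*}
\et(x,\delta) \;\le\; p(x)\,\delta^{\frac{2\mu_0}{2\mu_0+1}} \;=\; p(x)\,\psi_{\mu_0}(x,\delta)
\quad\text{for all } \delta\in(0,r).
\end{equation*}

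Finally I would verify that $\psi_{\mu_0}\in\mathcal{F}_{X_{\mu_0}}$: it is defined on $X_{\mu_0}\times(0,\infty)$ with values in $(0,\infty)$, it is continuous and strictly increasing in $\delta$ (since $\tfrac{2\mu_0}{2\mu_0+1}>0$), and $\psi_{\mu_0}(x,\delta)\to 0$ as $\delta\to0^+$ for each fixed $x$; these are precisely conditions 1 and 2 defining $\mathcal F_{X_{\mu_0}}$. Combined with the displayed inequality and Definition \ref{def-sat}(i), this gives $\et\overset{X_{\mu_0}}{\preceq}\psi_{\mu_0}$, i.e. $\psi_{\mu_0}\in\mathcal{U}_{X_{\mu_0}}(\et)$, as claimed. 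I expect the only genuinely delicate point to be the justification that $\mu_0\in\mathcal I(g_\alpha)$ (so that the bias bound $\lambda^{\mu_0}|r_\alpha(\lambda)|\le k\alpha^{\mu_0}$ is actually available at the supremal order rather than merely for $\mu<\mu_0$); everything else is the routine balancing of two monotone power terms in $\alpha$ and a bookkeeping check that the optimizing parameter is admissible for small $\delta$.
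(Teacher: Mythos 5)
Your argument is correct, and in substance it is the same proof as the paper's, just unpacked: the paper's own proof is essentially a citation --- from \textit{H4} one gets $G_\alpha=o(1/\alpha)$, and then Corollary 4.4 and Remark 4.5 of Engl--Hanke--Neubauer are invoked to produce an \textit{a-priori} parameter choice $\alpha^\ast(\delta)$ of optimal order on $X_{\mu_0}$, after which $\et(x,\delta)$ is bounded by the supremum at $\alpha=\alpha^\ast(\delta)$, since the infimum over $\alpha$ is at most the value at any particular admissible choice. You instead re-derive the content of that cited result: the bias--variance split, the source-condition bound $\norm{r_\alpha(T^\ast T)(T^\ast T)^{\mu_0}w}\le k\,\alpha^{\mu_0}\norm{w}$ coming from the qualification, the propagation bound $\norm{R_\alpha}=O(\alpha^{-1/2})$ from \textit{H2} and \textit{H4}, and the balancing $\alpha_\ast\sim\delta^{2/(2\mu_0+1)}$ (admissible for small $\delta$). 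What the paper's route buys is brevity; what yours buys is self-containedness and visibility of where each hypothesis and the exponent $\tfrac{2\mu_0}{2\mu_0+1}$ enter. Two remarks. First, the point you flag yourself: your parenthetical that $\mu_0\in\mathcal{I}(g_\alpha)$ is automatic for finite positive $\mu_0$ is not a theorem under the paper's definition of $\mu_0$ as $\sup\mathcal{I}(g_\alpha)$; the supremum need not be attained (one can have $\lambda^{\mu}\abs{r_\alpha(\lambda)}\le k_\mu\alpha^{\mu}$ for every $\mu<\mu_0$ while $\sup_\lambda\lambda^{\mu_0}\abs{r_\alpha(\lambda)}$ grows like $\alpha^{\mu_0}\ln(1/\alpha)$), so strictly one needs the inequality at $\mu=\mu_0$ itself. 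The paper makes the same tacit identification, since the EHN notion of qualification of order $\mu_0$ builds the inequality at $\mu_0$ into its definition; so your proof is on equal footing with the paper's, but this should be stated as a reading of the hypothesis rather than as a known fact. Second, a cosmetic slip: the interpolation chain gives $\norm{R_\alpha}\le\sqrt{C\,G_\alpha}$, not $\sqrt{C}\,G_\alpha$; under \textit{H4} either expression is $O(\alpha^{-1/2})$, so the propagation estimate and the rest of the argument are unaffected.
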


\begin{proof}
Since $\{g_\alpha\}$ satisfies hypothesis \textit{H4}, we have
that $G_\alpha =O\left(\frac{1}{\sqrt{\alpha}}\right)$ when
$\alpha \rightarrow 0^+$ and therefore $G_\alpha
=o(\frac{1}{\alpha})$ when $\alpha \rightarrow 0^+.$ From this and
from the fact that $\{g_\alpha\}$ satisfies hypothesis
\textit{H1-H3} and $\{R_\alpha\}$ has classical qualification of
order $\mu_0$, $0<\mu_0<+\infty$, it follows that (see
\cite{bookEHN}, Corollary 4.4 and Remark 4.5 therein) there exists
an \textit{a-priori} parameter choice rule
$\alpha^\ast:\R^+\rightarrow (0,\alpha_0)$ such that the
regularization method  $(R_{\alpha},\alpha^\ast)$ is of optimal
order on $X_{\mu_{0}}$, that is, for every $x \in X_{\mu_0}$ there
exists $k(x)>0$ such that for every $\delta>0$,
\begin{equation*}
\underset{y^\delta \in \overline{B_\delta(Tx)}}{\sup}
\norm{R_{\alpha^\ast(\delta)} y^\delta - x} \leq k(x)
\,\delta^{\frac{2\mu_{0}}{2\mu_{0}+1}}.
\end{equation*}
Then
\begin{equation*}
    \underset{\alpha \in (0,\alpha_0)}{\inf}\; \underset{y^\delta \in
\overline{B_\delta(Tx)}}{\sup} \norm{R_\alpha y^\delta -
    x} \leq k(x) \,\delta^{\frac{2\mu_{0}}{2\mu_{0}+1}},
\end{equation*}
that is, $\et(x,\delta)\le k(x)\psi_{\mu_0}(\delta)$. Thus $\et
\overset{X_{\mu_0}}{\preceq}\psi_{\mu_0}$ and therefore
$\psi_{\mu_0} \in \mathcal{U}_{X_{\mu_0}}(\et)$. $ $
\hphantom{xxxxx} \hfill
\end{proof}

\smallskip
\begin{thm}\label{teo:sat-espectral}(Saturation for families of spectral
regularization operators with classical qualification of finite
positive order.)

Suppose that $\{g_\alpha\}_{\alpha>0}$ satisfies hypotheses
\textit{H1-H4} and let $r_\alpha(\lambda)\doteq 1-\lambda
g_\alpha(\lambda)$. Suppose further that:

\textbf{i)} The spectrum of $T^\ast T$ has $\lambda=0$ as
accumulation point.

 \textbf{ii)} There exist positive constants  $\gamma_1,\gamma_2,\lambda_1, c_1$, with $\lambda_1 \leq
\norm{T}^2$ and $c_1>1$ such that

\quad \textbf{a)} $0 \leq r_\alpha(\lambda)\leq 1$, $\alpha>0$,
$0\leq \lambda\leq \lambda_1$;

\quad \textbf{b)} $r_\alpha(\lambda)\geq\gamma_1$,
$0\leq\lambda<\alpha\leq \lambda_1$;

\quad \textbf{c)} $\abs{r_\alpha(\lambda)}$ is monotone increasing
with respect to $\alpha$ for $\lambda\in (0,\norm{T}^2]$;

\quad \textbf{d)} $g_\alpha(c_1\alpha)\geq
\frac{\gamma_2}{\alpha}$, $0<c_1\alpha\leq \lambda_1$ and

\quad \textbf{e)} $g_\alpha(\lambda)\geq
g_\alpha(\tilde{\lambda})$, for $0<\alpha\leq\lambda\leq
\tilde{\lambda}\leq \lambda_1$.

There exist constants $\gamma,c>0$ such that:

\textbf{iii)} The family of regularization operators
$\{R_\alpha\}_{\alpha\in (0,\alpha_0)}$ defined by
(\ref{eq:Ralfa-galfa}), where $\alpha_0\doteq\min\{\lambda_1,
\frac{\lambda_1}{c}\}$, has classical qualification of order
$\mu_0, \;0<\mu_0<+\infty$.

\textbf{iv)}
\begin{equation}\label{cond-450}
    \left(\frac{\lambda}{\alpha}\right)^{\mu_0}\abs{r_\alpha(\lambda)}\geq
    \gamma, \quad \textrm{for every}\; 0<c\alpha\leq \lambda \leq
    \norm{T}^2.
\end{equation}

Then $\psi_{\mu_0}(x,\delta)\doteq
\delta^{\frac{2\mu_0}{2\mu_0+1}}$ for $x \in X_{\mu_0}\doteq
\mathcal{R}((T^\ast T)^{\mu_0})\setminus \{0\}$ and $\delta
>0$, is saturation function of
$\{R_\alpha\}_{\alpha\in (0,\alpha_0)}$ on $X_{\mu_0}$.
\end{thm}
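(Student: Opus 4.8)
The plan is to invoke Theorem \ref{teo:caract-sat}: it suffices to show that under hypotheses \textbf{i)}--\textbf{iv)}, the total error $\et$ is invariant over $M\doteq X_{\mu_0}$ and that $M$ is optimal for $\et$; the theorem then yields directly that $\mathcal{E}^{\text{tot}}_{X_{\mu_0}}$ is a saturation function on $X_{\mu_0}$, and since Lemma \ref{lem:cotasup-xmu} already gives $\psi_{\mu_0}\in\mathcal{U}_{X_{\mu_0}}(\et)$ with $\et\overset{X_{\mu_0}}{\preceq}\psi_{\mu_0}$, the final identification $\psi_{\mu_0}\overset{X_{\mu_0},X_{\mu_0}}{\approx}\et$ (hence $\psi_{\mu_0}$ is itself a saturation function) will follow from Corollary \ref{cor:sat-equiverror} once we show $\psi_{\mu_0}$ is an \emph{optimal} upper bound, i.e.\ $\et(x,\delta)\neq o(\delta^{2\mu_0/(2\mu_0+1)})$ for every $x\in X_{\mu_0}$. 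So really there are two things to prove: a lower bound $\et(x,\delta)\gtrsim \delta^{2\mu_0/(2\mu_0+1)}$ at each $x\in X_{\mu_0}$ (this gives both optimality of $\psi_{\mu_0}$ and, combined with the matching upper bound, invariance of $\et$ over $X_{\mu_0}$), and the statement that $X_{\mu_0}$ is optimal for $\et$ in the sense of condition \textit{C2}.

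First I would establish the pointwise lower bound. Fix $x\in X_{\mu_0}$, so $x=(T^\ast T)^{\mu_0}w$ for some $w\neq 0$. For any $\alpha$ and any choice $y^\delta\in\overline{B_\delta(Tx)}$, write $R_\alpha y^\delta - x = (R_\alpha y^\delta - R_\alpha Tx) + (R_\alpha Tx - x)$; the first term can be made as large as $\sim\delta G_\alpha$ in norm by an adversarial choice of $y^\delta$ (using that $T$ has singular values accumulating at $0$ by \textbf{i)}, so the relevant spectral projections are nontrivial), while the bias term $R_\alpha Tx - x = -r_\alpha(T^\ast T)x$ has norm bounded below using \textbf{iv)}: $\norm{r_\alpha(T^\ast T)(T^\ast T)^{\mu_0}w}\geq \gamma\alpha^{\mu_0}\norm{P_\alpha w}$ where $P_\alpha$ projects onto the part of the spectrum with $\lambda\geq c\alpha$, and this is $\gtrsim \alpha^{\mu_0}$ for $\alpha$ small. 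Balancing $\delta/\sqrt\alpha$ against $\alpha^{\mu_0}$ gives $\alpha\sim\delta^{2/(2\mu_0+1)}$ and a lower bound of order $\delta^{2\mu_0/(2\mu_0+1)}$ uniformly in $\alpha$ — here hypotheses \textbf{ii)a)--e)} and \textbf{H4} are what make the two error components genuinely compete (monotonicity in $\alpha$ of $\abs{r_\alpha}$ and the lower bounds \textbf{b)},\textbf{d)} prevent the bias from decaying too fast when $\alpha$ is not small, and $G_\alpha=O(\alpha^{-1/2})$ controls the propagated noise from below as well via \textbf{d)}). Combining with Lemma \ref{lem:cotasup-xmu} we get $\et(x,\delta)\asymp\delta^{2\mu_0/(2\mu_0+1)}$ with $x$-dependent constants, which is exactly $\psi_{\mu_0}\overset{X_{\mu_0},X_{\mu_0}}{\approx}\et$, giving invariance of $\et$ over $X_{\mu_0}$ and optimality of $\psi_{\mu_0}$.

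Next, the optimality of the set $X_{\mu_0}$ for $\et$. Take $x_c\notin X_{\mu_0}$; I must show that for every $x\in X_{\mu_0}$, neither $\et\overset{\{x_c\},\{x\}}{\prec}\et$ nor $\et\overset{\{x_c\},\{x\}}{\approx}\et$. Since $\et(x,\delta)\asymp\delta^{2\mu_0/(2\mu_0+1)}$ is known, this reduces to showing $\et(x_c,\delta)\neq O(\delta^{2\mu_0/(2\mu_0+1)})$, i.e.\ the total error at $x_c$ converges \emph{strictly slower} than the saturation rate. This is the genuine hard part, and it is where $0<\mu_0<\infty$ is essential: one needs a converse/lower-bound result saying that if $x\notin\mathcal{R}((T^\ast T)^{\mu_0})$ then no regularization method with qualification $\mu_0$ can achieve $\et(x,\delta)=O(\delta^{2\mu_0/(2\mu_0+1)})$ at $x$. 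I expect this to be supplied by one of the "converse results" of Section 4 alluded to in the introduction (a statement of the form: optimal-order convergence $O(\delta^{2\mu_0/(2\mu_0+1)})$ at $x$ forces $x\in\mathcal{R}((T^\ast T)^{\mu_0})$, which is classical for the standard Tikhonov/spectral setting and goes back to Neubauer); invoking it gives $\et(x_c,\delta)\neq O(\delta^{2\mu_0/(2\mu_0+1)})$, hence $\et\overset{\{x_c\},\{x\}}{\npreceq}\et$ for all $x\in X_{\mu_0}$, $x_c\notin X_{\mu_0}$, which is stronger than \textit{C2}. With invariance and optimality of the set both in hand, Theorem \ref{teo:caract-sat} finishes the proof, and the identification of $\psi_{\mu_0}$ with $\mathcal{E}^{\text{tot}}_{X_{\mu_0}}$ up to $(M,M)$-equivalence is immediate from $\psi_{\mu_0}\overset{X_{\mu_0},X_{\mu_0}}{\approx}\et$.
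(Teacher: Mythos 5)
The central step of your plan --- the pointwise two--sided estimate $\et(x,\delta)\asymp\delta^{2\mu_0/(2\mu_0+1)}$ on $X_{\mu_0}$, with the lower bound valid for \emph{all} small $\delta$ --- does not follow from the hypotheses and is false in general. Your lower bound on the propagated noise needs $\sup_{\lambda\in\sigma(TT^\ast)}\sqrt{\lambda}\,\abs{g_\alpha(\lambda)}\gtrsim\alpha^{-1/2}$; hypotheses \textbf{ii.d)}--\textbf{e)} give this only when $[\alpha,c_1\alpha]$ actually meets the spectrum, and hypothesis \textbf{i)} merely says $0$ is an accumulation point, so arbitrarily long spectral gaps are allowed (contrast hypothesis \textbf{M1} of Theorem \ref{teo:sat-calmaxima}, imposed precisely to exclude this; note also that \textit{H4} is an upper bound on $g_\alpha$, not a lower one). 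Concretely, take Tikhonov ($\mu_0=1$, Example 1) with $T$ compact, $\sigma(T^\ast T)=\{\lambda_n\}$, $\lambda_{n+1}=\lambda_n^{10}$, and any $x=(T^\ast T)w\in X_1$: at $\delta_n\doteq\lambda_n^2$ the choice $\alpha=\delta_n$ gives bias $\norm{r_\alpha(T^\ast T)x}\le\alpha\norm{w}=\delta_n$ and $\delta_n\norm{R_\alpha}\le\delta_n\max\{\lambda_n^{-1/2},\sqrt{\lambda_{n+1}}/\delta_n\}=\delta_n^{3/4}$, hence $\et(x,\delta_n)\le 2\,\delta_n^{3/4}=o(\delta_n^{2/3})$. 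So $\liminf_{\delta\to0^+}\et(x,\delta)/\delta^{2/3}=0$ and only the limsup is positive --- which is exactly why S1 and the notion of optimal upper bound are formulated as limsup conditions. With the two-sided bound gone, your route through Theorem \ref{teo:caract-sat} (invariance of $\et$ over $X_{\mu_0}$ and optimality of the set, both derived from that bound) collapses; moreover your closing step uses Corollary \ref{cor:sat-equiverror} backwards: it deduces $\psi_S\overset{M_S}{\approx}\et$ \emph{from} saturation, it does not promote a function equivalent to $\et$ to a saturation function.

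The paper's proof avoids all of this by verifying S1--S3 for $\psi_{\mu_0}$ directly: S2 is trivial since $\psi_{\mu_0}$ is independent of $x$, and for S1 and S3 the indispensable tool missing from your sketch is Lemma \ref{lem:o-O}, which converts a statement $\et(x^\ast,\delta)=o(\delta^{2\mu_0/(2\mu_0+1)})$ (resp.\ $O(\cdot)$) into the same rate for $\sup_{y^\delta\in\overline{B_\delta(Tx^\ast)}}\norm{R_{\tilde\alpha(\delta)}y^\delta-x^\ast}$ along an admissible a-priori parameter choice $\tilde\alpha(\delta)\to0$ (here \textbf{ii.b)}, \textbf{ii.c)} and Lemma \ref{lem:r-invertible} are what force $\tilde\alpha(\delta)\to0$). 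Only after this reduction can the classical converse results be applied: Theorem 3.1 of \cite{Neubauer94} yields $x^\ast=0$ in the ``$o$'' case (giving S1), and Corollary 2.6 of \cite{Neubauer94} yields $x^\ast\in\mathcal{R}((T^\ast T)^{\mu_0})$ in the ``$O$'' case (giving S3). Your appeal to ``a converse result from Section 4'' does not supply this bridge: the converse lemmas there (Lemmas \ref{lema:res-reciproco} and \ref{lem:sup-inf}) are tailored to the maximal-qualification theorem and, like Neubauer's results, bound quantities along a parameter choice rather than the inf--sup quantity $\et$. Your instinct that Neubauer-type converses are where $0<\mu_0<\infty$ enters is correct, but without the parameter-choice reduction, and with the false two-sided bound as backbone, the proposed proof does not go through.
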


\smallskip \smallskip

Note that the hypothesis \textbf{\textit{i)}} is trivially
satisfied if $T$ is compact. To prove this theorem we will need
two previous lemmas. In the first one we show that under the
hypotheses of Theorem \ref{teo:sat-espectral}, for all $\alpha$ in
a right neighborhood of zero one has that
$0\in\rho\left(r_\alpha(T^\ast T)\right)$, i.e. zero belongs to
the resolvent set of the operator $r_\alpha(T^\ast T)$. More
precisely we have the following:

\smallskip
\begin{lem}\label{lem:r-invertible}
Suppose that $\{g_\alpha\}_{\alpha>0}$ satisfies hypotheses
\textit{H1-H4} and assume further that hypotheses
\textbf{\textit{ii.b)}}, \textbf{\textit{ii.c)}},
\textbf{\textit{iii)}} and \textbf{\textit{vi)}} of Theorem
\ref{teo:sat-espectral} hold. Then for every $\alpha \in
(0,\alpha_0)$ the operator $r_\alpha(T^\ast T)$ is invertible,
where $\alpha_0\doteq\min\{\lambda_1, \frac{\lambda_1}{c}\}$.
\end{lem}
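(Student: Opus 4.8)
The strategy is to show that $0\notin\sigma(r_\alpha(T^\ast T))$ by bounding $|r_\alpha(\lambda)|$ away from zero uniformly on the spectrum of $T^\ast T$, which is contained in $[0,\|T\|^2]$. I would split the interval $[0,\|T\|^2]$ into two pieces determined by the threshold $c\alpha$, where $c$ is the constant from hypothesis \textbf{\textit{iv)}} of Theorem \ref{teo:sat-espectral} (the excerpt writes ``\textbf{\textit{vi)}}'', but this must be the inequality (\ref{cond-450}); I will refer to it as (\ref{cond-450})). On the upper piece $\{\lambda: c\alpha\le\lambda\le\|T\|^2\}$, I would use (\ref{cond-450}) directly: from $(\lambda/\alpha)^{\mu_0}|r_\alpha(\lambda)|\ge\gamma$ and $\lambda\le\|T\|^2$ we get $|r_\alpha(\lambda)|\ge\gamma(\alpha/\lambda)^{\mu_0}\ge\gamma(\alpha/\|T\|^2)^{\mu_0}$, a strictly positive lower bound depending only on $\alpha$ (with $\alpha$ fixed). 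On the lower piece $\{\lambda: 0\le\lambda< c\alpha\}$ I would use hypotheses \textbf{\textit{ii.b)}} and \textbf{\textit{ii.c)}}: by \textbf{\textit{ii.b)}}, $r_\alpha(\lambda)\ge\gamma_1$ whenever $0\le\lambda<\alpha$; to extend the lower bound up to $c\alpha$ I would exploit the monotonicity of $|r_\alpha(\lambda)|$ in $\alpha$ from \textbf{\textit{ii.c)}}. Specifically, for $\alpha\le\lambda< c\alpha$ one has $\lambda/c\le\alpha$, so writing $\beta\doteq\lambda/c\le\alpha$ we get $\lambda<\beta'$ for... — more carefully, since $\lambda<c\beta$ is not quite the setup, the cleanest route is: for $\lambda$ with $0<\lambda<c\alpha$, pick $\tilde\alpha\doteq\lambda$ (or $\tilde\alpha$ slightly above $\lambda$) so that $\lambda<\tilde\alpha$ and \textbf{\textit{ii.b)}} gives $r_{\tilde\alpha}(\lambda)\ge\gamma_1$; then since $\tilde\alpha\le\alpha$ (because $\lambda<c\alpha$ and $c>1$ does not immediately give $\lambda\le\alpha$, so one actually needs $\lambda\le\alpha$ here, meaning the genuinely delicate subinterval is $\alpha\le\lambda<c\alpha$) monotonicity in $\alpha$ from \textbf{\textit{ii.c)}} propagates the bound.

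The main obstacle, and the step I would treat most carefully, is precisely the intermediate band $\alpha\le\lambda< c\alpha$: hypothesis \textbf{\textit{ii.b)}} only controls $r_\alpha$ for $\lambda<\alpha$, and (\ref{cond-450}) only controls it for $\lambda\ge c\alpha$, so this band falls between the two. Here I would argue: for such $\lambda$, apply \textbf{\textit{ii.c)}} (monotone increase of $|r_\alpha(\lambda)|$ in $\alpha$) in the form $|r_\alpha(\lambda)|\ge|r_{\lambda}(\lambda)|$ if $\lambda\le\alpha$ — wait, we need $\alpha\ge\lambda$, which holds on this band; actually we need $|r_\alpha(\lambda)|\ge |r_{\alpha'}(\lambda)|$ for $\alpha'\le\alpha$, and choosing $\alpha'$ just below $\lambda$ would require $\alpha'<\lambda\le\alpha$, which is fine. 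Then \textbf{\textit{ii.b)}} with regularization parameter $\alpha'$ and spectral value $\lambda$ satisfying $\lambda<\alpha'$? No — we'd need $\lambda<\alpha'$, contradicting $\alpha'<\lambda$. So instead I would cover this band by \emph{applying (\ref{cond-450}) at a rescaled parameter}: for $\alpha\le\lambda<c\alpha$, set $\alpha'\doteq\lambda/c<\alpha$; then $\lambda=c\alpha'$, so (\ref{cond-450}) gives $(\lambda/\alpha')^{\mu_0}|r_{\alpha'}(\lambda)|\ge\gamma$, i.e. $|r_{\alpha'}(\lambda)|\ge\gamma c^{-\mu_0}$, and then monotonicity \textbf{\textit{ii.c)}} (since $\alpha'<\alpha$) yields $|r_\alpha(\lambda)|\ge|r_{\alpha'}(\lambda)|\ge\gamma c^{-\mu_0}$. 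That closes the gap.

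Having assembled a lower bound $m(\alpha)\doteq\min\{\gamma_1,\ \gamma c^{-\mu_0},\ \gamma(\alpha/\|T\|^2)^{\mu_0}\}>0$ valid for all $\lambda\in[0,\|T\|^2]\supseteq\sigma(T^\ast T)$, I would conclude via the functional calculus: since $\{g_\alpha\}$ satisfies \textit{H1}, $r_\alpha$ is piecewise continuous and bounded, so $r_\alpha(T^\ast T)$ is a bounded selfadjoint operator; the estimate $|r_\alpha(\lambda)|\ge m(\alpha)>0$ on the spectrum implies that $1/r_\alpha$ is a bounded function on $\sigma(T^\ast T)$, and hence $(r_\alpha(T^\ast T))^{-1}=(1/r_\alpha)(T^\ast T)$ is a bounded operator, giving invertibility. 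The hypothesis \textbf{\textit{iii)}} enters only to guarantee that $\mu_0$ is a finite positive number so that the exponents $\mu_0$ above are meaningful, and the definition $\alpha_0\doteq\min\{\lambda_1,\lambda_1/c\}$ ensures all the hypotheses \textbf{\textit{ii)}} and (\ref{cond-450}) are invoked within their stated ranges of validity.
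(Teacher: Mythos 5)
Your proposal is correct, and its core is the same as the paper's: the interval $[0,\norm{T}^2]$ is split at $\alpha$ and $c\alpha$, hypothesis \textbf{\textit{ii.b)}} handles $[0,\alpha)$, inequality (\ref{cond-450}) handles $[c\alpha,\norm{T}^2]$, and the delicate band $[\alpha,c\alpha)$ is bridged exactly as in the paper's Case II, by invoking (\ref{cond-450}) at a rescaled parameter below $\alpha$ (you take $\alpha'=\lambda/c$ pointwise, the paper takes $\alpha/c$ for the whole band) and then transferring the bound to $r_\alpha$ via the monotonicity hypothesis \textbf{\textit{ii.c)}}; you also correctly read ``\textbf{\textit{vi)}}'' as a typo for \textbf{\textit{iv)}}. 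The difference lies only in how invertibility is concluded. You convert the three regional estimates into a uniform lower bound $\abs{r_\alpha(\lambda)}\geq m(\alpha)>0$ on all of $[0,\norm{T}^2]$ and invert by bounded functional calculus, which actually yields a slightly stronger conclusion (a bounded inverse with norm at most $1/m(\alpha)$). The paper instead shows, for each $x\in X$, that $r_\alpha^{-2}(\lambda)$ is integrable with respect to $d\norm{E_\lambda x}^2$, using $\int\lambda^{2\mu_0}\,d\norm{E_\lambda x}^2=\norm{(T^\ast T)^{\mu_0}x}^2<\infty$; this only asserts that $\int r_\alpha^{-1}(\lambda)\,dE_\lambda$ is everywhere defined, i.e.\ it does not insist on boundedness of the inverse, which is all that is needed later (in Lemma \ref{lem:o-O} the lemma is used only to deduce $x^\ast=0$ from $\norm{r_{\alpha^\ast/2}(T^\ast T)x^\ast}=0$), and it would survive even under weaker hypotheses giving merely a $\lambda$-dependent, square-integrable lower bound rather than a uniform one. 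Both routes are valid under the stated hypotheses; your uniform bound is available here precisely because $(\alpha/\lambda)^{\mu_0}\geq(\alpha/\norm{T}^2)^{\mu_0}$ with $\mu_0<\infty$, which is the same reason the paper's integrals are finite.
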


\smallskip
\begin{proof}
It suffices to show that for every $\alpha \in (0,\alpha_0)$ and
for every $x \in X$, the function $r_\alpha^{-2}(\lambda)$ is
integrable with respect to the measure $d\norm{E_\lambda x}^2$.
Let $\alpha \in (0,\alpha_0)$ be arbitrary but fixed. Since
$\alpha_0\leq \lambda_1$, it follows from hypothesis
\textbf{\textit{ii.b)}} that $r_\alpha(\lambda)\geq \gamma_1>0$
for every  $\lambda \in [0, \alpha)$. Then
\begin{equation}\label{eq:int1}
\int_0^\alpha \frac 1 {r_\alpha^2(\lambda)}\;d\norm{E_\lambda
    x}^2\leq  \frac{\norm{x}^2}{\gamma_1^2}<+\infty.
\end{equation}

It remains to prove that $\int_\alpha^{\norm{T}^2+} \frac 1
{r_\alpha^2(\lambda)}\;d\norm{E_\lambda
    x}^2<+\infty.$ For that we shall consider two cases.

\underline{Case I}: $c\leq 1.$ In this case, for every $\lambda
\in [\alpha,\norm{T}^2]$ we have that $\lambda \geq \alpha \geq
c\,\alpha
>0$ and from (\ref{cond-450}) it follows that $\abs{r_\alpha(\lambda)}\geq
\gamma \left(\frac{\alpha}{\lambda}\right)^{\mu_0}$ for every
$\lambda \in [\alpha, \norm{T}^2]$. Therefore
\begin{equation*}
\int_{\alpha}^{\norm{T}^2+} \frac 1
{r_\alpha^2(\lambda)}\;d\norm{E_\lambda
    x}^2\leq \int_{\alpha}^{\norm{T}^2+}
\frac{\lambda^{2\mu_0}}{(\alpha^{\mu_0}\gamma)^{2}}\;
d\norm{E_\lambda x}^2\leq \frac{\norm{(T^\ast
    T)^{\mu_0}x}^2}{(\alpha^{\mu_0}\gamma)^{2}}<+\infty.
\end{equation*}

\underline{Case II}: $c>1$. In this case, since $c\alpha<\|T\|^2$
we write
\begin{equation}\label{eq:int2}
\int_{\alpha}^{\norm{T}^2+} \frac 1
{r_\alpha^2(\lambda)}\;d\norm{E_\lambda x}^2=
\int_{\alpha}^{c\,\alpha} \frac 1
{r_\alpha^2(\lambda)}\;d\norm{E_\lambda x}^2 +
\int_{c\,\alpha}^{\norm{T}^2+} \frac 1
{r_\alpha^2(\lambda)}\;d\norm{E_\lambda
    x}^2.
\end{equation}

Like in the previous case, by virtue of (\ref{cond-450}), the
second integral on the RHS of (\ref{eq:int2}) is bounded above by
$\frac{\norm{(T^\ast
T)^{\mu_0}x}^2}{(\alpha^{\mu_0}\gamma)^{2}}<+\infty.$ For the
first integral on the RHS of (\ref{eq:int2}), by virtue of
hypothesis \textbf{\textit{ii.c)}} we have that
\begin{equation}\label{eq:1}
    r_\alpha^2(\lambda)\geq r_{\alpha/c}^2(\lambda), \quad
    \forall\; \lambda\in[\alpha, c\alpha]
\end{equation}
because $\frac{\alpha}{c}<\alpha$. On the other hand, again by
using (\ref{cond-450}), and given that $0<c(\frac{\alpha}{c})\leq
\lambda$ we have that
\begin{equation} \label{eq:2}
\left(\frac{\lambda}{\alpha/c}\right)^{2\mu_0}r_{\alpha/c}^2(\lambda)\geq
    \gamma^2.
\end{equation}
From (\ref{eq:1}) and (\ref{eq:2}) we conclude that
$r_\alpha^2(\lambda)\geq \gamma^2
\left(\frac{\alpha}{c\,\lambda}\right)^{2\mu_0}$ for every
$\lambda \in [\alpha,c\,\alpha]$. Thus, for the first integral on
the RHS of (\ref{eq:int2}) we have the estimate
\begin{equation*}
\int_{\alpha}^{c\,\alpha}\frac 1
{r_\alpha^2(\lambda)}\;d\norm{E_\lambda
    x}^2\leq \int_{\alpha}^{c\,\alpha}
\frac{c^{2\mu_0}}{\alpha^{2\mu_0}\gamma^2}\lambda^{2\mu_0}\;
d\norm{E_\lambda
    x}^2\leq \frac{c^{2\mu_0}}{\alpha^{2\mu_0}\gamma^2}\norm{(T^\ast
    T)^{\mu_0}x}^2<\infty.
\end{equation*}
Hence $r_\alpha(T^\ast T)$ is an invertible operator for every
$\alpha\in(0, \alpha_0)$.\hfill
\end{proof}

\smallskip
\begin{lem}\label{lem:o-O}
Suppose that $\{g_\alpha\}_{\alpha>0}$ satisfies the hypotheses
\textit{H1-H4} and suppose further that hypotheses
\textbf{\textit{ii.b)}}, \textbf{\textit{ii.c)}},
\textbf{\textit{iii)}} and \textbf{\textit{iv)}} of Theorem
\ref{teo:sat-espectral} hold. Let
$\varphi:[0,\norm{T}^2]\rightarrow\R^+$ be a continuous, strictly
increasing function satisfying $\varphi(0)=0$. If for some $x^\ast
\in X$, $x^\ast \neq 0$ we have that
$\et(x^\ast,\delta)=o(\varphi(\delta))$ for $\delta \rightarrow
0^+$, then there exists an \textit{a-priori} parameter choice rule
$\tilde{\alpha}(\delta)$ such that
\begin{equation*}
\underset{y^\delta \in \overline{B_\delta(Tx^\ast)}}{\sup}
\norm{R_{\tilde{\alpha}(\delta)}
y^\delta-x^\ast}=o(\varphi(\delta))\quad \textrm{ for } \delta
\rightarrow 0^+.
\end{equation*}
The same remains true if we replace $o(\varphi(\delta))$ by
$O(\varphi(\delta))$.
\end{lem}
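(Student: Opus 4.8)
The plan is to \emph{realize} the infimum in the definition of $\et(x^\ast,\cdot)$ by an explicit near-optimal parameter choice, and then to check that this choice is a genuine a-priori rule, i.e.\ that it tends to $0$ with $\delta$; it is precisely this last step that uses hypotheses \textbf{\textit{ii.b)}}, \textbf{\textit{ii.c)}}, \textbf{\textit{iii)}} and \textbf{\textit{iv)}}, through Lemma \ref{lem:r-invertible}.

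First I would set, for $\alpha\in(0,\alpha_0)$ and $\delta>0$,
\begin{equation*}
\Phi(\alpha,\delta)\doteq\underset{y^\delta\in\overline{B_\delta(Tx^\ast)}}{\sup}\norm{R_\alpha y^\delta-x^\ast},\qquad\text{so that}\qquad\et(x^\ast,\delta)=\underset{\alpha\in(0,\alpha_0)}{\inf}\,\Phi(\alpha,\delta).
\end{equation*}
By Remark \ref{rem:1}, $\et\in\mathcal{F}_{\{x^\ast\}}$, so $0<\et(x^\ast,\delta)<\infty$ and therefore, for each $\delta>0$, the definition of infimum yields some $\tilde\alpha(\delta)\in(0,\alpha_0)$ with $\Phi(\tilde\alpha(\delta),\delta)<2\,\et(x^\ast,\delta)$ (for $\delta$ outside a right neighbourhood of $0$ one may set $\tilde\alpha(\delta)$ arbitrarily). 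This single choice settles the asymptotic rate in both forms at once: if $\et(x^\ast,\delta)=o(\varphi(\delta))$ then $\Phi(\tilde\alpha(\delta),\delta)<2\,\et(x^\ast,\delta)=o(\varphi(\delta))$, and likewise with $O$ in place of $o$.

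It remains to prove $\tilde\alpha(\delta)\to0$ as $\delta\to0^+$. Again by Remark \ref{rem:1}, $\et(x^\ast,\delta)\to0$, hence $\Phi(\tilde\alpha(\delta),\delta)\to0$; and choosing the admissible datum $y^\delta=Tx^\ast$ and using $R_\alpha T=g_\alpha(T^\ast T)T^\ast T$ together with $\lambda g_\alpha(\lambda)-1=-r_\alpha(\lambda)$ gives
\begin{equation*}
\Phi(\alpha,\delta)\ge\norm{R_\alpha Tx^\ast-x^\ast}=\norm{r_\alpha(T^\ast T)x^\ast},
\end{equation*}
so that $\norm{r_{\tilde\alpha(\delta)}(T^\ast T)x^\ast}\to0$. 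Now hypothesis \textbf{\textit{ii.c)}} makes $\lambda\mapsto\abs{r_\alpha(\lambda)}$, and hence $\alpha\mapsto\norm{r_\alpha(T^\ast T)x^\ast}^2=\int_0^{\norm{T}^2+}\abs{r_\alpha(\lambda)}^2\,d\norm{E_\lambda x^\ast}^2$, nondecreasing (the measure $d\norm{E_\lambda x^\ast}^2$ being null at $\lambda=0$ because $T$ is injective, so that \textbf{\textit{ii.c)}}, which holds for $\lambda\in(0,\norm{T}^2]$, applies $d\norm{E_\lambda x^\ast}^2$-almost everywhere). Thus if $\tilde\alpha(\delta)$ did not tend to $0$ there would be $\delta_n\to0^+$ and $\varepsilon_0\in(0,\alpha_0)$ with $\norm{r_{\tilde\alpha(\delta_n)}(T^\ast T)x^\ast}\ge\norm{r_{\varepsilon_0}(T^\ast T)x^\ast}$ for all $n$; but Lemma \ref{lem:r-invertible} (here \textbf{\textit{ii.b)}}, \textbf{\textit{ii.c)}}, \textbf{\textit{iii)}} and \textbf{\textit{iv)}} are invoked) gives $0\in\rho(r_{\varepsilon_0}(T^\ast T))$, so $r_{\varepsilon_0}(T^\ast T)$ is boundedly invertible and $\norm{r_{\varepsilon_0}(T^\ast T)x^\ast}\ge\eta\norm{x^\ast}>0$ for some $\eta>0$ (using $x^\ast\neq0$), contradicting $\norm{r_{\tilde\alpha(\delta_n)}(T^\ast T)x^\ast}\to0$. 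Hence $\tilde\alpha(\delta)\to0$, $\tilde\alpha$ is an a-priori parameter choice rule, and the proof — for both $o$ and $O$ — is complete.

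I expect the only genuinely delicate point to be this last step: one must exclude that a near-optimal $\tilde\alpha(\delta)$ stagnates at a positive value, which would disqualify it as a regularization parameter for the converse arguments that follow. That exclusion is exactly controlled by the monotonicity of the ``bias'' $\norm{r_\alpha(T^\ast T)x^\ast}$ in $\alpha$ (hypothesis \textbf{\textit{ii.c)}}) together with its being bounded away from $0$ whenever $\alpha$ is, which in turn follows from the invertibility of $r_\alpha(T^\ast T)$ on $(0,\alpha_0)$ provided by Lemma \ref{lem:r-invertible}. Everything else — the realization of the infimum and the $o/O$ bookkeeping — is elementary.
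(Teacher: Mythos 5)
Your proposal is correct, and it reaches the conclusion by a leaner construction than the paper's. The paper builds the rule discretely: it sets levels $\delta_n\doteq\sup\{\delta:h(\delta)\le\tfrac1n\}$, picks $\alpha_n$ with $f(\alpha_n,\delta)\le\tfrac2n$ on $(0,\delta_n]$, defines a piecewise constant $\alpha(\delta)$, and then—because nothing guarantees that this $\alpha(\delta)$ has a limit—passes to a convergent subsequence $\alpha_{n_k}\to\alpha^\ast$ and finally shows $\alpha^\ast=0$; the $O$ case is then handled by a parallel construction. You instead take, for each $\delta$, a factor-$2$ near-minimizer of the inner supremum (legitimate since $\et(x^\ast,\delta)>0$ by Remark \ref{rem:1}), which settles the $o$ and the $O$ bookkeeping simultaneously, and you prove $\tilde\alpha(\delta)\to0$ directly by contradiction along an arbitrary sequence, with no need for the subsequence machinery or for the limit of $\tilde\alpha$ to exist a priori. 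The decisive mechanism is the same in both arguments: bound the supremum from below by the bias $\norm{r_{\tilde\alpha(\delta)}(T^\ast T)x^\ast}$, use the monotonicity in $\alpha$ from hypothesis \textbf{\textit{ii.c)}} to compare with a fixed positive parameter value, and invoke Lemma \ref{lem:r-invertible} (hence \textbf{\textit{ii.b)}}, \textbf{\textit{iii)}}, \textbf{\textit{iv)}}) to conclude that the bias there is nonzero for $x^\ast\neq0$. Your remark that the spectral measure has no mass at $\lambda=0$ (injectivity of $T$) is a point the paper glosses over, and for the final contradiction plain injectivity of $r_{\varepsilon_0}(T^\ast T)$ already suffices, so the bounded-invertibility constant $\eta$ is not needed. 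The only blemish is a slip of notation: you wrote that \textbf{\textit{ii.c)}} makes $\lambda\mapsto\abs{r_\alpha(\lambda)}$ nondecreasing, where you clearly mean $\alpha\mapsto\abs{r_\alpha(\lambda)}$, as your next clause shows; with that read correctly, the proof is complete.
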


\smallskip
\begin{proof}
Let $\varphi$ be as in the hypotheses and suppose that there
exists $x^\ast \in X$, $x^\ast\neq 0$ such that
$\et(x^\ast,\delta)=o(\varphi(\delta))$ for $\delta \rightarrow
0^+$. Then by definition of $\et,$
\begin{equation} \label{eq:11}
\underset{\delta \rightarrow 0^+}{\lim}\frac{\underset{\alpha \in
(0,\alpha_0)}{\inf}\;\underset{y^\delta \in
\overline{B_\delta(Tx^\ast)}}{\sup} \norm{R_\alpha
y^\delta-x^\ast}}{\varphi(\delta)}=\underset{\delta \rightarrow
0^+}{\lim}\,\underset{\alpha \in
(0,\alpha_0)}{\inf}\frac{\underset{y^\delta \in
\overline{B_\delta(Tx^\ast)}}{\sup} \norm{R_\alpha
y^\delta-x^\ast}}{\varphi(\delta)}=0.
\end{equation}
For the sake of simplify we introduce the following notation:
\begin{equation*}
f(\alpha,\delta)\doteq\frac{\underset{y^\delta \in
\overline{B_\delta(Tx^\ast)}}{\sup} \norm{R_\alpha
y^\delta-x^\ast}}{\varphi(\delta)}\quad \textrm{and}\quad
h(\delta)\doteq \underset{\alpha \in (0,\alpha_0)}{\inf}
f(\alpha,\delta).
\end{equation*}
Then $h(\delta)>0$ for every $\delta \in(0,\infty)$ and
(\ref{eq:11}) can be written simply as $\underset{\delta
\rightarrow 0^+}{\lim}h(\delta)=0$. Next, for $n \in \N$ we define
\begin{equation*}
\delta_n\doteq \sup \left\{\delta>0: h(\delta)\leq \frac 1 n
\right\}.
\end{equation*}
Clearly, $\delta_n \downarrow 0$ and $h(\delta)=\underset{\alpha
\in (0,\alpha_0)}{\inf} f(\alpha,\delta)\leq \frac 1 n$ for every
$\delta \in (0, \delta_n]$ for every $n \in \N$. Then, there
exists $\alpha_n=\alpha_n(\delta_n)\in (0,\alpha_0)$ such that
\begin{equation}\label{eq:desig}
f(\alpha_n,\delta)\leq \frac 2 n \quad\forall\; \delta \in (0,
\delta_n], \;\forall \;n \in \N.
\end{equation}
We then define $\alpha(\delta)\doteq \alpha_n$ for all $\delta \in
(\delta_{n+1},\delta_n]$ for every $n \in \N$. Then, since
$\delta_n \downarrow 0$ it follows from (\ref{eq:desig}) that
$\underset{\delta \rightarrow
0^+}{\lim}f(\alpha(\delta),\delta)=\underset{n\rightarrow
+\infty}{\lim}f(\alpha_n,\delta_n)=0$. We could choose $\alpha$ as
the parameter choice rule we are looking for. The problem is that
we cannot guarantee the existence of the limit of $\alpha(\delta)$
for $\delta \rightarrow 0^+$. However, we will see next that
$\alpha(\delta)$ can be replaced by a function
$\tilde{\alpha}:\R^+\rightarrow(0,\alpha_0)$ such that
$\underset{\delta\rightarrow 0^+}{\lim}\,\tilde{\alpha}(\delta)=0$
(i.e, such that $\tilde{\alpha}(\delta)$ is an admissible
parameter choice rule) maintaining the condition $\underset{\delta
\rightarrow 0^+}{\lim}f(\tilde{\alpha}(\delta),\delta)=0$. In
fact, since $\{\alpha_n\}_{n \in \N}\subset (0,\alpha_0)$ is a
bounded sequence of real numbers, it contains a convergent
subsequence $\{\alpha_{n_k}\}_{k \in \N}$, with
$\alpha_{n_k}\rightarrow \alpha^\ast$ for $k\rightarrow +\infty$,
and some $\alpha^\ast \in [0,\alpha_0].$ We define
$\tilde{\alpha}(\delta)\doteq \alpha_{n_k}$ for all $\delta \in
(\delta_{n_{k+1}},\delta_{n_k}]$, for every $k \in \N$. Then,
\begin{equation}\label{eq:alpha-ast}
\underset{\delta\rightarrow
0^+}{\lim}\,\tilde{\alpha}(\delta)=\underset{k\rightarrow
+\infty}{\lim}\,\alpha_{n_k}=\alpha^\ast.
\end{equation}
Since $\{\alpha_{n_k}\}_{k \in \N}$ and $\{\delta_{n_k}\}_{k \in
\N}$ are subsequences of $\{\alpha_{n}\}_{n \in \N}$ and
$\{\delta_{n}\}_{n \in \N}$,\break $\underset{\delta\rightarrow
0^+}{\lim}f(\tilde{\alpha}(\delta),\delta)=\underset{k\rightarrow
+\infty}{\lim}f(\alpha_{n_k},\delta_{n_k})=0.$ Then, by definition
of $f$,
\begin{equation*}
\underset{\delta \rightarrow 0^+}{\lim}\,\frac{\underset{y^\delta
\in \overline{B_\delta(Tx^\ast)}}{\sup}
\norm{R_{\tilde{\alpha}(\delta)}
y^\delta-x^\ast}}{\varphi(\delta)}=0,
\end{equation*}
that is,
\begin{equation}\label{eq:lim}
\underset{y^\delta \in \overline{B_\delta(Tx^\ast)}}{\sup}
\norm{R_{\tilde{\alpha}(\delta)}
y^\delta-x^\ast}=o(\varphi(\delta)), \;\textrm{ as }\; \delta
\rightarrow 0^+.
\end{equation}
It remains to be shown that $\alpha^\ast=0$. If $\alpha^\ast
>0$, then it follows from (\ref{eq:alpha-ast}) that there exists
$\delta_0>0$ such that
$\tilde{\alpha}(\delta)>\frac{\alpha^\ast}{2}$ for all $\delta \in
(0,\delta_0)$. Hypothesis \textbf{\textit{ii.c)}} of Theorem
\ref{teo:sat-espectral} implies then that for every $\delta \in
(0,\delta_0)$, $\abs{r_{\tilde{\alpha}(\delta)}(\lambda)}\geq
\abs{r_{\frac{\alpha^\ast}{2}}(\lambda)}$ for all $\lambda \in (0,
\norm{T}^2].$ It follows that for every $\delta \in (0,\delta_0)$,
\begin{eqnarray*}
  \norm{r_{\tilde{\alpha}(\delta)}(T^\ast
T)x^\ast}^2 &=&
\int_0^{\norm{T}^2+}r_{\tilde{\alpha}(\delta)}^2(\lambda)\;d\norm{E_\lambda
    x^\ast}^2 \\ \nonumber
&\geq&\int_0^{\norm{T}^2+}r_{\frac{\alpha^\ast}{2}}^2(\lambda)\;d\norm{E_\lambda
    x^\ast}^2 \\
    &=&\norm{r_{\frac{\alpha^\ast}{2}}(T^\ast
T)x^\ast}^2.
\end{eqnarray*}
Then, for all $\delta \in (0,\delta_0)$,
\begin{eqnarray*}
\underset{y^\delta \in \overline{B_\delta(Tx^\ast)}}{\sup}
\norm{R_{\tilde{\alpha}(\delta)} y^\delta-x^\ast} &\geq&
\norm{R_{\tilde{\alpha}(\delta)} Tx^\ast-x^\ast}
   = \norm{(I-g_{\tilde{\alpha}(\delta)}(T^\ast T)T^\ast T)x^\ast} \\
   &=& \norm{r_{\tilde{\alpha}(\delta)}(T^\ast T)x^\ast}
   \geq\norm{r_{\frac{\alpha^\ast}{2}}(T^\ast T)x^\ast}.
\end{eqnarray*}
Taking limit for $\delta \rightarrow 0^+$ and using (\ref{eq:lim})
we conclude that $\norm{r_{\frac{\alpha^\ast}{2}}(T^\ast
T)x^\ast}=0.$ But since $\frac{\alpha^\ast}{2}<\alpha_0$, it
follows from Lemma \ref{lem:r-invertible} that
$r_{\frac{\alpha^\ast}{2}}(T^\ast T)$ is invertible and therefore
$x^\ast=0$, which is a contradiction since $x^\ast$ was not zero
to start with. Hence, $\alpha^\ast$ must be equal to zero, as
wanted.

We proceed now to prove the second part of the Lemma. Suppose that
there exists $x^\ast \in X$, $x^\ast\neq 0$ such that
$\et(x^\ast,\delta)=O(\varphi(\delta))$ as $\delta \rightarrow
0^+$. Then there exist positive constants $k$ and $d$ such that
$\underset{\alpha \in (0,\alpha_0)}{\inf}f(\alpha, \delta)\leq k$
for every $\delta \in (0,d)$, where $f(\alpha,\delta)$ is as
previously defined. Let $\{\delta_n\}_{n\in \N}\subset (0,d)$ be
such that $\delta_n\downarrow 0$ and
$\alpha_n=\alpha_n(\delta_n)\in (0,\alpha_0)$ such that
\begin{equation*}
    f(\alpha_n,\delta)\leq k+\delta_n, \;\forall\, \delta \in
    (0,d),\;\forall \,n \in \N.
\end{equation*}
We define (just like we did it previously for the ``$o$'' case)
$\alpha(\delta)\doteq \alpha_n$ for all $\delta \in
(\delta_{n+1},\delta_n]$ for every $n \in \N$. Since $\delta_n
\downarrow 0$ it follows that $f(\alpha(\delta),\delta)\leq
k+\delta_1$ for every $\delta \in (0,d)$ and therefore
\begin{equation}\label{eq:O}
\underset{y^\delta \in \overline{B_\delta(Tx^\ast)}}{\sup}
\norm{R_{\alpha(\delta)} y^\delta-x^\ast}=O(\varphi(\delta))\quad
\textrm{as } \delta \rightarrow 0^+.
\end{equation}

Exactly in the same way as we proceeded before in the first part
of the proof, by defining the function $\tilde{\alpha}(\delta)$
(from a convergent subsequence of $\{\alpha_n\}_{n \in \N}$),
equation (\ref{eq:O}) is proved with $\tilde{\alpha}(\delta)$ in
place of $\alpha(\delta)$. Finally, and also by proceeding in an
analogous way, it is shown that $\tilde{\alpha}(\delta)$ converges
to zero as $\delta \rightarrow 0^+$, i.e. that
$\tilde{\alpha}(\delta)$ is an admissible parameter choice rule.
Since the steps are essentially the same we do not give details
here.\hfill
\end{proof}

We are now ready to prove Theorem \ref{teo:sat-espectral}.

\textit{Proof of Theorem \ref{teo:sat-espectral}.} We will show
that $\psi_{\mu_0}(x,\delta)\doteq
\delta^{\frac{2\mu_0}{2\mu_0+1}}$ for $x \in X_{\mu_0}$ and
$\delta >0$, is saturation function of $\{R_\alpha\}_{\alpha\in
(0,\alpha_0)}$ on $X_{\mu_0}$.

First we note that by virtue of Lemma \ref{lem:cotasup-xmu},
$\psi_{\mu_0} \in \mathcal{U}_{X_{\mu_0}}(\et)$. Next we will show
that $\psi_{\mu_0}$ satisfies condition \textit{S1} of saturation
on $X_{\mu_0}$ (see Definition \ref{def:satur}). Suppose that it
is not true, i.e. suppose that there exist $x^\ast \in X$,
$x^\ast\neq 0$ and $x \in X_{\mu_0}$ such that $\underset{\delta
\rightarrow0^+}{\limsup}\,\frac{\et(x^\ast,\delta)}{\psi_{\mu_0}(x,\delta)}=0$.
Then
$\et(x^\ast,\delta)=o\left(\delta^{\frac{2\mu_0}{2\mu_0+1}}\right)$
as $\delta \rightarrow 0^+$ and from Lemma \ref{lem:o-O} it
follows that there exists an \textit{a-priori} admissible
parameter choice rule
 $\alpha(\delta)$ such that
\begin{equation*}
\underset{y^\delta \in \overline{B_\delta(Tx^\ast)}}{\sup}
\norm{R_{\alpha(\delta)}
y^\delta-x^\ast}=o(\delta^{\frac{2\mu_0}{2\mu_0+1}})\quad \textrm{
for } \delta \rightarrow 0^+.
\end{equation*}

Now note that hypothesis  \textit{H4} implies that there exists a
finite positive constant $\beta$ such that
$\sqrt{\lambda}\abs{g_\alpha(\lambda)}\leq
\frac{\beta}{\sqrt{\alpha}}$, for every $\alpha \in (0,\alpha_0)$
and for every $\lambda \in [0,\norm{T}^2]$. Since $\{g_\alpha\}$
satisfies the hypotheses \textit{H1-H4} and
\textbf{\textit{i)-iv)}} hold, it follows from Theorem 3.1 of
\cite{Neubauer94} that $x^\ast=0$, which contradicts the fact that
$x^\ast$ was different from zero. Hence, $\psi_{\mu_0}$ satisfies
condition \textit{S1} on $X_{\mu_0}$. Since $\psi_{\mu_0}$ does
not depend on $x$, we further have that $\psi_{\mu_0}$ is
(trivially) invariant over $X_{\mu_0}$, i.e., it satisfies
condition \textit{S2}.

It only remains to prove that $\psi_{\mu_0}$ satisfies condition
\textit{S3}, that is, that the set $X_{\mu_0}$ is optimal for
$\psi_{\mu_0}$. Suppose that is not the case. Then there must
exist $M\supsetneqq X_{\mu_0}$ and $\tilde{\psi} \in
\mathcal{U}_{M}(\et)$ such that $\tilde{\psi}\mid
_{X_{\mu_0}}=\psi_{\mu_0}$ and $\tilde{\psi}$ satisfies
\textit{S1} and \textit{S2} on $M$. Let $x^\ast \in M\setminus
X_{\mu_0}$, $x^\ast \neq 0$. Since $\tilde{\psi} \in
\mathcal{U}_{M}(\et)$ we have that
\begin{equation}\label{eq:des} \et
\overset{\{x^\ast\}}{\preceq}\tilde{\psi}.
\end{equation}
Also, since $\tilde{\psi}$ is invariant over $M$, we have that
$\tilde{\psi}\overset{\{x^\ast\},X_{\mu_0}}{\preceq}{\tilde{\psi}}$,
and since $\tilde{\psi}$ coincides with $\psi_{\mu_0}$ on
$X_{\mu_0}$, it follows that
$\tilde{\psi}\overset{\{x^\ast\},X_{\mu_0}}{\preceq}{\psi_{\mu_0}}$.
This, together with (\ref{eq:des}) implies that
$\et\overset{\{x^\ast\},X_{\mu_0}}{\preceq}{\psi_{\mu_0}}$ and
therefore
$\et(x^\ast,\delta)=O\left(\delta^{\frac{2\mu_0}{2\mu_0+1}}\right)$
as $\delta \rightarrow 0^+.$ Lemma \ref{lem:o-O} then implies that
there exists an \textit{a-priori} admissible parameter choice rule
 $\alpha(\delta)$ such that
\begin{equation*}
    \underset{y^\delta \in \overline{B_\delta(Tx^\ast)}}{\sup}
\norm{R_{\alpha(\delta)}
y^\delta-x^\ast}=O\left(\delta^{\frac{2\mu_0}{2\mu_0+1}}\right)
\quad \textrm{as } \delta \rightarrow 0^+.
\end{equation*}
Since $\mu_0<+\infty$ it follows that $x^\ast \in
\mathcal{R}((T^\ast T)^{\mu_0})$ (see \cite{Neubauer94}, Corollary
2.6) and since $x^\ast \neq 0$, we have that $x^\ast \in
X_{\mu_0}$ which contradicts that $x^\ast \in M \setminus
X_{\mu_0}$. Thus, $\psi_{\mu_0}$ satisfies condition \textit{S3}
and $\psi_{\mu_0}$ is saturation function of $\{R_\alpha\}$ on
$X_{\mu_0}$, as we wanted to prove. \hfill

\subsection{Spectral Methods with Maximal Qualification}
The concept of classical qualification is a special case of a more
general definition of qualification introduced by Math\'{e} and
Pereverzev (\cite{ref:Mathe-Pereverzev-2003}, \cite{Mathe2004}).

\begin{defn}\label{def:calif-mathe}
Let $\{R_\alpha\}_{\alpha \in (0,\alpha_0)}$ be a family of
spectral regularization operators for $Tx=y$ generated by the
family of functions $\{g_\alpha\}_{\alpha \in (0,\alpha_0)}$ and
let $r_\alpha(\lambda)\doteq 1-\lambda g_\alpha(\lambda)$. A
function  $\rho:(0,\norm{T}^2]\rightarrow \R^+$ is said to be
qualification of $\{R_\alpha\}_{\alpha \in (0,\alpha_0)}$ if
$\rho$ is increasing and there exists a constant $\gamma>0$ such
that
\begin{equation*}
\underset{\lambda \in
(0,\norm{T}^2]}{\sup}\abs{r_\alpha(\lambda)}\,\rho(\lambda)\leq
\gamma \,\rho(\alpha) \quad \textrm{for every}\; \alpha \in
(0,\alpha_0).
\end{equation*}
If, moreover, for every $\lambda \in (0,\norm{T}^2]$ there exists
a constant $c\doteq c(\lambda)>0$ such that
\begin{equation*}
\underset{\alpha\in
(0,\alpha_0)}{\inf}\frac{\abs{r_\alpha(\lambda)}}{\rho(\alpha)}\geq
c
\end{equation*}
then $\rho$ is said to be maximal qualification of
$\{R_\alpha\}_{\alpha \in (0,\alpha_0)}$.
\end{defn}

Note then that the classical qualification of order $\mu$
corresponds to the case in which the functions $\rho$ are
restricted to monomials $\rho(t)=t^\mu$ for $0\leq \mu < +\infty$.

These two definitions of qualification are closely related. For
instance, if a spectral regularization method $\{R_\alpha\}$
possesses classical qualification of order $\mu_0<\infty$, then
any increasing function $\tilde{\rho}:(0,\norm{T}^2]\rightarrow
\R^+$ satisfying $\alpha^{\mu_0}\leq k\,\tilde{\rho}(\alpha)$ for
some constant $k>0$, for $\alpha$ in a neighborhood of $\alpha=0$,
is also qualification of $\{R_\alpha\}$. Also, if $\alpha^{\mu_0}$
and $\tilde{\rho}(\alpha)$ are two maximal qualifications then
they are necessarily equivalent in the sense that there exist
constants $k, \tilde{k}>0$ such that $k\,\alpha^{\mu_0}\leq
\tilde{\rho}(\alpha)\leq \tilde{k}\, \alpha^{\mu_0}$ for every
$\alpha\in(0,\alpha_0)$. On the other hand, if a spectral
regularization method $\{R_\alpha\}$ has classical qualification
of infinite order, then it does not necessarily have maximal
qualification.

Next, we will show that under certain general hypotheses, it is
also possible to characterize the saturation of spectral
regularization methods possessing maximal qualification.  For that
we will previously need the following definition.

\begin{defn}\label{def:local-type}
Let $\rho:(0,a]\rightarrow(0,+\infty)$ be a continuous
non-decreasing function such that $\underset{t\rightarrow
0^+}{\lim}\rho(t)=0$ and $\beta \in \R$, $\beta \geq 0$. We say
that $\rho$ is of local upper type $\beta$ if there exists a
positive constant $d$ such that $\rho(t)\leq d\,(\frac 1
s)^\beta\rho(s\,t)$ for every $s \in (0,1]$, $t \in (0,a]$.
\end{defn}

A function of finite upper type is also said to satisfy a
$\Delta_\beta$ condition.

\begin{thm}\label{teo:sat-calmaxima} (Saturation for families of spectral
regularization operators with maximal qualification.)

Let $T$ be a compact linear operator. Suppose that
$\{g_\alpha\}_{\alpha \in (0,\alpha_0)}$ satisfies hypotheses
H1-H4 and let $\{R_\alpha\}_{\alpha\in (0,\alpha_0)}$ be as
defined by (\ref{eq:Ralfa-galfa}). Suppose further that the
following hypotheses are satisfied:

\textbf{M1}: There exist
$\{\tilde{\lambda}_n\}_{n=1}^\infty\subset \sigma_p(TT^\ast)$ and
$c\geq 1$ such that $\tilde{\lambda}_n \downarrow 0$ and
$\frac{\tilde{\lambda}_n}{\tilde{\lambda}_{n+1}}\leq c$ for every
$n\in \N$.

\textbf{M2}: There exist positive constants $\lambda_1 \leq
\norm{T}^2$, $\gamma_1,\gamma_2$ and $c_1>1$ such that

\quad \textbf{a)} $0 \leq r_\alpha(\lambda)\leq 1$, $\alpha>0$,
$0\leq \lambda\leq \lambda_1$;

\quad \textbf{b)} $r_\alpha(\lambda)\geq\gamma_1$,
$0\leq\lambda<\alpha\leq \lambda_1$;

\quad \textbf{c)} $\abs{r_\alpha(\lambda)}$ is monotone increasing
as a function of $\alpha$ for each $\lambda\in (0,\norm{T}^2]$;

\quad \textbf{d)} $g_\alpha(c_1\alpha)\geq
\frac{\gamma_2}{\alpha}$, $0<c_1\alpha\leq \lambda_1$ and

\quad \textbf{e)} $g_\alpha(\lambda)\geq
g_\alpha(\tilde{\lambda})$, for $0<\alpha\leq\lambda\leq
\tilde{\lambda}\leq \lambda_1$.

\textbf{M3}: There exists $\rho:(0,\norm{T}^2]\rightarrow
(0,+\infty)$, strictly increasing and of local upper type $\beta$,
for some $\beta \geq0$, such that $\rho$ is maximal qualification
of $\{R_\alpha\}_{\alpha\in (0,\alpha_0)}$ and there exist
positive constants $a$ and $k$ such that
\begin{equation*}
\frac{\rho(\lambda)\abs{r_\alpha(\lambda)}}{\rho(\alpha)}\geq
   a, \quad \textrm{for all}\; \alpha,\,\lambda \; \textrm{such that } \; 0<k\,\alpha\leq \lambda \leq
    \norm{T}^2.
\end{equation*}

\vspace{-.13in} \textbf{M4}: For every $\alpha \in (0,\alpha_0)$
the function $\lambda \rightarrow \abs{r_\alpha(\lambda)}^2$,
$\lambda \in (0,\norm{T}^2]$ is convex.

\smallskip

Let $\Theta(t)\doteq \sqrt{t}\rho(t)$ for $t \in (0,\norm{T}^2]$.
Then $\psi(x,\delta)\doteq (\rho \circ \Theta^{-1})(\delta)$ for
$x \in X^\rho\doteq \mathcal{R}(\rho(T^\ast T))\setminus \{0\}$
and $\delta \in (0,\Theta (\alpha_0))$, is saturation function of
$\{R_\alpha\}_{\alpha \in (0,\alpha_0)}$ on $X^\rho$.
\end{thm}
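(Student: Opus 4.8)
The plan is to verify directly the three defining conditions of a saturation function in Definition \ref{def:satur} for the pair $(\psi, X^\rho)$, following the same scheme used in the proof of Theorem \ref{teo:sat-espectral}, but with the monomial $\lambda^{\mu_0}$ replaced by the general qualification $\rho$ and the convergence index $\frac{2\mu_0}{2\mu_0+1}$ replaced by the index function $\rho\circ\Theta^{-1}$. Before doing so one would establish, as intermediate lemmas, the maximal-qualification analogues of Lemma \ref{lem:r-invertible} and Lemma \ref{lem:o-O}: namely that under H1--H4 together with M2(b), M2(c), M3 the operator $r_\alpha(T^\ast T)$ is invertible for every $\alpha\in(0,\alpha_0)$, and that if $\et(x^\ast,\delta)=o(\varphi(\delta))$ (resp.\ $O(\varphi(\delta))$) for some admissible index function $\varphi$ then there is an admissible \emph{a priori} rule $\tilde\alpha(\delta)$ with $\sup_{y^\delta\in\overline{B_\delta(Tx^\ast)}}\norm{R_{\tilde\alpha(\delta)}y^\delta-x^\ast}=o(\varphi(\delta))$ (resp.\ $O(\varphi(\delta))$); the proofs are the same as before, using M2(c) and the invertibility of $r_\alpha(T^\ast T)$ to rule out a positive limit point of the chosen parameters.

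First I would show $\psi\in\mathcal{U}_{X^\rho}(\et)$, the direct part. Writing $x=\rho(T^\ast T)v$ for $x\in X^\rho$, the bias term is controlled because $\rho$ is a qualification: $\norm{r_\alpha(T^\ast T)x}\le\gamma\rho(\alpha)\norm v$; the variance term is $O(\delta/\sqrt\alpha)$ by H2 and H4; balancing the two via the \emph{a priori} choice $\Theta(\alpha(\delta))=\delta$ and using that $\rho$ has local upper type $\beta$ (so $\rho\circ\Theta^{-1}$ is itself an index function and the rate transfers cleanly under scaling) yields $\sup_{y^\delta}\norm{R_{\alpha(\delta)}y^\delta-x}\le k(x)\,(\rho\circ\Theta^{-1})(\delta)$, hence $\et\overset{X^\rho}{\preceq}\psi$. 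This is the analogue of Lemma \ref{lem:cotasup-xmu}. Condition S2 is then immediate: $\psi$ does not depend on $x$, so it is trivially invariant over $X^\rho$. For S1, argue by contradiction exactly as in Theorem \ref{teo:sat-espectral}: if there were $x^\ast\neq 0$ in $X$ and $x\in X^\rho$ with $\limsup_{\delta\to 0^+}\et(x^\ast,\delta)/\psi(x,\delta)=0$, then $\et(x^\ast,\delta)=o((\rho\circ\Theta^{-1})(\delta))$, the $o$-version of the Lemma \ref{lem:o-O} analogue would furnish an admissible \emph{a priori} rule achieving this rate, and the saturation-barrier result — the maximal-qualification counterpart of Neubauer's Theorem 3.1 — would force $x^\ast=0$, a contradiction.

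Finally, for S3, suppose $\psi$ admits a proper extension: $M\supsetneqq X^\rho$ and $\tilde\psi\in\mathcal{U}_M(\et)$ with $\tilde\psi|_{X^\rho}=\psi$ satisfying S1 and S2 on $M$. Pick $x^\ast\in M\setminus X^\rho$, $x^\ast\neq 0$. From $\et\overset{\{x^\ast\}}{\preceq}\tilde\psi$ together with the invariance $\tilde\psi\overset{\{x^\ast\},X^\rho}{\preceq}\tilde\psi$ and $\tilde\psi|_{X^\rho}=\psi$ one gets $\et(x^\ast,\delta)=O((\rho\circ\Theta^{-1})(\delta))$; the $O$-version of the Lemma \ref{lem:o-O} analogue produces an admissible rule achieving this $O$-rate, and the converse characterization (the analogue of Neubauer's Corollary 2.6 for maximal qualification) yields $x^\ast\in\mathcal{R}(\rho(T^\ast T))$, i.e.\ $x^\ast\in X^\rho$, contradicting $x^\ast\in M\setminus X^\rho$. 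Hence $X^\rho$ is optimal for $\psi$, S3 holds, and $\psi$ is a saturation function of $\{R_\alpha\}$ on $X^\rho$.

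The main obstacle is establishing the two converse results invoked in S1 and S3. In the classical finite-order case the authors could cite Neubauer's 1994 paper verbatim, whereas in the maximal-qualification setting these saturation-barrier estimates must be re-derived for a general index function $\rho$; the delicate point is obtaining a sharp \emph{lower} bound for $\norm{r_\alpha(T^\ast T)x^\ast}$ in terms of the profile of the spectral measure of $x^\ast$, which is exactly where hypothesis M4 (convexity of $\lambda\mapsto\abs{r_\alpha(\lambda)}^2$, enabling a Jensen-type inequality), hypothesis M1 (geometric clustering of the eigenvalues of $TT^\ast$ near $0$, so that the bound can be tested along a geometric sequence of eigenvalues and pointwise information transferred to a source condition), and M2(d)--(e), M3 are needed.
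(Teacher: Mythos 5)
Your overall scheme (verify S1, S2, S3, with S2 trivial and S3 reduced to a converse result) is the same as the paper's, and your direct treatment of the upper bound is acceptable: instead of your bias--variance balancing with $\norm{r_\alpha(T^\ast T)\rho(T^\ast T)v}\le\gamma\rho(\alpha)\norm v$ and the choice $\Theta(\alpha(\delta))=\delta$, the paper gets $\psi\in\mathcal{U}_{X^\rho}(\et)$ by working on the source sets $X^{\rho,r}$ and invoking Theorem 2 of Math\'e--Pereverzev (2003); either route gives $\et\overset{X^\rho}{\preceq}\psi$.

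The genuine gap is that the two ingredients you yourself call ``the main obstacle'' are exactly the mathematical content of the theorem, and your proposal asserts them rather than proving them. For S1 you appeal to ``the maximal-qualification counterpart of Neubauer's Theorem 3.1,'' to be ``re-derived,'' but no derivation is given; in the paper no re-derivation is needed, because the lower bound on $\et(x^\ast,\delta)$ for every $x^\ast\neq0$ is obtained by citing Theorem 2.3 of Math\'e (2004), after first extracting from M2\,d),\,e) the auxiliary condition $\sup_{\lambda}\sqrt\lambda\,\abs{g_\alpha(\lambda)}\ge \gamma_2\sqrt{c_1}/\sqrt\alpha$ (the condition M5 under which, together with M4 and maximal qualification, Math\'e's result applies). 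Your sketch never explains how M4 or M2\,d),\,e) actually enter this step. For S3 you invoke ``the analogue of Neubauer's Corollary 2.6 for maximal qualification,'' i.e.\ the implication from $\underset{y^\delta\in\overline{B_\delta(Tx)}}{\sup}\;\underset{\alpha}{\inf}\norm{R_\alpha y^\delta-x}=O(\rho(\Theta^{-1}(\delta)))$ to $x\in\mathcal{R}(\rho(T^\ast T))$; this is precisely Lemma \ref{lem:sup-inf} (together with Lemma \ref{lema:res-reciproco}), whose proof is the bulk of the paper's work and is absent from your proposal: one constructs, for each eigenvalue $\bar\lambda\in\sigma_p(TT^\ast)$ with $c_1\bar\lambda\le\lambda_1$, the worst-case datum $\bar y^{\,\delta}=Tx-\delta\,(F_{c_1\bar\lambda}-F_{\bar\lambda})z$, shows the cross term $\seq{R_\alpha Tx-x,R_\alpha(\bar y^{\,\delta}-Tx)}$ is nonnegative (M2\,a) and the monotonicity of $\seq{F_\lambda Tx,G_{\bar\lambda}z}$), derives the lower bound $\min\{\norm{R_{\bar\lambda}Tx-x},\gamma_2\delta/\sqrt{\bar\lambda}\}$ (M2\,c),\,d),\,e)), solves the balance equation $\norm{R_\alpha Tx-x}^2=(\gamma_2\bar\delta)^2/\alpha$ to convert the assumed rate into $\norm{R_{\bar\lambda}Tx-x}=O(\rho(\bar\lambda))$ along the eigenvalues, and only then uses M1 together with the local upper type $\beta$ of $\rho$ to pass from the discrete sequence $\{\tilde\lambda_n\}$ to all small $\alpha$, so that Lemma \ref{lema:res-reciproco} (which uses the M3 lower bound) yields $x\in\mathcal{R}(\rho(T^\ast T))$. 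Your closing paragraph correctly guesses which hypotheses should intervene (though M4 belongs to the S1 step, not to this converse), but naming the needed lemmas and the hypotheses they should consume is not a proof of them; as it stands, S1 and S3 are unsupported.
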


\smallskip\smallskip
In order to prove this theorem we will previously need two
converse results that we establish in the following two Lemmas.

\smallskip
\begin{lem}\label{lema:res-reciproco}
Let $\{R_\alpha\}_{\alpha \in (0,\alpha_0)}$ be a family of
spectral regularization operators for $Tx=y$ and
$\rho:(0,\norm{T}^2]\rightarrow \R^+$ a strictly increasing
continuous function satisfying hypothesis \textbf{\textit{M3}} of
Theorem \ref{teo:sat-calmaxima}. If for some $x \in X$,
$\norm{R_\alpha Tx-x}=O(\rho(\alpha))$ for $\alpha\rightarrow
0^+$, then $x \in \mathcal{R}(\rho(T^\ast T)).$
\end{lem}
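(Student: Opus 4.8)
The plan is to reduce the statement to an estimate on the spectral measure $d\norm{E_\lambda x}^2$ of $x$ relative to $T^\ast T$ and then to exhibit an explicit preimage of $x$ under $\rho(T^\ast T)$. First I would rewrite the residual via the functional calculus: since $R_\alpha=g_\alpha(T^\ast T)T^\ast$ and $r_\alpha(\lambda)=1-\lambda g_\alpha(\lambda)$, one has $R_\alpha Tx-x=(g_\alpha(T^\ast T)T^\ast T-I)x=-\,r_\alpha(T^\ast T)x$, so the hypothesis $\norm{R_\alpha Tx-x}=O(\rho(\alpha))$ yields constants $C>0$ and $\bar\alpha\in(0,\alpha_0)$ with
\[
\int_0^{\norm{T}^2+} r_\alpha(\lambda)^2\, d\norm{E_\lambda x}^2 \;=\; \norm{r_\alpha(T^\ast T)x}^2 \;\le\; C^2\rho(\alpha)^2 \qquad \textrm{for all }\alpha\in(0,\bar\alpha).
\]
Under the standing assumption that $T$ is invertible, $\mathcal{N}(T^\ast T)=\{0\}$, so $E_{\{0\}}x=0$ and the measure $d\norm{E_\lambda x}^2$ is carried by $(0,\norm{T}^2]$; this is the only use of injectivity of $T$.

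The key step is to convert the lower bound in hypothesis \textbf{M3} into an upper bound for $1/\rho(\lambda)^2$. For $0<k\alpha\le\lambda\le\norm{T}^2$, \textbf{M3} gives $\abs{r_\alpha(\lambda)}\ge a\,\rho(\alpha)/\rho(\lambda)$, hence $1/\rho(\lambda)^2\le r_\alpha(\lambda)^2/(a^2\rho(\alpha)^2)$ on $[k\alpha,\norm{T}^2]$. Integrating over that interval and using the previous display,
\[
\int_{k\alpha}^{\norm{T}^2+}\frac{1}{\rho(\lambda)^2}\,d\norm{E_\lambda x}^2 \;\le\; \frac{1}{a^2\rho(\alpha)^2}\int_0^{\norm{T}^2+} r_\alpha(\lambda)^2\,d\norm{E_\lambda x}^2 \;\le\; \frac{C^2}{a^2}
\]
for every $\alpha$ small enough that $k\alpha\le\norm{T}^2$ and $\alpha<\bar\alpha$. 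Since the truncated integrals are nonnegative and increase as $\alpha\to 0^+$ (the domains $[k\alpha,\norm{T}^2]$ grow to $(0,\norm{T}^2]$), the monotone convergence theorem gives $\int_0^{\norm{T}^2+}\rho(\lambda)^{-2}\,d\norm{E_\lambda x}^2\le C^2/a^2<\infty$.

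Finally I would invoke the classical Picard-type characterization of $\mathcal{R}(\rho(T^\ast T))$. Because $\rho$ is continuous, strictly increasing and strictly positive on $(0,\norm{T}^2]$ with $\rho(0^+)=0$ (it is of local upper type, cf.\ Definition~\ref{def:local-type}), and because $\int_0^{\norm{T}^2+}\rho(\lambda)^{-2}\,d\norm{E_\lambda x}^2<\infty$, the element $w\doteq\int_{0^+}^{\norm{T}^2+}\rho(\lambda)^{-1}\,dE_\lambda x$ is well defined in $X$, with $\norm{w}^2=\int_0^{\norm{T}^2+}\rho(\lambda)^{-2}\,d\norm{E_\lambda x}^2$, and it satisfies $\rho(T^\ast T)w=\int_{0^+}^{\norm{T}^2+}dE_\lambda x=x$ (again using $E_{\{0\}}x=0$). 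Hence $x\in\mathcal{R}(\rho(T^\ast T))$, as claimed. I do not expect a genuine obstacle here: hypothesis \textbf{M3} is tailored precisely so that the lower bound on $\abs{r_\alpha(\lambda)}$ away from $\lambda=0$ compensates the vanishing of $\rho$ near the origin; the only points needing a little care are the bookkeeping at $\lambda=0$ (handled by injectivity of $T$) and the correct application of monotone convergence to the nested truncations.
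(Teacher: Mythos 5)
Your proposal is correct and follows essentially the same route as the paper: use the \textbf{\textit{M3}} lower bound $\abs{r_\alpha(\lambda)}\ge a\,\rho(\alpha)/\rho(\lambda)$ on $[k\alpha,\norm{T}^2]$ to bound $\int_{k\alpha}^{\norm{T}^2+}\rho^{-2}(\lambda)\,d\norm{E_\lambda x}^2$ by $C^2/a^2$, let $\alpha\to 0^+$, and then exhibit $w=\int_0^{\norm{T}^2+}\rho^{-1}(\lambda)\,dE_\lambda x$ with $\rho(T^\ast T)w=x$. Your extra remarks on injectivity (the measure having no atom at $\lambda=0$) and on monotone convergence only make explicit details the paper leaves implicit.
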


\smallskip
\begin{proof}
From hypothesis \textbf{\textit{M3}} it follows that
\begin{equation}\label{eq:13}
\norm{R_\alpha Tx-x}^2 = \int_0^{\norm{T}^2+}r_\alpha^2(\lambda)\,
d\norm{E_\lambda x}^2\geq a^2\,\rho^2(\alpha)\int_{k
\,\alpha}^{\norm{T}^2+}\rho^{-2}(\lambda)\,d\norm{E_\lambda x}^2.
\end{equation}
Since $\norm{R_{\alpha} Tx-x}=O(\rho(\alpha))$ for
$\alpha\rightarrow 0^+$, it then follows that there are constants
$C>0$ and $\alpha^\ast$, $0<\alpha^\ast \leq \alpha_0$ such that
\begin{equation*}
\int_{k\,\alpha}^{\norm{T}^2+}\rho^{-2}(\lambda)\,d\norm{E_\lambda
x}^2 \leq \frac{\norm{R_\alpha Tx-x}^2}{a^2\rho^2(\alpha)}\leq
\frac{C^2}{a^2} \quad \textrm{for every}\; \alpha \in (0,
\alpha^\ast).
\end{equation*}
Taking limit for $\alpha \rightarrow 0^+$ we obtain that
$\int_{0}^{\norm{T}^2+}\rho^{-2}(\lambda)\,d\norm{E_\lambda x}^2<
+ \infty$, from which it follows that $w\doteq
\int_0^{\norm{T}^2+}\rho^{-1}(\lambda)\,dE_{\lambda}\,x \in X.$
Then,
\begin{equation*}
\rho(T^\ast
T)w=\int_0^{\norm{T}^2+}\rho(\lambda)\rho^{-1}(\lambda)\,dE_{\lambda}x=x
\end{equation*}
and therefore $x \in \mathcal{R}(\rho(T^\ast T))$.\hfill
\end{proof}

\smallskip
\begin{lem}\label{lem:sup-inf} Under the same hypotheses of Theorem
\ref{teo:sat-calmaxima}, if for some $x \in X$ we have that
\begin{equation}\label{eq:19} \underset{y^\delta \in
\overline{B_\delta(Tx)}}{\sup}\;\underset{\alpha \in
(0,\alpha_0)}{\inf} \norm{R_{\alpha}
y^\delta-x}=O(\rho(\Theta^{-1}(\delta))) \quad \textrm{when }
\delta \rightarrow 0^+,
\end{equation}
then $x \in \mathcal{R}(\rho(T^\ast T))$.
\end{lem}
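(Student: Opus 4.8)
The plan is to deduce this ``sup-inf'' statement from the previously established ``inf-sup'' converse, namely Lemma \ref{lema:res-reciproco}, together with the minimax-type inequality relating the two orders of quantifiers. First I would observe that for every fixed noise level $\delta$ one always has the elementary inequality
\begin{equation*}
\underset{\alpha \in (0,\alpha_0)}{\inf}\;\underset{y^\delta \in \overline{B_\delta(Tx)}}{\sup} \norm{R_{\alpha} y^\delta-x} \;\geq\; \underset{y^\delta \in \overline{B_\delta(Tx)}}{\sup}\;\underset{\alpha \in (0,\alpha_0)}{\inf} \norm{R_{\alpha} y^\delta-x},
\end{equation*}
so the hypothesis (\ref{eq:19}) is in fact a statement about a quantity that is potentially \emph{smaller} than $\et(x,\delta)$; hence one cannot directly invoke Lemma \ref{lem:o-O}. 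Instead, the route is to extract from (\ref{eq:19}) a pointwise bound at the specific observation $y^\delta = Tx$ (the noise-free datum, which always lies in $\overline{B_\delta(Tx)}$), giving $\underset{\alpha \in (0,\alpha_0)}{\inf}\norm{R_\alpha Tx - x} = O(\rho(\Theta^{-1}(\delta)))$ as $\delta\to 0^+$.

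Next I would convert this into an estimate in the regularization parameter $\alpha$ rather than in $\delta$. Since $\Theta(t)=\sqrt t\,\rho(t)$ is continuous and strictly increasing with $\Theta(0^+)=0$, as $\delta$ ranges over a right neighbourhood of $0$ the value $\Theta^{-1}(\delta)$ ranges over a right neighbourhood of $0$; writing $\alpha = \Theta^{-1}(\delta)$, i.e. $\delta = \Theta(\alpha)$, the bound becomes $\underset{\beta \in (0,\alpha_0)}{\inf}\norm{R_\beta Tx - x} = O(\rho(\alpha))$ for $\alpha\to 0^+$. In particular, for each such $\alpha$ there is \emph{some} admissible $\beta=\beta(\alpha)$ with $\norm{R_{\beta(\alpha)} Tx - x} \le C\rho(\alpha)$; using monotonicity (hypothesis \textbf{M2.c}), exactly as in the proof of Lemma \ref{lem:o-O}, one can replace $\beta(\alpha)$ by an \emph{admissible} parameter choice rule $\tilde\alpha$ tending to $0$, so that $\norm{R_{\tilde\alpha} Tx - x} = O(\rho(\tilde\alpha))$. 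The argument here is the same bounded-subsequence/resolvent-invertibility trick already used: if the limit of the chosen parameters were a positive $\alpha^\ast$, then $\norm{r_{\alpha^\ast/2}(T^\ast T)x}=0$ by monotonicity and passing to the limit, contradicting invertibility of $r_{\alpha^\ast/2}(T^\ast T)$ from Lemma \ref{lem:r-invertible}, whence the limiting parameter must be $0$.

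Having produced a genuine admissible choice rule along which $\norm{R_{\tilde\alpha(\delta)} Tx - x}=O(\rho(\tilde\alpha(\delta)))$, I would then re-parametrize once more: since $\tilde\alpha(\delta)\to 0$, this says precisely that $\norm{R_\alpha Tx - x}=O(\rho(\alpha))$ holds along the sequence of parameter values actually attained, and a standard monotonicity/interpolation argument (again via \textbf{M2.c} and the monotone growth of $\rho$) upgrades this to $\norm{R_\alpha Tx - x}=O(\rho(\alpha))$ for \emph{all} $\alpha\to 0^+$. At that point Lemma \ref{lema:res-reciproco} applies verbatim and yields $x\in\mathcal R(\rho(T^\ast T))$, which is the conclusion.

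The main obstacle I anticipate is the passage from ``$\inf$ over $\alpha$ is $O(\rho(\Theta^{-1}(\delta)))$'' back to ``$\norm{R_\alpha Tx-x}=O(\rho(\alpha))$ for every $\alpha\to 0^+$'': the infimum could a priori be realized along a sparse set of parameter values while $\norm{R_\alpha Tx - x}$ is large off that set. Ruling this out is exactly what the monotonicity hypothesis \textbf{M2.c} (together with the $\Delta_\beta$/local-upper-type property of $\rho$ from \textbf{M3} controlling the ratio $\rho(\alpha)/\rho(\beta)$ for comparable $\alpha,\beta$) is designed for; the careful bookkeeping to convert the subsequential bound into a bound valid for all small $\alpha$, while keeping the replacement parameter rule admissible, is the technical heart of the argument. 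Once that is in place the rest is a direct appeal to the two cited lemmas.
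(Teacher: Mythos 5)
There is a genuine gap, and it occurs at the very first substantive step. From the hypothesis (\ref{eq:19}) you extract a ``pointwise bound at $y^\delta=Tx$'', namely $\underset{\alpha\in(0,\alpha_0)}{\inf}\norm{R_\alpha Tx-x}=O(\rho(\Theta^{-1}(\delta)))$. But the left-hand side of that bound does not depend on $\delta$ at all, and since $R_\alpha Tx\to x$ as $\alpha\to 0^+$ for exact data, it equals $0$ for \emph{every} $x\in X$. So the only information you have retained from (\ref{eq:19}) is vacuous: it holds independently of the hypothesis, and consequently your argument, if it worked, would prove $x\in\mathcal{R}(\rho(T^\ast T))$ for every $x\in X$, which is false (this range is a proper subspace when $T$ is compact). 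The subsequent steps cannot repair this: the $\beta(\alpha)$ you obtain with $\norm{R_{\beta(\alpha)}Tx-x}\leq C\rho(\alpha)$ may be arbitrarily small compared with $\alpha$, and the monotonicity \textbf{\textit{M2\;c)}} runs the wrong way for your ``upgrade'' --- it gives $\norm{R_\alpha Tx-x}\geq\norm{R_{\beta}Tx-x}$ for $\alpha\geq\beta$, so smallness at a tiny $\beta$ says nothing about the size at $\alpha$. Likewise, the bounded-subsequence/invertibility trick from Lemma \ref{lem:o-O} is of no help here, because in that lemma the supremum over noisy data is taken \emph{inside} the infimum and therefore retains the noise-level information; with exact data only, that information is gone.

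The missing idea is that for the sup--inf quantity one must exhibit an adversarial noisy datum, not the noise-free one. The paper's proof constructs, for each eigenvalue $\bar\lambda\in\sigma_p(TT^\ast)$ with $c_1\bar\lambda\leq\lambda_1$, the perturbation $\bar y^{\,\delta}=Tx-\delta G_{\bar\lambda}z$ supported on the spectral band $[\bar\lambda,c_1\bar\lambda]$ of $TT^\ast$, shows via the sign condition (\ref{eq:30}) and hypotheses \textbf{\textit{M2\;c)-e)}} that
\begin{equation*}
\underset{\alpha\in(0,\alpha_0)}{\inf}\norm{R_\alpha\bar y^{\,\delta}-x}\;\geq\;\min\left\{\norm{R_{\bar\lambda}Tx-x},\,\gamma_2\,\delta/\sqrt{\bar\lambda}\right\},
\end{equation*}
and then balances the two terms by choosing $\bar\delta$ so that $\bar\lambda$ solves (\ref{eq:14}); this is what converts (\ref{eq:19}) into $\norm{R_{\bar\lambda}Tx-x}=O(\rho(\bar\lambda))$ along the eigenvalues, after which \textbf{\textit{M1}} and the local upper type $\beta$ property of $\rho$ extend the estimate to all small $\alpha$, and Lemma \ref{lema:res-reciproco} concludes. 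Note that this is precisely where the compactness of $T$ and hypotheses \textbf{\textit{M1}}, \textbf{\textit{M2\;d)}}, \textbf{\textit{M2\;e)}} enter, none of which play any role in your proposal --- a further sign that the worst-case construction, which is the technical heart of the lemma, is absent from it.
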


\smallskip
\begin{proof}
Without loss of generality we assume that
$\alpha_0\le\min\{\frac{\lambda_1}{c_1}, \frac{\lambda_1}{k}\}$
and that $x \neq 0$ (if $x=0$ the result is trivial).

Let $\bar{\lambda}\in \sigma_p(TT^\ast)$ be such that
$0<c_1\,\bar{\lambda}\leq \lambda_1$ (the compactness of $T$
guarantees the existence of such $\bar{\lambda}$), and define
\begin{equation*}
    \bar{\delta}=\bar{\delta}(\bar\lambda)\doteq
    \frac{\bar{\lambda}^{1/2}}{\gamma_2}\norm{R_{\bar{\lambda}}Tx-x}.
\end{equation*}
Then, clearly the equation
\begin{equation}\label{eq:14}
\norm{R_\alpha Tx -x}^2=\frac{(\gamma_2\,\bar{\delta})^2}{\alpha}
\end{equation}
in the unknown $\alpha$, has $\alpha=\bar{\lambda}$ as a solution.
Moreover, from the hypothesis \textbf{\textit{M2\;c)}} and given
that $x \neq 0$, it follows that $\alpha =\bar{\lambda}$ is the
unique solution of (\ref{eq:14}). Note also that
$\bar{\delta}\rightarrow 0^+$ if (and only if)
$\bar{\lambda}\rightarrow 0^+$.

Now, for $\delta>0$ define
\begin{equation}\label{eq:16}
    \bar{y}^{\,\delta}\doteq Tx-\delta G_{\bar{\lambda}}z, \quad
    \forall\; \delta>0,
\end{equation}
where $G_{\bar{\lambda}}\doteq
F_{c_1\,\bar{\lambda}}-F_{\bar{\lambda}}$ and $\{F_\lambda\}$ is
the spectral family associated to $TT^\ast$ and
\begin{equation*}
    z\doteq \left\{%
\begin{array}{ll}
    \norm{G_{\bar{\lambda}}Tx}^{-1}Tx, & \hbox{if $G_{\bar{\lambda}}Tx\neq
0$,} \\
    \textrm{arbitrary with} \norm{G_{\bar{\lambda}}z}=1, &
    \hbox{in other case.} \\
\end{array}%
\right.
\end{equation*}
Note that since $\bar{\lambda}\in \sigma_p(TT^\ast)$ and $c_1>1$
it follows that $G_{\bar{\lambda}}$ is not the null operator and
therefore the definition makes sense. Note also that
$\norm{\bar{y}^{\,\delta}-Tx}=\delta$, which implies that
$\bar{y}^{\,\delta} \in \overline{B_\delta(Tx)}$.

Now, from (\ref{eq:Ralfa-galfa}), (\ref{eq:16}) and from the fact
that $g_\alpha(T^\ast T)T^\ast= T^\ast g_\alpha(TT^\ast)$ it
follows that for every $\alpha \in (0,\alpha_0)$ and $\delta>0$,
\begin{eqnarray}\label{eq:12} \nonumber
\seq{R_\alpha Tx-x\right.\left.,R_\alpha (\bar{y}^{\,\delta}-Tx)}
&=& \seq{g_\alpha(T^\ast T)T^\ast Tx-x
,-g_\alpha(T^\ast T)T^\ast\,\delta G_{\bar{\lambda}}z} \\
\nonumber  &=& \delta \seq{g_\alpha(T^\ast T)T^\ast Tx-x,-T^\ast
g_\alpha(TT^\ast)G_{\bar{\lambda}}z} \\ \nonumber &=& \delta
\seq{Tg_\alpha(T^\ast T)T^\ast
Tx-Tx,-g_\alpha(TT^\ast)G_{\bar{\lambda}}z} \\ \nonumber  &=&
\delta \seq{(TT^\ast
g_\alpha(TT^\ast)-I)Tx,-g_\alpha(TT^\ast)G_{\bar{\lambda}}z}
\\ \nonumber
&=& \delta
\seq{-r_\alpha(TT^\ast)Tx,-g_\alpha(TT^\ast)G_{\bar{\lambda}}z}\\
&=& \delta \int_0^{\norm{T}^2+}
   r_\alpha(\lambda)g_\alpha(\lambda)\,d\seq{F_\lambda
   Tx,G_{\bar{\lambda}}z}.
\end{eqnarray}

Now since $c_1\bar{\lambda}\leq \lambda_1$, it follows from
hypothesis \textbf{\textit{M2\;a)}} that both $g_\alpha(\lambda)$
and $r_\alpha(\lambda)$ are nonnegative for all $\lambda \in
[0,c_1\bar{\lambda}]$. On the other hand, from the definitions of
$G_{\bar{\lambda}}$ and $z$ it follows immediately that the
function $h(\lambda)\doteq \seq{F_\lambda Tx,G_{\bar{\lambda}}z}$
for $\lambda\in[0,c_1\bar\lambda]$ is real and non-decreasing and
therefore
\begin{equation}\label{eq:28}
    \int_0^{c_1\bar{\lambda}+}r_\alpha(\lambda)g_\alpha(\lambda)\,d\seq{F_\lambda
   Tx,G_{\bar{\lambda}}z}\geq 0.
\end{equation}
On the other hand, since $h(\lambda)= \seq{Tx,F_\lambda
G_{\bar{\lambda}}z}$ and $F_\lambda
G_{\bar{\lambda}}=G_{\bar{\lambda}}$ for every $\lambda \geq
c_1\bar{\lambda}$, it follows that $h(\lambda)$ is constant for
every $\lambda \geq c_1\bar{\lambda}$ and therefore
\begin{equation}\label{eq:29}
 \int_{c_1\bar{\lambda}+}^{\norm{T}^2+}r_\alpha(\lambda)g_\alpha(\lambda)\,d\seq{F_\lambda
   Tx,G_{\bar{\lambda}}z}= 0.
\end{equation}
From (\ref{eq:28}) and (\ref{eq:29}) we have that
\begin{equation*}
\int_0^{\norm{T}^2+}
   r_\alpha(\lambda)g_\alpha(\lambda)\,d\seq{F_\lambda
   Tx,G_{\bar{\lambda}}z}\geq0,
\end{equation*}
which, by virtue of (\ref{eq:12}), implies that
\begin{equation}\label{eq:30}
\seq{R_\alpha Tx-x,R_\alpha (\bar{y}^{\,\delta}-Tx)}\geq 0.
\end{equation}

By using once again (\ref{eq:Ralfa-galfa}) and (\ref{eq:16})
together with (\ref{eq:30}) it then follows that for every
$\alpha\in (0,\alpha_0)$, for every $\bar{\lambda}\in
\sigma_p(TT^\ast)$ such that $c_1\bar{\lambda}\leq \lambda_1$ and
for every $\delta>0$,
\begin{eqnarray}\label{eq:20}\nonumber
\norm{R_\alpha \bar{y}^{\,\delta}-x}^2 &=& \norm{R_\alpha Tx-x}^2
+\norm{R_\alpha (\bar{y}^{\,\delta}-Tx)}^2 + 2\seq{R_\alpha
Tx-x,R_\alpha (\bar{y}^{\,\delta}-Tx)}\\ \nonumber    &=&
\norm{R_\alpha Tx-x}^2 + \delta^2 \norm{g_\alpha(T^\ast T)T^\ast
G_{\bar{\lambda}}z}^2 + 2\seq{R_\alpha Tx-x,R_\alpha
(\bar{y}^{\,\delta}-Tx)}\\ \nonumber &\geq&\norm{R_\alpha Tx-x}^2
+ \delta^2 \norm{g_\alpha(T^\ast T)T^\ast G_{\bar{\lambda}}z}^2\\
\nonumber  &=& \norm{R_\alpha Tx-x}^2 + \delta^2
\int_0^{{\|T\|^2}\,^+}\lambda\,
g_{\alpha}^2(\lambda)\, d\norm{F_\lambda G_{\bar{\lambda}}z}^2 \\
   &\geq& \norm{R_\alpha Tx-x}^2 + \delta^2
\int_{\bar{\lambda}}^{c_1\,\bar{\lambda}}\lambda
\,g_{\alpha}^2(\lambda)\, d\norm{F_\lambda G_{\bar{\lambda}}z}^2.
\end{eqnarray}

We now consider two different possible cases.

\underline{Case I}: $\alpha \leq \bar{\lambda}$. Since
$c_1\bar{\lambda}\leq\lambda_1$ and $c_1>1$, it follows from
hypothesis \textbf{\textit{M2\;e)}} that
\begin{equation}\label{eq:17}
g_\alpha(\lambda)\geq g_\alpha(c_1\bar{\lambda})\geq
g_\alpha(\lambda_1) \quad\textrm{for every}\quad \lambda \in
[\bar{\lambda},c_1\,\bar{\lambda}].
\end{equation}
On the other hand, from hypothesis \textbf{\textit{M2\;a)}} it
follows that $r_\alpha(\lambda_1)\leq 1$, which implies that
$\lambda_1\,g_\alpha(\lambda_1)\geq 0$ and therefore,
$g_\alpha(\lambda_1)\geq 0.$ It then follows from (\ref{eq:17})
that $g_\alpha^2(\lambda)\geq g_\alpha^2(c_1\,\bar{\lambda})$ for
every $\lambda \in [\bar{\lambda},c_1\,\bar{\lambda}].$ Then,
\begin{eqnarray} \label{eq:21} \nonumber
    \int_{\bar{\lambda}}^{c_1\,\bar{\lambda}}\lambda
\,g_{\alpha}^2(\lambda)\, d\norm{F_\lambda
    G_{\bar{\lambda}}z}^2 &\geq& \bar{\lambda}
\,g_{\alpha}^2(c_1\,\bar{\lambda})\int_{\bar{\lambda}}^{c_1\,\bar{\lambda}}
d\norm{F_\lambda
    G_{\bar{\lambda}}z}^2 \\
    &=& \bar{\lambda}\,g_{\alpha}^2(c_1\,\bar{\lambda}),
\end{eqnarray}
where the last equality follows from the fact that
$\int_{\bar{\lambda}}^{c_1\,\bar{\lambda}} d\norm{F_\lambda
    G_{\bar{\lambda}}z}^2=1$, which is a consequence of the fact that
    $\int_{\bar{\lambda}}^{c_1\,\bar{\lambda}} d\norm{F_\lambda
    G_{\bar{\lambda}}z}^2=\norm{F_{c_1\,\bar{\lambda}}
    G_{\bar{\lambda}}z}^2-\norm{F_{\bar{\lambda}}
    G_{\bar{\lambda}}z}^2$, from the definition of
    $G_{\bar{\lambda}}$, from the fact that $F_\lambda
    F_\mu=F_{\min\{\lambda,\mu\}}$ for every $\lambda,\mu \in \R$
    and the fact that $\norm{G_{\bar{\lambda}}z}=1$.

At the same time, the hypotheses \textbf{\textit{M2\;a)}} and
\textbf{\textit{M2\;c)}} imply that $g_\alpha(\lambda)$ is
monotone decreasing as a function of $\alpha$ for each $\lambda
\in [0,\lambda_1]$. Since $\alpha\leq \bar{\lambda}$ and
$c_1\,\bar{\lambda}\leq \lambda_1$, we then have that
\begin{equation}\label{eq:18} g_\alpha(c_1\,\bar{\lambda})\geq
g_{\bar{\lambda}}(c_1\,\bar{\lambda}),
\end{equation}
and from hypothesis \textbf{\textit{M2\;d)}} we also have that
\begin{equation}\label{eq:18-1}
g_{\bar{\lambda}}(c_1\,\bar{\lambda})\geq
\gamma_2/\bar{\lambda}>0.
\end{equation}
From (\ref{eq:18}) and (\ref{eq:18-1}) we conclude that
\begin{equation}\label{eq:18-2}
g_\alpha^2(c_1\,\bar{\lambda})\geq
\left(\gamma_2/\bar{\lambda}\right)^2.
\end{equation}

Substituting (\ref{eq:18-2}) into (\ref{eq:21}) we obtain
\begin{equation*}
\int_{\bar{\lambda}}^{c_1\,\bar{\lambda}}\lambda
\,g_{\alpha}^2(\lambda)\, d\norm{F_\lambda
    G_{\bar{\lambda}}z}^2 \geq \gamma_2^2/\bar{\lambda},
\end{equation*}
which, together with (\ref{eq:20}) imply that if $\alpha\leq
\bar{\lambda}$, then $\norm{R_\alpha \bar{y}^{\,\delta}-x}^2\geq
(\gamma_2\,\delta)^2/\bar{\lambda}$.

\smallskip
\underline{Case II}: $\alpha>\bar{\lambda}$. In this case, it
follows from hypothesis \textbf{\textit{M2\;c)}} that
$r_\alpha^2(\lambda)\geq r_{\bar{\lambda}}^2(\lambda)$ for every
$\lambda \in (0,\norm{T}^2]$. Then,
\begin{equation*}
\norm{R_\alpha Tx-x}^2=\int_0^{\|T\|^{2\,+}} r_\alpha^2(\lambda)\,
d\norm{E_\lambda x}^2 \geq \int_0^{\|T\|^{2\,+}}
r_{\bar{\lambda}}^2(\lambda)\, d\norm{E_\lambda
x}^2=\norm{R_{\bar{\lambda}} Tx-x}^2,
\end{equation*}
which, together with (\ref{eq:20}) imply that $\norm{R_\alpha
\bar{y}^{\,\delta}-x}^2\geq \norm{R_{\bar{\lambda}} Tx-x}^2$.

\smallskip
Summarizing the results obtained in cases I and II, we can write:
\begin{eqnarray} \label{eq:18-3} \nonumber
\norm{R_\alpha \bar{y}^{\,\delta}-x}^2&\geq& \left\{%
\begin{array}{ll}
    \norm{R_{\bar{\lambda}} Tx-x}^2, & \hbox{if $\alpha >\bar{\lambda}$} \\
    (\gamma_2\,\delta)^2/\bar{\lambda}, & \hbox{if $\alpha \leq
\bar{\lambda}.$} \\
\end{array}%
\right. \\
&\geq&\min\{\norm{R_{\bar{\lambda}} Tx-x}^2,
(\gamma_2\,\delta)^2/\bar{\lambda}\},
\end{eqnarray}
which is valid for every $\alpha \in (0,\alpha_0)$,
$\bar{\lambda}\in \sigma_p(TT^\ast)$ such that
$c_1\bar{\lambda}\leq \lambda_1$ and for every $\delta>0$. Then
\begin{eqnarray*}
\min\left\{\norm{R_{\bar{\lambda}} Tx-x}, \gamma_2
\,\delta/\sqrt{\bar{\lambda}}\right\}&=&
\left(\min\{\norm{R_{\bar{\lambda}} Tx-x}^2,
(\gamma_2\,\delta)^2/\bar{\lambda}\}\right)^{1/2} \\
&\leq& \underset{\alpha \in (0,\alpha_0)}{\inf} \norm{R_\alpha
\bar{y}^{\,\delta}-x}\quad \quad \qquad \qquad \qquad
\parbox{2.5cm}{(by (\ref{eq:18-3}))}  \\
&\leq& \underset{y^\delta \in \overline{B_\delta(Tx)}}{\sup}\; \underset{\alpha \in (0,\alpha_0)}{\inf} \norm{R_\alpha y^\delta-x}\quad \; \parbox{4cm}{(since $\bar{y}^{\,\delta} \in \overline{B_\delta(Tx)}$)} \\
&=& O(\rho(\Theta^{-1}(\delta))) \quad \textrm{for} \quad \delta
\rightarrow
 0^+ \quad \parbox{4cm}{(by hypothesis).}
\end{eqnarray*}

Now, given that $\bar{\lambda}=\alpha(\bar{\delta})$ solves
equation (\ref{eq:14}), from the previous inequality we have that
\begin{equation}\label{eq:22}
\norm{R_{\alpha(\bar{\delta})} Tx-x}=\gamma_2
\,\bar{\delta}/\sqrt{\bar{\lambda}}=O(\rho(\Theta^{-1}(\bar{\delta})))
\quad \textrm{for} \quad \bar{\delta} \rightarrow 0^+,
\end{equation}
which implies that
\begin{equation}\label{eq:23}
\frac{\bar{\delta}}{\rho(\Theta^{-1}(\bar{\delta}))}=O\left(\sqrt{\alpha(\bar{\delta})}\right)
\quad \textrm{for} \quad \bar{\delta} \rightarrow 0^+.
\end{equation}
Since $\delta=\Theta(\Theta^{-1}(\delta))$ it follows from the
definition of $\Theta$ that
$\delta=\sqrt{\Theta^{-1}(\delta)}\,\rho(\Theta^{-1}(\delta))$.
Then, it follows from (\ref{eq:23}) that
$\sqrt{\Theta^{-1}(\bar{\delta})}=O(\sqrt{\alpha(\bar{\delta})})$
for $\bar{\delta} \rightarrow 0^+.$ From this and (\ref{eq:22}) we
then deduce that:
\begin{equation}\label{eq:27}
\norm{R_{\alpha(\bar{\delta})}Tx-x}=O(\rho(\alpha(\bar{\delta})))
\textrm{\;\, for\;\,} \bar{\delta} \rightarrow 0^+ \;\forall\;
\alpha(\bar{\delta})\in \sigma_p(TT^\ast):
c_1\,\alpha(\bar{\delta})\leq \lambda_1.
\end{equation}

\medskip

Now, let $\alpha\in \R^+$ such that $\alpha \leq \underset{j \in
\N}{\max}\{\tilde{\lambda}_j:\tilde{\lambda}_j\leq
\frac{\lambda_1}{c_1}\}$. Then, there exist
$n=n(\alpha)\in\mathbb{N}$ such that $\tilde{\lambda}_{n+1}<\alpha
\leq \tilde{\lambda}_n\le\frac{\lambda_1}{c_1}$. Note here that
$n\to\infty$ if (and only if) $\alpha\to 0^+$.

From hypothesis \textbf{\textit{M2\;c)}} and the fact that
$\tilde{\lambda}_n\in \sigma_p(TT^\ast)$ and
$\tilde{\lambda}_n\leq\frac{\lambda_1}{c_1}$ it follows that
\begin{eqnarray}\label{eq:24}\nonumber
  \norm{R_\alpha Tx-x}^2 &=& \int_0^{\|T\|^{2\,+}} r_\alpha^2(\lambda)\, d\norm{E_\lambda
  x}^2\\ \nonumber
   &\leq& \int_0^{\|T\|^{2\,+}} r_{\tilde{\lambda}_n}^2(\lambda)\,d\norm{E_\lambda
   x}^2\\ \nonumber
   &=&\norm{R_{\tilde{\lambda}_n} Tx-x}^2\\
   &=&O(\rho^2(\tilde{\lambda}_n)),\quad \parbox{4cm}{(by virtue of (\ref{eq:27})).}
\end{eqnarray}
From hypothesis \textbf{\textit{M1}} we have that
$\tilde{\lambda}_n\leq c\,\tilde{\lambda}_{n+1}$ and since $\rho$ is
strictly increasing and positive (by hypothesis
\textbf{\textit{M3}}) it follows that for all $n$ big enough, more
precisely for all $n$ such that $c\,
\tilde{\lambda}_{n+1}\leq\norm{T}^2$,
\begin{equation}\label{eq:25}
\rho^2(\tilde{\lambda}_n)\leq \rho^2(c\,\tilde{\lambda}_{n+1}).
\end{equation}
Now since $c\geq 1$ and $\rho$ is of local upper type $\beta$ for
some $\beta\geq 0$ (hypothesis \textbf{\textit{M3}}), there exists
a positive constant $d$ such that
\begin{equation}\label{eq:26}
\rho(c\,\tilde{\lambda}_{n+1})\leq d\,c^\beta\rho\left(\frac 1 c
\, c\,\tilde{\lambda}_{n+1}\right)=
d\,c^\beta\rho(\tilde{\lambda}_{n+1}).
\end{equation}
 From (\ref{eq:24}), (\ref{eq:25}), (\ref{eq:26}) and from the fact
that $\rho(\tilde{\lambda}_{n+1})< \rho(\alpha)$ it follows that
$\norm{R_\alpha Tx-x}=O(\rho(\alpha))$ for $\alpha \rightarrow
0^+$. Therefore, Lemma \ref{lema:res-reciproco} now implies that
$x \in \mathcal{R}(\rho(T^\ast T))$. This concludes the proof of
the Lemma.\hfill
\end{proof}

\smallskip\smallskip
\begin{rem}
From the definition of qualification (Definition
\ref{def:calif-mathe}) it follows that
\begin{equation*}
\norm{R_\alpha Tx-x}^2 \leq \gamma^2\,\rho^2(\alpha)
\int_0^{+\infty} \rho^{-2}(\lambda)\,d\norm{E_\lambda x}^2.
\end{equation*}
Therefore, in Lemma \ref{lem:sup-inf}, the hypothesis
\textbf{\textit{M1}} and the assumption that $\rho$ be of local
upper type $\beta$ for some $\beta\geq 0$ can be substituted by
the requirement that $\rho(T^\ast T)$ be invertible, or
equivalently, that $\rho^{-2}(\lambda)$ be integrable with respect
to the measure $d\norm{E_\lambda x}^2$ for every $x \in X$.
\end{rem}

\medskip
\medskip
\textit{Proof of Theorem \ref{teo:sat-calmaxima}.} First we note
that from hypotheses \textbf{\textit{M2\;d)}} and
\textbf{\textit{M2\;e)}} it follows easily that
$$
\textbf{\textit{M5}}:\quad
\underset{\lambda\in(0,\|T\|^2]}{\text{sup}}\;
\sqrt\lambda\,|g_\alpha(\lambda)|\ge\frac{b}{\sqrt\alpha}\quad\text{for
every } \alpha\in(0,\alpha_0),
$$
where $b=\gamma_2\sqrt{c_1}$. As in Lemma \ref{lem:sup-inf},
without loss of generality we assume that
$\alpha_0\le\min\{\frac{\lambda_1}{c_1}, \frac{\lambda_1}{k}\}$.
First we will prove that $\psi(x,\delta)\doteq (\rho \circ
\Theta^{-1})(\delta)$ for $x \in X^\rho$ and $\delta \in (0,\Theta
(\alpha_0))$, is an upper bound of convergence for the total error
of $\{R_\alpha\}_{\alpha\in (0,\alpha_0)}$ in $X^\rho$, that is,
we will show that $\psi \in \mathcal{U}_{X^\rho}(\et)$. For every
$r\geq 0$ we define the source sets $X^{\rho,r}\doteq \{x \in
X:x=\rho(T^\ast T)\xi$, $\norm{\xi}\leq r \}$. Let $x \in X^\rho$,
then there exists $r\geq 1$ such that $x \in X^{\rho,r}$. Since
$\Theta$ is continuous and strictly increasing in $(0,\alpha_0)$,
there exists a unique $\tilde{\alpha} \in (0,\alpha_0)$ such that
$x\in X^{\rho,r}$ and $\Theta(\tilde{\alpha})=\frac{\delta}{r}.$
Therefore,
\begin{eqnarray}\label{eq:Trho}\nonumber
  \et(x,\delta) &=& \underset{\alpha \in
(0,\alpha_0)}{\inf}\;\underset{y^\delta \in
  \overline{B_\delta(Tx)}}{\sup}\norm{R_\alpha\,
y^\delta-x} \\ \nonumber
   &\leq & \underset{y^\delta \in
   \overline{B_\delta(Tx)}}{\sup}\norm{R_{\tilde{\alpha}}\,
y^\delta - x} \\
   &\leq& \underset{x \in X^{\rho,r}}{\sup}\;\underset{y^\delta \in
   \overline{B_\delta(Tx)}}{\sup}\norm{R_{\tilde{\alpha}}
\,y^\delta-x}.
\end{eqnarray}
On the other hand, from hypotheses \textit{H1-H4}, the fact that
the function $\rho$ is qualification of $\{R_\alpha\}$, the fact
that $\rho$ trivially \textit{covers} $\rho$ with constant equals
to unity (see \cite{ref:Mathe-Pereverzev-2003}, Definition 2) and
given that $\Theta(\tilde{\alpha})=\frac{\delta}{r}$, it follows
by virtue of Theorem 2 in \cite{ref:Mathe-Pereverzev-2003}, that
there exists a positive constant $K$, independent of $\delta$ such
that
\begin{equation}\label{eq:31}
\underset{x \in X^{\rho,r}}{\sup}\;\underset{y^\delta \in
   \overline{B_\delta(Tx)}}{\sup}\norm{R_{\tilde{\alpha}}
\,y^\delta-x}\leq
K\,\rho\left(\Theta^{-1}\left(\frac{\delta}{r}\right)\right),\;\textrm{for}\;
0<\delta\leq r\,\Theta(\norm{T}^2).
\end{equation}
From (\ref{eq:Trho}) and (\ref{eq:31}) it follows that for every
$\delta \in (0,\Theta(\alpha_0))$,
\begin{equation*}
    \et(x,\delta)\leq
    K\,\rho\left(\Theta^{-1}\left(\frac{\delta}{r}\right)\right)\leq
    K\,\rho(\Theta^{-1}(\delta))= K\, \psi(x,\delta),
\end{equation*}
where the last inequality follows from the fact that $r\geq 1$ and
both $\rho$ and $\Theta^{-1}$ are increasing functions. This
proves that $\psi \in \mathcal{U}_{X^\rho}(\et)$.

Next we will see that $\psi$ satisfies condition \textit{S1} of
saturation on $X^\rho$. From hypotheses \textit{H1-H4},
\textbf{\textit{M4}} and \textbf{\textit{M5}} and the fact that
$\rho$ is maximal qualification of $\{R_\alpha\}$, by virtue of
Theorem 2.3 and Definition 2.2 in \cite{Mathe2004}, it follows that
for every $x^\ast \in X$, $x^\ast \neq 0$ and $x \in X^\rho$ there
exist positive constants $a\doteq a(x,x^\ast)$ and $d=d(x,x^\ast)$
such that
\begin{equation*}
    \frac{\et(x^\ast, \delta)}{\psi(x,\delta)}\geq a \quad
    \forall\, \delta \in (0,d).
\end{equation*}
Then, $\underset{\delta
\rightarrow0^+}{\limsup}\,\frac{\et(x^\ast,\delta)}{\psi(x,\delta)}>0$
for every $x^\ast \in X$, $x^\ast \neq 0$ and $x \in X^\rho$, that
is, $\psi$ satisfies condition \textit{S1} on $X^\rho.$

Also, since $\psi$ does not depend on $x$, it is invariant over
$X^\rho$, i.e., $\psi$ satisfies condition \textit{S2} of
saturation.

It remains to prove that $\psi$ satisfies condition \textit{S3}.
Suppose not. Then, there exist $M\supsetneqq X^\rho$ and
$\tilde{\psi} \in \mathcal{U}_{M}(\et)$ such that
$\tilde{\psi}\mid _{X ^\rho}=\psi$ and $\tilde{\psi}$ satisfies
\textit{S1} and \textit{S2} over $M$. Let $x^\ast \in M\setminus
X^\rho$, $x^\ast \neq 0$. Since $\tilde{\psi} \in
\mathcal{U}_{M}(\et)$ we have that
\begin{equation}\label{eq:des1} \et
\overset{\{x^\ast\}}{\preceq}\tilde{\psi}.
\end{equation}
Since $\tilde{\psi}$ is invariant over $M$ and $X^\rho \subset M$,
it follows that
$\tilde{\psi}\overset{\{x^\ast\},X^\rho}{\preceq}{\tilde{\psi}}$
and since $\tilde{\psi}$ coincides with $\psi$ on $X^\rho$, it
follows that
$\tilde{\psi}\overset{\{x^\ast\},X^\rho}{\preceq}{\psi}$. This
together with (\ref{eq:des1}) imply that
$\et\overset{\{x^\ast\},X^\rho}{\preceq}{\psi}$ and therefore
$\et(x^\ast,\delta)=O(\rho(\Theta^{-1}(\delta)))$ for $\delta
\rightarrow 0^+.$ Lemma \ref{lem:o-O} then implies that there
exists an \textit{a-priori} admissible parameter choice rule
$\tilde{\alpha}:\R^+\rightarrow (0,\alpha_0)$ such that
\begin{equation*}
\underset{y^\delta \in \overline{B_\delta(Tx^\ast)}}{\sup}
\norm{R_{\tilde{\alpha}(\delta)}
y^\delta-x^\ast}=O(\rho(\Theta^{-1}(\delta))) \quad \textrm{for }
\delta \rightarrow 0^+.
\end{equation*}
Then,
\begin{equation*}
\underset{y^\delta \in \overline{B_\delta(Tx^\ast)}}{\sup}\;
\underset{\alpha \in (0,\alpha_0)}{\inf} \norm{R_{\alpha}
y^\delta-x^\ast}=O(\rho(\Theta^{-1}(\delta))) \quad \textrm{for }
\delta \rightarrow 0^+.
\end{equation*}
Finally, Lemma \ref{lem:sup-inf} implies that $x^\ast \in
\mathcal{R}(\rho(T^\ast T))$ and since $x^\ast \neq 0$, we have
that $x^\ast \in X^\rho$, which contradicts the fact that $x^\ast
\in M\setminus X^\rho$. Hence, $\psi$ satisfies condition
\textit{S3} and therefore, $\psi$ is saturation function of
$\{R_\alpha\}$ on $X^\rho$. \hfill\endproof

\smallskip\smallskip
Note that both  Lemma \ref{lem:r-invertible} and Lemma \ref{lem:o-O}
remain true if hypotheses \textbf{\textit{iii)}} and
\textbf{\textit{iv)}} of Theorem \ref{teo:sat-espectral} are
replaced by the requirement that there exists $\rho:(0,\norm{T}^2]
\rightarrow (0,\infty)$ that is qualification of
$\{R_\alpha\}_{\alpha\in (0,\alpha_0)}$ and satisfies the inequality
in the hypothesis \textbf{\textit{M3}} of Theorem
\ref{teo:sat-calmaxima}.
\medskip
\section{Examples} We close our investigation presenting a few
examples of regularization methods possessing global saturation.
For the sake of brevity we shall not provide much details here.

\textbf{Example 1:} The family of Tikhonov-Phillips regularization
operators $\{R_\alpha\}_{\alpha \in (0,\alpha_0)}$ is defined by
(\ref{eq:Ralfa-galfa}) with $g_\alpha(\lambda)=\frac{1}{\lambda +
\alpha}$. It is well known that this family of regularization
operators possesses classical qualification of order $\mu_0=1$. It
can be easily checked that the family $\{g_\alpha\}_{\alpha \in
(0,\alpha_0)}$ satisfies all hypotheses of the Theorem
\ref{teo:sat-espectral} with constants $C\doteq 1$,
$\lambda_1\doteq\norm{T}^2$, $\gamma_1\doteq\frac{1}{2}$,
$c_1\doteq\frac32$, $\gamma_2\doteq\frac25$, $\gamma\doteq\frac 1
2$ and $c\doteq 1.$ Therefore, the function
$\psi(x,\delta)=\delta^{\frac{2}{3}}$ defined for $x \in X_1\doteq
\mathcal{R}(T^\ast T)\setminus\{0\}$ and $\delta>0$ is global
saturation of $\{R_\alpha\}_{\alpha \in (0,\alpha_0)}$ on $X_1$.
\smallskip

\textbf{Example 2:} Given $k\in \mathbb{R}^+$, for
$\alpha,\lambda>0$ let
$$ h_\alpha^k(\lambda)\doteq \begin{cases}
\frac{e^{-\frac{\lambda}{\sqrt\alpha}}}{\lambda}, &\text{for } 0<\lambda<\alpha,\\
\frac{e^{-\sqrt\frac\lambda\alpha}}{\lambda}, &\text{for } \alpha\le\lambda<3\alpha,\\
\frac{e^{-\sqrt\frac\lambda\alpha}}{\lambda}+\frac{\alpha^k}{\lambda^{k+1}},
&\text{for } \lambda\ge 3\alpha,
\end{cases}
$$
and define
$g_\alpha^k(\lambda)\doteq\frac{1}{\lambda}-\alpha^k\sqrt{\lambda}-h_\alpha^k(\lambda)$
for $\lambda>0$, and for $\lambda=0$ define
$g_\alpha^k(0)\doteq\underset{\lambda\to
0^+}{\lim}g_\alpha^k(\lambda)=\frac{1}{\sqrt{\alpha}}.$ It can be
easily verified that for any $\alpha_0>0$,
$\{g_\alpha\}_{\alpha\in (0,\alpha_0)}$ satisfies the hypotheses
\textit{H1}-\textit{H3} and therefore the corresponding collection
of operators $\{R_\alpha\}_{\alpha\in (0,\alpha_0)}$ defined by
(\ref{eq:Ralfa-galfa}) is a family of spectral regularization
operators for $Tx=y$. Hypothesis \textit{H2} is satisfied with
$C\doteq 1+\norm{T}^3\alpha_0^k$. Also, it can easily be proved
that for any $\lambda>0$, $\displaystyle \frac{\lambda^k|1-\lambda
g_\alpha^k(\lambda)|}{\alpha^k}\;=\;O(1)$ for $\alpha\to 0^+$ and
therefore $\{R_\alpha\}_{\alpha \in (0,\alpha_0)}$ has classical
qualification of order $k.$

Now, for $k\geq 1$ and $\alpha>0$, the function
$g_\alpha^k(\lambda)$ is non-increasing. Thus, hypothesis
\textbf{ii.e)} of Theorem \ref{teo:sat-espectral} holds and
$G_\alpha^k\doteq
\norm{g_\alpha^k(\cdot)}_\infty=g_\alpha^k(0)=\frac{1}{\sqrt{\alpha}}$,
which implies immediately that also hypothesis \textit{H4} is
verified. From now on we shall assume $k\ge 1$.

Defining
$$ s_\alpha^k(\lambda)\doteq \begin{cases}
e^{-\frac{\lambda}{\sqrt\alpha}}, &\text{for } 0\leq\lambda<\alpha,\\
e^{-\sqrt\frac\lambda\alpha}, &\text{for } \alpha\le\lambda<3\alpha,\\
e^{-\sqrt\frac\lambda\alpha}+\left(\frac{\alpha}{\lambda}\right)^{k},
&\text{for } \lambda\ge 3\alpha,
\end{cases}$$
it follows that $r_\alpha^k(\lambda)\,=\,1-\lambda
g_\alpha^k(\lambda)\,=\,\alpha^k\lambda^{\frac 3
2}+s_\alpha^k(\lambda)$. Clearly, $r_\alpha^k(\lambda)>0$ for all
$\lambda\geq0$. Now let $\alpha_0\doteq\min\{\frac1
3,\frac{\norm{T}^2}{3}\}$ and $\lambda_1\doteq
\min\{1,\norm{T}^2\}$. It can be shown that $r_\alpha^k(\lambda)\leq
1$ for all $\lambda \in [0,\lambda_1]$ and for all $\alpha\in
(0,\alpha_0)$, i.e., hypothesis \textbf{\textit{ii.a)}} of Theorem
\ref{teo:sat-espectral} is satisfied.

Also, for $0\leq\lambda<\alpha\leq\lambda_1$ we have that
$$r_\alpha^k(\lambda)=\alpha^k\lambda^{\frac 3 2}+e^{-\frac{\lambda}{\sqrt{\alpha}}} >e^{-1},$$
since $\frac{\lambda}{\sqrt{\alpha}}<1$. Thus, hypothesis
\textbf{\textit{ii.b)}} of Theorem \ref{teo:sat-espectral} is
verified with $\gamma_1\doteq e^{-1}.$ Since
$\abs{r_\alpha^k(\lambda)}=r_\alpha^k(\lambda)$ is monotone
increasing with respect to $\alpha$ for all $\lambda\geq 0$,
hypothesis \textbf{\textit{ii.c)}} of Theorem
\ref{teo:sat-espectral} is also satisfied.

On the other hand we have that
$$\alpha
g_\alpha^k(2\alpha)=\frac{1-e^{-\sqrt{2}}}{2}-\sqrt{2}\,\alpha^{\frac
3 2 +k}\geq \frac{1-e^{-\sqrt{2}}}{2}-\sqrt{2}\,3^{-\frac 3 2 -k},$$
since $\alpha\leq\frac 1 3$. Hence hypothesis
\textbf{\textit{ii.d)}} of Theorem \ref{teo:sat-espectral} holds as
well with constants $c_1\doteq 2$ and $\gamma_2\doteq
\frac{1-e^{-\sqrt{2}}}{2}-\sqrt{2}\,3^{-\frac 3 2 -k}>0$ for all
$k\geq 1$.

Finally, for $\lambda\geq 3\alpha$, $$\left(\frac \lambda \alpha
\right)^k \abs{r_\alpha^k(\lambda)}=\left(\frac \lambda \alpha
\right)^k\left(e^{-\sqrt{\frac \lambda
\alpha}}+\alpha^k\lambda^{\frac 3
2}+\left(\frac{\alpha}{\lambda}\right)^k\right)\geq 1,$$ from which
it follows that hypothesis \textbf{\textit{iv)}} of Theorem
\ref{teo:sat-espectral} is satisfied with constants $c\doteq3$ and
$\gamma\doteq 1$. Hence, Theorem \ref{teo:sat-espectral} allows us
to conclude that the function
$\psi_k(x,\delta)=\delta^{\frac{2k}{2k+1}}$ for $x \in X_k\doteq
\mathcal{R}((T^\ast T)^k)\setminus\{0\}$ and $\delta>0$ is global
saturation of $\{R_\alpha\}_{\alpha \in (0,\alpha_0)}$ on $X_k.$

\smallskip
\textbf{Example 3:} Given $\varepsilon>0$, for $\lambda>0$ and
$\alpha\in (0,\alpha_0)$ with $\alpha_0< e^{-1}$, let
$$ h^\varepsilon(\lambda)\doteq \begin{cases}
\alpha, &\text{for } 0\leq\lambda<\alpha,\\
\alpha^{1+\varepsilon}, &\text{for } \lambda\ge \alpha,
\end{cases}
$$
and define $$g_\alpha^\varepsilon(\lambda)\doteq \frac{1+\ln
\alpha}{\lambda \ln
\alpha-\lambda^{-\varepsilon}h^\varepsilon(\lambda)}.$$ It can be
easily checked that $\{g_\alpha^\varepsilon\}_{\alpha\in
(0,\alpha_0)}$ satisfies the hypotheses \textit{H1-H4}. In
particular, hypothesis \textit{H2} is satisfied with $C\doteq 1$.
Therefore $\{R_\alpha\}_{\alpha\in (0,\alpha_0)}$ with $R_\alpha$
as in (\ref{eq:Ralfa-galfa}) is a family of regularization
operators for $Tx=y$. Also it can be shown that for every $\mu>0$,
$$\frac{\lambda^\mu \abs{1-\lambda g_\alpha^\varepsilon(\lambda)}}{\alpha^\mu}\to +\infty \quad \textrm{for}\quad\alpha\to 0^+\quad\textrm{for every}\quad\lambda>0,$$
which implies that $\{R_\alpha\}_{\alpha \in (0,\alpha_0)}$ has
classical qualification of order $\mu_0=0$. Now, the function
$\rho(\alpha)\doteq -(\ln \alpha)^{-1}$ is strictly increasing and
of local upper type $\beta$ for $\beta\doteq 1$ (moreover the
constant $d$ in Definition \ref{def:local-type} can be taken to be
$d\doteq1$) and it can also be proved that $\rho$ is maximal
qualification of $\{R_\alpha\}_{\alpha\in (0,\alpha_0)}$ and
satisfies the inequality in the hypothesis \textbf{\textit{M3}} of
Theorem \ref{teo:sat-calmaxima} with constants $a\doteq 1$ and
$k\doteq 1$.

In this case we have that
$$r_\alpha(\lambda)=\frac{h^\varepsilon(\lambda)+\lambda^{1+\varepsilon}}{h^\varepsilon(\lambda)-\lambda^{1+\varepsilon}\ln \alpha}.$$
Clearly, $r_\alpha^\varepsilon(\lambda)>0$ for all $\lambda\geq
0$. Also, it can be shown that $r_\alpha^\varepsilon(\lambda)\leq
1$ for all $\lambda \in [0,\lambda_1]$ and for all $\alpha\in
(0,\alpha_0)$, where $\lambda_1\doteq \min\{0.6,\norm{T}^2\}$.
Thus, hypothesis \textit{\textbf{M2\;a)}} of Theorem
\ref{teo:sat-calmaxima} is satisfied.

Now, for $0\leq \lambda<\alpha\leq \lambda_1$, we have that
\begin{equation}\label{eq:uno}
r_\alpha^\varepsilon(\lambda)=\frac{\alpha+\lambda^{1+\varepsilon}}{\alpha-\lambda^{1+\varepsilon}\ln
\alpha}\geq\frac{1}{1-\frac{\lambda^{1+\varepsilon}}{\alpha}\ln
\alpha}>\frac{1}{1-\alpha^\varepsilon\ln \alpha},
\end{equation}
since
$\frac{\lambda^{1+\varepsilon}}{\alpha}<\frac{\alpha^{1+\varepsilon}}{\alpha}=\alpha^\varepsilon$.
Since one can easily prove that
\begin{equation}\label{eq:dos}
-\alpha^{\varepsilon}\ln\alpha \leq (3e)^{-1} \quad\textrm{for
all}\quad \alpha>0,
\end{equation}
it follows from (\ref{eq:uno}) and (\ref{eq:dos}) that
$r_\alpha^\varepsilon(\lambda)>(1+\frac 1 {3e})^{-1}$ for $0\leq
\lambda<\alpha\leq \lambda_1$, which implies that hypothesis
\textit{\textbf{M2\;b)}} of Theorem \ref{teo:sat-calmaxima} holds
with $\gamma_1\doteq (1+\frac 1 {3e})^{-1}.$ Since
$\abs{r_\alpha^\varepsilon(\lambda)}=r_\alpha^\varepsilon(\lambda)$
is monotone increasing with respect to $\alpha$ for all
$\lambda\geq 0$, hypothesis \textbf{\textit{M2\;c)}} of Theorem
\ref{teo:sat-calmaxima} is also satisfied.

On the other hand, for $\varepsilon \in (0,1)$ and
$\alpha\in(0,\alpha_0)$, the function
$g_\alpha^\varepsilon(\lambda)$ is non-increasing for $\lambda \in
[\alpha, \lambda_1]$, which implies that hypothesis
$\textbf{\textit{M2\;e)}}$ of Theorem \ref{teo:sat-calmaxima} is
also satisfied.

Assuming $\varepsilon\in(0,1)$, since $s^\varepsilon(\alpha)\doteq
\frac{1+\ln \alpha}{2\ln\alpha-2^{-\varepsilon}}$ is a
non-increasing function
 for $\alpha\in (0,\alpha_0)$ and $2\alpha \leq
\lambda_1\leq 0.6$, we have that
$$\alpha g_\alpha^\varepsilon(2\alpha)=\frac{1+\ln \alpha}{2\ln\alpha-2^{-\varepsilon}}
\geq \frac{1+\ln 0.3}{2\ln 0.3-2^{-\varepsilon}}.$$ Hence
hypothesis \textit{\textbf{M2\;d)}} of Theorem
\ref{teo:sat-calmaxima} is satisfied with constants $c_1\doteq 2$
and $\gamma_2\doteq s^\varepsilon(0.3).$

Finally, for every $\alpha \in (0,\alpha_0)$ the mapping $\lambda
\rightarrow \abs{r_\alpha(\lambda)}^2$, $\lambda \in
(0,\norm{T}^2]$ is convex and therefore hypothesis
\textbf{\textit{M4}} of Theorem \ref{teo:sat-calmaxima} also
holds. Hence, letting $\Theta(t)\doteq
\sqrt{t}\rho(t)=-\frac{\sqrt{t}}{\ln t}$ for $t \in
(0,\norm{T}^2]$, by Theorem \ref{teo:sat-calmaxima} we conclude
that $\psi(x,\delta)\doteq (\rho \circ \Theta^{-1})(\delta)$ for
$x \in X^\rho\doteq \mathcal{R}(\rho(T^\ast T))\setminus
\{0\}\;=\; \mathcal{R}\left(-(\ln(T^\ast T))^{-1}\right)\setminus
\{0\} $ and $\delta \in (0,\Theta
(\alpha_0))\;=\;\left(0,-\frac{\sqrt{\alpha_0}}{\ln{\alpha_0}}\right)$,
is global saturation function of $\{R_\alpha\}_{\alpha \in
(0,\alpha_0)}$ on $X^\rho$.

\smallskip
\textbf{Example 4:} For $\lambda >0$ and $\alpha\in(0,\alpha_0)$,
let $g_\alpha(\lambda)$ be defined as:
$$ g_\alpha(\lambda)\doteq\begin{cases}0, &\text{for
}0\le\lambda<\alpha,\\
\frac{e^{\frac{\lambda}{\ln\alpha}}}{\lambda}, &\text{for
}\lambda\ge\alpha.
\end{cases}
$$
Thus
$$
r_\alpha(\lambda)= \begin{cases}1, &\text{for }0\le\lambda<\alpha,\\
1-{e^{\frac{\lambda}{\ln\alpha}}}, &\text{for }\lambda\ge\alpha.
\end{cases}
$$
It can be immediately shown that $\{g_\alpha\}_{\alpha\in
(0,\alpha_0)}$ satisfies the hypotheses \textit{H1-H4} and
therefore $\{R_\alpha\}_{\alpha\in (0,\alpha_0)}$ with $R_\alpha$
as in (\ref{eq:Ralfa-galfa}) is a family of regularization
operators for $Tx=y$. Also it can be easily checked that
$\{R_\alpha\}_{\alpha\in (0,\alpha_0)}$ has classical
qualification of order $\mu_0=0$. Furthermore, it can be proved
that the function $\rho(\alpha)$ defined by
$$ \rho(\alpha)\doteq \alpha e^{\frac{\alpha}{\ln\alpha}}
$$
is maximal qualification of $\{R_\alpha\}_{\alpha\in
(0,\alpha_0)}$ and all hypotheses of Theorem 4.9 are satisfied.
Hence, letting $\Theta(t)\doteq
\sqrt{t}\rho(t)=t^{\frac32}e^{\frac{t}{\ln t}}$ for $t \in
(0,\norm{T}^2]$, by Theorem \ref{teo:sat-calmaxima} we conclude
that $\psi(x,\delta)\doteq (\rho \circ \Theta^{-1})(\delta)$ for
$x \in X^\rho\doteq \mathcal{R}(\rho(T^\ast T))\setminus \{0\}$
and $\delta \in (0,\Theta
(\alpha_0))\;=\;\left(0,\alpha_0^{\frac32}e^{\frac{\alpha_0}{\ln(\alpha_0)}}
\right)$, is global saturation function of $\{R_\alpha\}_{\alpha
\in (0,\alpha_0)}$ on $X^\rho$.

\medskip
\section{Conclusions}
In this article we have developed a general theory of global
saturation for arbitrary regularization methods for inverse
ill-posed problems. This concept of saturation formalizes the best
global order of convergence that a method can achieve
independently of the smoothness assumptions on the exact solution
and on the selection of the parameter choice rule. Necessary and
sufficient conditions for a methods to have global saturation have
been provided. It was shown that for a method to have saturation
the total error must be optimal in two senses, namely as optimal
order of convergence over a certain set which at the same time,
must be optimal with respect to the error. We have also proved two
converse results and applied the theory to derive sufficient
conditions for the existence of global saturation for spectral
methods with classical qualification of finite positive order and
for methods with maximal qualification. Finally, examples of
regularization methods possessing global saturation were shown.




\bibliographystyle{amsplain}

\end{document}